\newtheorem{theorem}{Theorem}[section]
\newtheorem{lemma}[theorem]{Lemma}
\newtheorem{proposition}[theorem]{Proposition}
\newtheorem{corollary}[theorem]{Corollary}
\theoremstyle{definition}
\newtheorem{definition}[theorem]{Definition}
\newtheorem{construction}[theorem]{Construction}
\newtheorem{example}[theorem]{Example}
\theoremstyle{remark}
\newtheorem{remark}[theorem]{Remark}
\numberwithin{equation}{section}
\numberwithin{figure}{section}
\newcommand{\bbfamily}{\fontencoding{U}\fontfamily{bbold}\selectfont}
\newcommand{\textbb}[1]{{\bbfamily#1}}
\newcommand {\lfor} {\mbox{\textbb{[}}}
\newcommand {\rfor} {\mbox{\textbb{]}}}
\newcommand{\M} {\mathcal{M}}
\newcommand{\shM} {\mathcal{M}}
\newcommand{\shP} {\mathcal{P}}
\newcommand{\ZZ} {\mathbb{Z}}
\newcommand{\QQ} {\mathbb{Q}}
\newcommand{\RR} {\mathbb{R}}
\newcommand{\CC} {\mathbb{C}}
\renewcommand{\AA} {\mathbb{A}}
\newcommand {\T} {\mathfrak{T}}
\newcommand {\lra} {\longrightarrow}
\newcommand{\bigslant}[2]{{\raisebox{.2em}{$#1$}\left/\raisebox{-.2em}{$#2$}\right.}}
\newcommand\restr[2]{{
  \left.\kern-\nulldelimiterspace 
  #1 
  \vphantom{\big|} 
  \right|_{#2} 
  }}
\newcommand {\ol} {\overline}
\newcommand\shO{\mathcal{O}}
\newcommand\A{\mathbb A}
\newcommand\C{\mathbb C}
\newcommand\LL{\mathbb L}
\newcommand\NN{\mathbb N}
\newcommand\PP{\mathbb P}
\newcommand\R{\mathbb R}
\newcommand\Q{\mathbb Q}
\newcommand\Z{\mathbb Z}
\newcommand\cN{\mathcal N}
\newcommand\cO{\mathcal O}
\newcommand\cX{\mathcal X}
\newcommand\Spec{\operatorname{Spec}\,}
\newcommand\Hom{\operatorname{Hom}}
\newcommand\Ker{\operatorname{Ker}\,}
\newcommand\Coker{\operatorname{Coker}\,}
\newcommand\Rea{\operatorname{Re}\,}
\newcommand\Ima{\operatorname{Im}\,}
\newtheoremstyle{cited}%
  {3pt}
  {3pt}
  {\itshape}
  {}
  {\bfseries}
  {.}
  {.5em}
  {\thmname{#1} \thmnumber{#2} \thmnote{\normalfont#3}}
\theoremstyle{cited}
\begin{document}

\title[Real Log Curves in Toric Varieties]{Real Log Curves in Toric Varieties, Tropical Curves, and Log Welschinger Invariants}
\author{H\"ulya Arg\"uz}
\address{Laboratoire de Math\'ematiques, Universit\'e de
Versailles St Quentin en Yvelines, France}
\email{nuromur-hulya.arguz@uvsq.fr}

\author{Pierrick Bousseau}
\address{Institute for Theoretical Studies\\ ETH Zurich \\ 8092 Zurich\\ Switzerland}
\email{pierrick.bousseau@eth-its.ethz.ch}

\date{\today}

\begin{abstract}
We give a tropical description of the counting of real log curves in toric degenerations of toric varieties. 
We treat the case of genus zero curves and all non-superabundant higher-genus situations. 
The proof relies on log deformation theory and is a real version of the 
Nishinou-Siebert approach to the tropical correspondence theorem for complex curves.
In dimension two, we use similar techniques to study the counting of real log curves with Welschinger signs 
and we obtain a new proof of Mikhalkin's tropical correspondence theorem for Welschinger invariants. 
\end{abstract}
\maketitle

\setcounter{tocdepth}{2}
\tableofcontents

\section{Introduction}
\label{sec:intro}

\subsection{Overview}
Tropical geometry provides a combinatorial approach to
complex and real enumerative geometry. Using a version of Viro's patchworking \cite{V1,V2,V3}, 
Mikhalkin \cite[Theorem 1]{Mi} proved a correspondence theorem 
between counts of complex curves in toric surfaces and counts of tropical curves in $\R^2$. 
In \cite{NS}, Nishinou-Siebert used toric degenerations and log deformation theory to prove 
a correspondence theorem between counts of genus $0$ complex curves in $n$-dimensional toric varieties and counts of tropical curves in $\R^n$. 
Under the assumption of non-superabundacy, this result was generalized by Nishinou \cite{nishinou2009correspondence} to higher-genus complex curves. 
Further works on the tropical correspondence theorem for complex curves in toric varieties include \cite{Tyo1, GroA, Ran, mandel2020descendant}.

In the present paper, we focus on counts of real curves in toric varieties. We start by recalling the main difference between complex and real algebraic geometry. 
In complex geometry, a space of configurations in general position is typically connected, as configurations 
which are not in general position form loci of complex codimension at least one and so of real codimension at least two. 
For example, counts of complex curves matching incidence conditions in general position are independent of the particular choice of general incidence conditions. 
By contrast, in real geometry, a space of configurations in general position is typically disconnected, 
as configurations which are not in general position typically form loci of real codimension one. 
It follows that counts of real curves matching incidence conditions in general positions depend in general of the particular choice of general incidence condition. 

Therefore, a correspondence theorem between counts of tropical curves and counts of real curves in a toric variety matching 
some real incidence conditions can only hold in general for a specific class of incidence conditions, those which are in some sense close to some ``tropical limit". 
In \cite[Theorem 3]{Mi}, Mikhalkin proved such a result for real curves in toric surfaces passing through configurations of real points. 
Our main result, Theorem \ref{main_theorem_intro}, generalizes this result to genus $0$ and higher-genus non-superabundant real curves in higher dimensional toric varieties, 
in complete analogy with how Nishinou-Siebert \cite{NS} generalized Mikhalkin's correspondence theorem \cite[Theorem 1]{Mi} for complex curves. 
More precisely, we consider a toric degeneration of a toric variety and of a set of incidence conditions.
Theorem \ref{main_theorem_intro}
describes how to compute tropically counts of real curves matching the incidence conditions in a fiber of the toric degeneration which is sufficiently close to the central fiber. 
It is in this precise sense that we are restricting ourselves to incidence conditions close to a ``tropical limit".

In \cite[Theorem 6]{Mi}, Mikhalkin also proved a tropical correspondence theorem for real curves in toric surfaces counted with Welschinger signs, 
that is with the sign $(-1)^s$ where $s$ is the number of real elliptic nodes of the curve. 
Using the techniques developed to prove Theorem \ref{main_theorem_intro}, 
we will give a new proof of this result, see Theorem \ref{Thm: log Welschinger equals tropical intro}.
In restriction to the case of rational curves in del Pezzo toric surfaces, counts with Welschinger signs are Welschinger invariants \cite{We,We1} and have the remarkable property of being invariant with respect to deformation of incidence conditions. The tropical correspondence theorem \cite[Theorem 6]{Mi}
has been a central tool in the study of Welschinger invariants
\cite{BM1, BM2, IKS03, IKS04, IKS09}.

\subsection{Main results}
We now state precisely our first main result, Theorem \ref{main_theorem_intro}, referring to the main body of the paper for more details on the objects involved. 

Let $M=\Z^n$ and $M_{\RR}=M \otimes_{\Z} \RR$.
Let $X$ be a $n$-dimensional proper toric variety over $\C$, defined by a complete fan $\Sigma$ in $M_{\RR}$. 
Remark that every toric variety over $\C$ is naturally defined over $\Z$ and so in particular over $\R$.
We fix a tuple 
$(g,\Delta,\mathbf{A},\mathbf{P})$ where:
\begin{itemize}
    \item $g$ is a nonnegative integer.
    \item $\Delta$ is a map $\Delta:M\setminus\{0\}\to \NN$ with support contained in the union of rays of $\Sigma$. 
    The choice of $\Delta$ specifies a degree and tangency conditions along the toric divisors for a curve in $X$, and a tropical degree for a tropical curve in $M_{\RR}$.
    We denote by $|\Delta|$ the number of $v \in M\setminus \{0\}$
    with $\Delta(v) \neq 0$, that is the number of contact points with the toric boundary divisor (union of toric divisors) for a corresponding algebraic curve, 
    or the number of unbounded edges for a corresponding tropical curve.
    \item  $\mathbf{A}=(A_1,\ldots,A_\ell)$ is a tuple of affine linear subspaces $A_j$ of $M_{\Q}=M \otimes_{\Z} \Q$ with codimension $d_j+1$ such that 
    \begin{equation}\label{Eq:vdim}
      \sum_{j=1}^{\ell} d_j=(n-3)(1-g)+|\Delta|\,.
    \end{equation}
    We denote by $L(A_j) \subset M_\Q$ the linear direction of $A_j$.
    \item  $\mathbf{P}=(P_1,\ldots, P_\ell)$ is a tuple of real points in the $n$-dimensional torus of $X$.
\end{itemize}

Let $\T_{g,\ell,\Delta}(\mathbf A)$ be the set of 
$\ell$-marked tropical curves $h \colon \Gamma \rightarrow M_{\RR}$ of genus $g$
and degree $\Delta$ and matching the tropical incidence conditions $\mathbf{A}$.
We assume that we are in a non-superabundant situation, that is that the set $\T_{g,\ell,\Delta}(\mathbf{A})$ 
consists of finitely many tropical curves $h \colon \Gamma \rightarrow M_{\RR}$, all
with $\Gamma$ trivalent, and which are all non-superabundant, that is having a space of deformations of the expected dimension
$(n-3)(1-g)+|\Delta|$. Such non-superabundant condition is automatic if $g=0$ 
by \cite[Proposition 2.4]{NS} or if $n=2$ by
\cite[Proposition 4.11]{Mi}. 

As in \cite{NS}, we construct a toric degeneration $\pi \colon \cX \rightarrow \A^1$ from a polyhedral decomposition of $M_{\RR}$ containing the images of all the tropical curves in $\T_{g,\ell,\Delta}(\mathbf A)$ and all their intersection points with the tropical constraints 
$\mathbf{A}$. For $t \in \A^1 \setminus \{0\}$, we have $X_t \coloneqq \pi^{-1}(t) \simeq X$. 
The central fiber $X_0 \coloneqq \pi^{-1}(0)$ is a union of toric varieties glued along their toric divisors. 
Using $\mathbf{A}$ and $\mathbf{P}$, we construct a family of real incidence conditions $\mathcal{Z}_{A_j,P_j} \subset \mathcal{X}$. 
For every $t \in \A^1$, we denote $Z_{A_j,P_j,t} \coloneqq \mathcal{Z}_{A_j,P_j} \cap X_t \subset X_t$
(see \eqref{Eq:tilde_Z}-\eqref{Eq:tilde_Z_0} for details).

For every $t \in \A^1(\R)\setminus \{0\} \simeq \R^{\times}$, let
$M_{(g,\Delta, \mathbf{A},\mathbf{P}),t}^{\RR-log}$
be the set of genus $g$ real stable map to $X_t$
with degree and tangency conditions along the toric divisors
prescribed by $\Delta$ and matching the incidence conditions $\mathbf{Z}_{\mathbf{A},\mathbf{P},t}$.
We denote 
\[ N_{(g,\Delta, \mathbf{A},\mathbf{P}),t}^{\RR-log}
\coloneqq \sharp M_{(g,\Delta, \mathbf{A},\mathbf{P}),t}^{\RR-log}\,.\]
In our main result, Theorem \ref{main_theorem_intro}, we explain how to tropically compute 
$N_{(g,\Delta, \mathbf{A},\mathbf{P}),t}^{\RR-log}$ for $t$ sufficiently close to $0$.

We define the real count of tropical curves by an explicit formula
\begin{equation}
    \label{Eq: the tropical count intro}
     N_{(g,\Delta,\mathbf{A},\mathbf{P})}^{\R-trop} \coloneqq
\sum_{(\Gamma,\mathbf E,h)\in \T_{g,\ell,\Delta}(\mathbf A)}
w^{\RR}(\Gamma,\mathbf{E})\cdot\mathcal{D}^{\RR}_{\mathcal{T}_h,\sigma} \cdot \prod_{j=1}^\ell \mathcal{D}^{\RR}_{\mathcal{A}_j} \,.
\end{equation}
We refer to the main body of the paper for the definition of the various factors: $w^{\RR}(\Gamma,\mathbf{E})$ is the total real weight defined in \eqref{Eq: Total real weight},  
$\mathcal{D}^{\RR}_{\mathcal{T},\sigma}$ is the twisted real lattice index of the map $\mathcal{T}_h$ 
given by \eqref{eq:NS map}, and $\mathcal{D}^{\RR}_{\mathcal{A}_i}$ is the real lattice index of the inclusion of lattices $\mathcal{A}_i$ defined in 
\eqref{Eq: lattice marked points}, the real lattice index being defined in Definition \ref{Def: real index} 
and the twisted real lattice index being defined in Theorem \ref{Thm: counts of prelog curves}.

\begin{theorem}[=Theorem \ref{main theorem}]\label{main_theorem_intro}
For every $(g,\Delta, \mathbf{A}, \mathbf{P})$ as above, we have 
\[ 
N_{(g,\Delta,\mathbf{A},\mathbf{P}),t}^\text{$\RR$-log}
=N_{(g,\Delta,\mathbf{A},\mathbf{P})}^\text{$\RR$-trop} \]
for all $t \in \A^1(\R) \setminus \{0\} \simeq \R^{\times}$ sufficiently close to $0$.
\end{theorem}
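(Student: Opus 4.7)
The plan is to mirror the Nishinou-Siebert strategy while tracking the real structure at every step. I start from the toric degeneration $\pi \colon \cX \to \A^1$ adapted to the polyhedral decomposition containing all tropical curves in $\T_{g,\ell,\Delta}(\mathbf{A})$ and their intersections with $\mathbf{A}$. Since toric varieties, toric degenerations, and the incidence cycles $\cZ_{A_j,P_j}$ can all be defined over $\R$, complex conjugation acts on $\cX$ and on the moduli space of stable log maps to $\cX / \A^1$ matching the constraints. The real count $N_{(g,\Delta,\mathbf{A},\mathbf{P}),t}^{\R\text{-log}}$ is, by definition, the count of fixed points of this involution on the fiber over $t \in \R^\times$. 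The goal is to compare this count, for $t$ near $0$, to the tropical formula \eqref{Eq: the tropical count intro}.

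First I would invoke the complex log correspondence theorem (Nishinou-Siebert in genus zero, Nishinou in the non-superabundant case) to reduce the problem on nearby fibers $X_t$ to a problem on the central fiber $X_0$: for $t$ sufficiently close to $0$, every stable log map to $X_t$ matching the incidence conditions is a log smoothing of a stable log map to $X_0$ whose tropicalization lies in $\T_{g,\ell,\Delta}(\mathbf{A})$. Fix a tropical curve $(\Gamma, \mathbf{E}, h)$. The contribution of $h$ to the complex count factors as (number of pre-log curves over $X_0$ above $h$ matching the central fiber incidence conditions) times (number of log smoothings of each pre-log curve). Both numbers are indexed by finite abelian groups: lattice quotients associated to the map $\cT_h$ of \eqref{eq:NS map} and to the sublattices $\cA_j$ of \eqref{Eq: lattice marked points}.

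The heart of the proof is to refine this factorization in the presence of the real involution. A complex log object is real if and only if it is fixed by conjugation up to isomorphism; counting such objects reduces, via Galois cohomology, to counting fixed points in torsors over finite $2$-torsion groups, twisted by a class in $H^1(\mathrm{Gal}(\C/\R), \cdot)$ coming from the automorphisms. This is exactly the structure encoded in the real lattice index of Definition \ref{Def: real index} and in the twisted version of Theorem \ref{Thm: counts of prelog curves}. Concretely, for each tropical curve $h$ I would first analyze the real pre-log curves on $X_0$: vertices contribute a real count over each component of $X_0$, edges contribute real choices of gluing parameters, and marked points contribute the real lattice indices $\cD^{\R}_{\cA_j}$; the product of these yields the real weight $w^{\R}(\Gamma, \mathbf{E}) \cdot \prod_j \cD^{\R}_{\cA_j}$. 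Then, for a real pre-log curve, I would count real log smoothings by showing that the obstruction class from $\cT_h$, twisted by the $\sigma$-action induced by the real structure on the log structure, measures precisely the count $\cD^{\R}_{\cT_h, \sigma}$.

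The main obstacle is the last of these three steps: the real structure on a stable log map is not reconstructable from the real structure on the underlying curve alone, because the log structure carries its own sheaf of sign data. The twist $\sigma$ in $\cD^{\R}_{\cT_h, \sigma}$ records exactly how conjugation permutes the set of log smoothings, and one must verify that this $\sigma$ coincides with the combinatorially defined action appearing in the theorem's formula. I expect this to come down to a careful sign computation on the log étale covers associated with each trivalent vertex and each bounded edge of $\Gamma$, together with a dimension count ensuring transversality so that every real log smoothing on $X_0$ deforms uniquely to a real log curve on $X_t$ for $|t|$ small. Once the three contributions are identified with the three factors of \eqref{Eq: the tropical count intro}, summing over $h \in \T_{g,\ell,\Delta}(\mathbf A)$ yields the claimed equality.
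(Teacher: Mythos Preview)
Your overall architecture is right---reduce to the central fiber, count real objects on $X_0$ tropical curve by tropical curve, deform back---but you have the roles of the three factors in \eqref{Eq: the tropical count intro} swapped, and this is not a cosmetic issue.

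In the paper, the map $\mathcal{T}_h$ of \eqref{eq:NS map} has nothing to do with log structures or log smoothings. It is the obstruction map for \emph{gluing real lines} into a maximally degenerate (pre-log) real stable map on $X_0$: one chooses a real line in each toric component (a torsor under $M\otimes_{\Z}\R^\times$ by Lemma~\ref{Lem: torsor}), and $\mathcal{T}_h\otimes\R^\times$ measures whether the lines match along the double locus and hit the constraints. The twist $\sigma$ is not ``induced by the real structure on the log structure''; it is the element $(0\otimes 1,[\mathbf{Z}_{\RR}])$ determined by the positions of the real points $\mathbf{P}$ (Definition~\ref{Def:twisted real index}). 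It detects whether the map $\mathcal{T}_h\otimes\R^\times$, which unlike its complex analogue need not be surjective, actually hits the target specified by $\mathbf{P}$. So $\mathcal{D}^{\RR}_{\mathcal{T}_h,\sigma}$ is the count of real maximally degenerate stable maps (Theorem~\ref{Thm: counts of prelog curves}), and $\prod_j\mathcal{D}^{\RR}_{\mathcal{A}_j}$ then counts, for each such map, the real intersection points with $Z_{A_j,P_j,0}$ available as marked points (Proposition~\ref{Prop: number of intersection points}).

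Conversely, the factor $w^{\RR}(\Gamma,\mathbf{E})$ is precisely the count of \emph{real log lifts} of a fixed real pre-log curve (Theorem~\ref{Thm: lifting md curves to log maps}): at each node corresponding to a bounded edge of weight $\mu$, the log structure is determined by a $\mu$-th root of unity $\zeta$, and requiring the log involution to close up forces $\zeta^2=1$, giving $w^{\RR}(E)\in\{1,2\}$ choices. There is no lattice map or Galois-cohomological twist here; it is a direct local computation in charts. Your plan to attribute $w^{\RR}(\Gamma,\mathbf{E})$ to ``real choices of gluing parameters'' at the pre-log level, and $\mathcal{D}^{\RR}_{\mathcal{T}_h,\sigma}$ to log smoothings, would not go through: the gluing of lines is governed by $(n{-}1)$-dimensional real tori, not by roots of unity, while the log lifts are governed by the parity of edge weights, not by a lattice index. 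Once the factors are correctly assigned, the deformation step (your last paragraph) matches the paper's Theorem~\ref{thm: deformation} and the converse direction uses stable reduction for basic stable log maps.
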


The proof of Theorem \ref{main_theorem_intro} follows the 
lines of the proof of the main result of \cite{NS}. In a first step, given a tropical curve $(\Gamma,\mathbf{E},h)$
in  $\T_{g,\ell,\Delta}(\mathbf A)$, we count how many ways there are to lift $(\Gamma,\mathbf{E},h)$ into a maximally degenerate real stable map to the central fiber $X_0$. In 
\cite{NS}, the count of such maximally degenerate complex stable maps is expressed as the index of a map of lattices 
$\mathcal{T}_h \colon M_1 \rightarrow M_2$. 
The lattice index, that is the cardinality of $\mathrm{Coker}(\mathcal{T}_h)$, is also the cardinality of the kernel of the map of complex tori $\mathcal{T}_h \otimes \C^{\times}
\colon M_1 \otimes_{\Z} \C^{\times} \rightarrow M_2 \otimes_{\Z} \C^{\times}$. 
In the proof of \cite{NS}, the map $\mathcal{T}_h \otimes \C^{\times}$ measures the obstruction to construct 
a maximally degenerate stable map by gluing of its irreducible components living inside the various toric components of $X_0$. 
So the kernel of $\mathcal{T}_h \otimes \C^{\times}$ is exactly in bijection with the relevant complex maximally degenerate stable maps. 
In the real case, we show similarly that the set of relevant maximally degenerate real stable maps can be identified with the kernel of the map of real tori 
$\mathcal{T}_h \otimes \R^{\times}
\colon M_1 \otimes_{\Z} \R^{\times} \rightarrow M_2 \otimes_{\Z} \R^{\times}$. 
By some elementary homological algebra (see Lemma \ref{Lem: kernel}), 
the cardinality of the kernel of  $\mathcal{T}_h \otimes \R^{\times}$ can be computed in terms of a cyclic decomposition of the group $\mathrm{Coker}(\mathcal{T}_h)$. 
Modulo the issue that  $\mathcal{T}_h \otimes \R^{\times}$ is not necessarily surjective (see Theorem \ref{Thm: counts of prelog curves} for details), 
this gives the definition of the factor $\mathcal{D}^{\RR}_{\mathcal{T}_h,\sigma}$ in 
\eqref{Eq: the tropical count intro}.

In a second step, we need to count how many ways there are to lift a maximally degenerate real stable map to $X_0$
into a real stable log map to $X_0$.
This is done in Theorem 
\ref{Thm: lifting md curves to log maps} and this produces the factor $w^{\RR}(\Gamma,\mathbf{E})$ in 
\eqref{Eq: the tropical count intro}. 
Finally, the counting of real stable log maps to the central fiber $X_0$ agrees with the counting of real stable log maps 
on a neighbour fiber $X_t$ by log smooth deformation theory, as in 
\cite{NS}.

We remark that for $n=2$, Mikhalkin 
\cite[Theorem 3]{Mi} gives an alternative recursive description of 
$N_{(g,\Delta,\mathbf{A},\mathbf{P})}^{\RR-trop}$. This is similar to how Mikhalkin
\cite[Theorem 1]{Mi} expresses the complex multiplicity of a tropical curve as a product of local multiplicities attached to vertices, 
whereas the complex multiplicity of 
Nishinou-Siebert \cite{NS} is a priori a globally defined lattice index. 
We also remark that, for $g=0$, a version of Theorem \ref{main_theorem_intro} is proved by Tyomkin in \cite[Section 5.1]{Tyo2}. 
Tyomkin's proof uses explicit rational parametrizations of rational curves in toric varieties and so is fundamentally limited to the $g=0$ case. 
Thus, one can view Theorem \ref{main_theorem_intro} as providing a common generalization of \cite[Theorem 3]{Mi}
and \cite[Section 5.1]{Tyo2}. Finally, we point out that, while we prove Theorem \ref{main_theorem_intro} via a 
real version of the log arguments of Nishinou-Siebert 
\cite{NS}, it should also be possible to write a proof using a real version of the stacky approach to the correspondence theorem due to Tyomkin
in \cite{Tyo1}.


Our second main result, Theorem \ref{Thm: log Welschinger equals tropical intro}, is a tropical correspondence theorem for counts of real curves with 
Welschinger signs in toric surfaces, recovering with a different proof 
\cite[Theorem 6]{Mi} (see also \cite{Sh0}).
We stay in the setup of Theorem \ref{main_theorem_intro}
but specialized to $n=2$ and to the case of $0$-dimensional affine constraints $\mathbf{A}$.

For $t \in \A^1(\R)\setminus \{0\} \simeq \R^{\times}$
general, all the singularities of the image 
$\varphi(C)$ of a real stable map 
$\varphi \colon C \rightarrow X_t$ defining an element of 
$M_{(g,\Delta, \mathbf{A},\mathbf{P}),t}^{\RR-log}$ are nodes, that is ordinary double points.
The Welschinger sign of $\varphi$ is defined by 
\[ \mathcal{W}^{log}(\varphi) \coloneqq (-1)^{m(\varphi)}\]
where $m(\varphi)$ is the number of real elliptic nodes of 
$\varphi(C)$. We define log Welschinger numbers by 
\[ \mathcal{W}^{\RR-log}_{(g,\Delta,\mathbf{A},\mathbf{P}),t} \coloneqq \sum_{ \varphi \in M_{(g,\Delta, \mathbf{A},\mathbf{P}),t}^{\RR-log}}\mathcal{W}^{log}(\varphi)\,.\]
In Theorem \ref{Thm: log Welschinger equals tropical intro}, we explain how to tropically compute 
$\mathcal{W}^{\RR-log}_{(g,\Delta,\mathbf{A},\mathbf{P}),t}$ for $t$ sufficiently close to $0$.

Let $h\colon \Gamma \to M_{\R}$ be a tropical curve defining an element of $\mathcal{T}_{g,\ell,\Delta}(\mathbf{A})$. 
For $V$ a vertex of $\Gamma$, let $\Delta_V \subset N_{\RR}$ denote the dual triangle to $V$: for every edge $E_i$ of 
$\Gamma$ adjacent to $V$, the corresponding side of $\Delta_V$ has integral length equal to the weight 
$w(E_i)$ of $E_i$. We set \[\mathrm{Mult}_\RR(V)\coloneqq (-1)^{I_{\Delta_V}} \,,\] 
where 
$I_{\Delta_V}$ is the number of integral points in the interior of $\Delta_V$.
We define the multiplicity of a tropical curve by 
\begin{equation}
\mathrm{Mult}_{\RR}(h) \coloneqq \begin{cases} 
      0 & \mathrm{if}~\Gamma~\mathrm{contains~a~bounded~edge~of~even~weight} \\
      \prod_V \mathrm{Mult}_{\R}(V)  & \mathrm{else}.
   \end{cases}
\end{equation}
Finally, we define 
\[\mathcal{W}^{\RR-trop}_{(g,\Delta,\mathbf{A},\mathbf{P})} \coloneqq \sum_{(\Gamma,\mathbf{E},h) \in 
\mathcal{T}_{g,\ell,\Delta}(\mathbf{A})} \mathrm{Mult}_\RR(h)\,. \]

\begin{theorem}[=Theorem \ref{Thm: log Welschinger equals tropical}]
\label{Thm: log Welschinger equals tropical intro}
For all $t \in \A^1(\R) \setminus \{0\} 
\simeq \R^{\times}$ sufficiently close to $0$, we have
\begin{equation}
\mathcal{W}^{\RR-log}_{(g,\Delta,\mathbf{A},\mathbf{P}),t}
=
\mathcal{W}^{\RR-trop}_{(g,\Delta,\mathbf{A},\mathbf{P})}\,.
\end{equation}
\end{theorem}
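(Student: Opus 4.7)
The strategy is to refine the proof of Theorem~\ref{main_theorem_intro} so as to track, rather than only count, the real log curves. Theorem~\ref{main_theorem_intro} already expresses every real log curve $\varphi:C\to X_t$ contributing to $\mathcal{W}^{\R-log}_{(g,\Delta,\mathbf{A},\mathbf{P}),t}$ as the output of (i) a choice of tropical curve $(\Gamma,\mathbf{E},h)\in \mathcal{T}_{g,\ell,\Delta}(\mathbf{A})$, (ii) a maximally degenerate real stable map $\varphi_0:C_0\to X_0$ arising from a real gluing parametrized by $\ker(\mathcal{T}_h\otimes \R^{\times})$, and (iii) a real log smoothing. The task is to compute $(-1)^{m(\varphi)}$ in terms of this data, sum over all real lifts of a fixed $(\Gamma,\mathbf{E},h)$, and identify the result with $\mathrm{Mult}_{\R}(h)$.

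First I would localize the nodes of $\varphi(C)$. For $t$ close to $0$, every node of $\varphi(C)$ specializes either to a node of an irreducible component $\varphi_V(C_V)\subset X_V$ of $\varphi_0$ (a \emph{vertex node}), or to a node of $X_0$ traversed by $\varphi_0$, which corresponds to a bounded edge $E$ of $\Gamma$ (an \emph{edge node}). By the toric genus formula, a generic rational real toric curve in $X_V$ with Newton polygon $\Delta_V$ has exactly $I_{\Delta_V}$ vertex nodes; the edge nodes arise from the local model $xy=t^{w(E)}$ of the total space of the degeneration at the node of $X_0$ corresponding to $E$, together with the $w(E)$-fold branch of $\varphi_0$.

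Next I would evaluate the Welschinger contribution of each type of node. For a bounded edge $E$ of odd weight $w(E)$, the real smoothing equation admits a unique solution and a direct local analysis shows the resulting nodes are real hyperbolic, contributing $+1$ to the Welschinger sign. For a bounded edge $E$ of even weight, the map $z\mapsto z^{w(E)}$ from $\R^{\times}$ to $\R^{\times}$ misses the negative numbers, and via Lemma~\ref{Lem: kernel} applied to the cyclic decomposition of $\mathrm{Coker}(\mathcal{T}_h)$ the real lifts either fail to exist or appear in pairs with opposite Welschinger signs; in either case the contribution of such a tropical curve vanishes, matching $\mathrm{Mult}_{\R}(h)=0$. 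For each trivalent vertex $V$, the key assertion is that the signed sum over real lifts of $(-1)^{\#\{\text{real elliptic vertex nodes at }V\}}$ equals $(-1)^{I_{\Delta_V}}$. I would prove this by a local tropical patchworking argument inside $X_V$: further degenerating $\Delta_V$ into unit triangles reduces the problem to the Welschinger sign of a single generic real rational toric curve, where the computation is explicit.

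Assembling these three computations, for any $(\Gamma,\mathbf{E},h)\in \mathcal{T}_{g,\ell,\Delta}(\mathbf{A})$ without even-weight bounded edges one obtains
\[
\sum_{\varphi\text{ lifting }(\Gamma,\mathbf{E},h)} (-1)^{m(\varphi)} \;=\; \prod_V (-1)^{I_{\Delta_V}} \;=\; \mathrm{Mult}_{\R}(h),
\]
and a zero contribution in the even-weight case; summing over $\mathcal{T}_{g,\ell,\Delta}(\mathbf{A})$ yields the theorem. The main obstacle I expect is the vertex step: verifying that, once summed over the real gluing freedom already encoded by $\mathcal{D}^{\R}_{\mathcal{T}_h,\sigma}$ and $w^{\R}(\Gamma,\mathbf{E})$, the parity of real elliptic vertex nodes at $V$ is indeed $I_{\Delta_V}$. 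This is a genuinely real-algebraic statement about toric curves in $X_V$ and is not implied by the arithmetic of $\mathrm{Coker}(\mathcal{T}_h)$ alone; it requires a further degeneration argument within each toric component, or equivalently a direct analysis of real rational toric curves with prescribed real tangencies along the toric boundary of $X_V$.
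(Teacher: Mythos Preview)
Your overall strategy---localize the nodes of $\varphi_t(C_t)$ as edge nodes or vertex nodes and analyze each type---matches the paper's approach. However, several of your local claims are either wrong or attributed to the wrong mechanism.

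For bounded edges of odd weight $\mu=w(E)$, you assert the $\mu-1$ edge nodes are real hyperbolic. This is false: the paper's Theorem~\ref{thm_signs_edges}, proved by an explicit parametrization after the blow-up reduction of Lemma~\ref{lem:blow_up}, shows they are all \emph{elliptic}. You reach the correct sign $+1$ only by accident, because $\mu-1$ is even.

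For bounded edges of even weight, the cancellation does not come from the arithmetic of $\mathrm{Coker}(\mathcal{T}_h)$ or from surjectivity of $z\mapsto z^{w(E)}$ on $\R^\times$. The pair of real lifts arises at the log-lifting stage (Theorem~\ref{Thm: lifting md curves to log maps}): there are two real $\mu$-th roots of unity $\zeta=\pm 1$ specifying the log chart at the node. Theorem~\ref{thm_signs_edges} then computes that one choice yields $\mu-1$ elliptic nodes while the other yields one hyperbolic node and $\tfrac{\mu-2}{2}$ complex-conjugate pairs; this is what makes the two contributions differ by $(-1)^{\mu-1}=-1$. Your sketch does not supply this node-type computation, and ``real lifts fail to exist'' is not a case that occurs here.

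For the vertex contribution, you omit the case where two distinct components of $C_0$ map into the same toric piece $X_V$ (this happens at $4$-valent image vertices in dimension two); Lemma~\ref{lem:4-valent} shows these nodes are hyperbolic. For self-intersections of a single component $C_V$, the paper does not degenerate $\Delta_V$ further or invoke patchworking: Proposition~\ref{Prop: all interior points give elliptic nodes} proves directly, from the explicit parametrization of a real line, that every real node of $\varphi_0(C_V)$ is elliptic. Since the total number of nodes of $\varphi_0(C_V)$ is $I_{\Delta_V}$ and non-real nodes come in conjugate pairs, the parity of elliptic nodes equals $I_{\Delta_V}\bmod 2$, giving the factor $(-1)^{I_{\Delta_V}}$ with no further work. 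The obstacle you flag as ``main'' is thus resolved by a direct real-algebraic computation rather than an inductive degeneration.
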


Theorem \ref{Thm: log Welschinger equals tropical intro} recovers \cite[Theorem 6]{Mi}, 
proved by Mikhalkin using degeneration of the complex structure and a version of Viro's patchworking. 
A purely algebraic proof is also given by Shustin in \cite{Sh0} (see also the exposition given in \cite{IMS}). 
Our proof of Theorem 
\ref{Thm: log Welschinger equals tropical intro} relies on the log geometric framework 
used in the proof of Theorem \ref{main_theorem_intro}, and as such differs from the proofs given by \cite{Mi} and \cite{Sh0}. 
Nevertheless, we will use one of the key idea of \cite{Sh0}, described in \cite{Sh0} as a refinement of the tropicalization by a shift operation, and reinterpreted in our language as a non-toric blow-up of the central fiber $X_0$ (see also \cite{ShT1, ShT2}). In deforming a real stable log map
$\varphi_0 \colon C_0 \rightarrow X_0$ to the central fiber $X_0$ into a real stable log map $\varphi \colon C \rightarrow X_t$, the nodes of $\varphi_0(C_0)$ away
from the double locus of $X_0$ deform into locally isomorphic nodes of $\varphi(C)$, 
but the nodes of $\varphi_0(C_0)$ on the double locus of $X_0$ deform in general into several nodes of $\varphi(C)$, see Figure \ref{Fig: Nodes}. 
We perform a non-toric blow-up of $X_0$ in order to study the real nature of these nodes.

\begin{figure}
\center{\scalebox{1.0}{\input{Nodes.pspdftex}}}
\caption{A maximally degenerate real log curve $\varphi_0 \colon C_0 \to X_0$ and its 
deformation $\varphi \colon C \to X$ with some new nodes generated in the image.}
\label{Fig: Nodes}
\end{figure}

For $X$ a toric del Pezzo surface and $d \in H_2(X,\Z)$ such that $c_1(X) \cdot d -1 >0$, let 
$\Delta_d \colon M \setminus \{0\} \rightarrow \NN$ be the 
tropical degree defined by $\Delta_d(v)=d \cdot D_v$ 
if $v$ is the primitive generator of the ray of the fan of $X$
corresponding to the toric divisor $D_v$, and 
$\Delta_d(v)=0$ else. Then, for $g=0$ and 
$\Delta=\Delta_d$, the log Welschinger number $\mathcal{W}^{\R-log}_{(0,\Delta_d,\mathbf{A},\mathbf{P}),t}$
agrees with the Welschinger invariant defined symplectically in 
\cite{We,We1}, see Corollary \ref{Cor:W_tropical}. 
In particular, it follows from \cite{We,We1} that $\mathcal{W}^{\R-log}_{(0,\Delta_d,\mathbf{A},\mathbf{P}),t}$ is independent of $t$ in this case. 

\subsection{Future directions}
In \cite{NS}, Nishinou-Siebert stressed in the complex case the robustness of the log geometric framework. 
This has been remarkably confirmed by the development of 
log Gromov-Witten theory \cite{AbramovichChen, GSlogGW}.
In the present paper, we make a first step in the study of real stable log maps.
We mention briefly possible future directions to test the robustness of 
the real log geometric framework.

We are considering ``purely real" constraints: our incidence conditions 
 are isomorphic to toric varieties with their standard 
real structures. It would be interesting to study pairs of complex conjugated constraints. For real toric del Pezzo surfaces, Shustin \cite{Sh}
(see also \cite{GMS}) has given a tropical description of Welschinger invariants for real curves passing through arbitrary real configuration of points 
(real and complex conjugated pairs).

One should be able to apply degeneration techniques to non-toric situations.
A natural plan is to start with the study of non-toric log Calabi-Yau surfaces: 
we expect the formulation of a real version of the tropical vertex of Gross-Pandharipande-Siebert \cite{GPS}, compatible with its $q$-deformed version 
\cite{FS, bousseau2018quantum}, and with possible applications to a real version of mirror symmetry and to real Gromov-Witten invariants 
(see for example 
\cite[Conjecture 6.2.1]{bousseau2019proof}).

Finally, a natural question is the tropical interpretation of higher-dimensional versions of Welschinger invariants. 
Welschinger \cite{We2, Wespinor} defined 3-dimensional invariants, later generalized by Solomon \cite{So, ST} in the setting of open Gromov-Witten theory.
For $\PP^3$, an interpretation in terms of floor diagrams appears in 
\cite{BM1}. We also refer to Georgieva--Zinger for positive-genus analogues of Welschinger’s invariants for several real symplectic manifolds, such as the odd-dimensional projective spaces and the quintic threefold \cite{GZ,GZ2,GZ3}. It is natural to ask if real log geometric techniques can shed some light on these invariants in a more general setup. 
This will be the focus of future work.

\subsection{Plan of the paper}
In \S\ref{Sec: toric and tropical}, we review the construction of toric degenerations and basic facts on tropical curves in $\R^n$.
In \S\ref{Sec:real_stable_log}, we introduce the notion of real stable log map.
In \S\ref{Sec. counts of md curves}, 
we prove Theorem \ref{Thm: counts of prelog curves} counting ways to lift a tropical curve to a maximally degenerate real stable map to the central fiber $X_0$.
In \S\ref{Sec: from real md to real log}, we prove Theorem 
\ref{Thm: lifting md curves to log maps} counting ways to lift a 
maximally degenerate real stable to $X_0$
to a real stable log map. In \S\ref{sec:defo}, we use log deformation theory 
to study deformation of a real stable log map to $X_0$
in a real stable log map to a neighbour fiber $X_t$.
In \S\ref{sec:The Main Theorem}, we prove our main result, 
Theorem \ref{main theorem} (=Theorem \ref{main_theorem_intro})
by combining the results proved in 
\S\ref{Sec. counts of md curves}-\S\ref{Sec: from real md to real log}-\S\ref{sec:defo}.
In \S\ref{Sec: tropical W signs}, we discuss the counting of tropical curves
in $\R^2$ with 
Welschinger signs. 
In \S\ref{Sec: Log W signs}, we prove Theorem \ref{Thm: log Welschinger equals tropical} (=Theorem \ref{Thm: log Welschinger equals tropical intro})
computing tropically the counts of real stable log maps in toric surfaces with 
Welschinger signs. We conclude \S\ref{Sec: Log W signs} by a discussion of the relation with the symplectically defined Welschinger invariants.

\subsection{Acknowledgements}
We thank Mark Gross, Bernd Siebert, Eugenii Shustin, Ilya Tyomkin, Tom Coates, Ilia Itenberg and Dimitri Zvonkine for useful discussions. 
A major part of this paper was written during our visit to Kyoto University for the 5'th KTGU Mathematics Workshop for Young Researchers. 
We thank Hiroshi Iritani and all the other organisers for their hospitality.
Finally, we thank the anonymous referee for their careful reading and the many suggestions to improve the exposition.

During the preparation of this paper the first author has received funding
from the European Research Council (ERC) under the European Union’s Horizon 2020
research and innovation programme (grant agreement No. 682603), and from Fondation Math\'ematique Jacques Hadamard. 
The second author acknowledges the support of Dr. Max R\"ossler, the Walter Haefner Foundation and the ETH Zurich Foundation. 

\section{Tropical curves and toric degenerations}
\label{Sec: toric and tropical}

In this section, we review basic facts about tropical curves and toric degenerations, mainly following \cite[\S 1-3]{NS}.

\subsection{Tropical curves in $\R^n$}
\label{sec: Real Tropical Curves}

Throughout this paper we fix the lattice $M=\ZZ^n$ and we denote by
$M_{\RR}=M\otimes_{\ZZ}\RR$ 
the associated $n$-dimensional real vector space. 

Let $\overline{\Gamma}$ be a weighted, connected finite graph without divalent vertices. 
Denote the set of vertices and
edges of $\overline{\Gamma}$ by $\overline{\Gamma}^{[0]}$ and $\overline{\Gamma}^{[1]}$
respectively, and let
$$w_{\overline{\Gamma}}:\overline{\Gamma}^{[1]}
\rightarrow \NN\setminus \{0\}$$ be the weight function. 
Denote the set of adjacent vertices to an edge
$E\in \overline{\Gamma}^{[1]}$ by $\partial E=\{V_1,V_2\}$. 
The set of 1-valent vertices is denoted by $\overline{\Gamma}_{\infty}^{[0]}\subseteq \overline{\Gamma}^{[0]}$. We set
\[
\Gamma=\overline{\Gamma}\setminus\overline{\Gamma}_{\infty}^{[0]}
\]
We denote the set of vertices and edges of $\Gamma$ as $\Gamma^{[0]}$,
$\Gamma^{[1]}$, and let $$w_{\Gamma}:\Gamma^{[1]}
\rightarrow \NN\setminus\{0\}$$
be the restriction of the weight function to $\Gamma^{[1]}$. We call the non-compact edges of $\Gamma$ \emph{unbounded edges} 
and denote the set of unbounded edges by $\Gamma_{\infty}^{[1]}\subseteq
\Gamma^{[1]}$. We call the compact edges of $\Gamma$ \emph{bounded edges}. 
The set of bounded edges of $\Gamma$ is $\Gamma^{[1]} \setminus \Gamma^{[1]}_{\infty}$.

\begin{definition}
\label{tropcurve}
A \emph{parameterized tropical curve} in $M_{\RR}$ is a proper map
$h \colon \Gamma\rightarrow M_{\RR}$ satisfying the following conditions.
\begin{enumerate}
\item For every edge $E\subseteq\Gamma^{[1]}$, the restriction $h|_E$ is
an embedding with image $h(E)$ contained in an affine line with rational
slope.
\item For every vertex $V\in\Gamma^{[0]}$, the following \emph{balancing
condition} holds. Let $$E_1,\ldots,E_m\in \Gamma^{[1]}$$
 be the edges
adjacent to $V$, and let $m_i\in M$ be the primitive integral vector
emanating from $h(V)$ in the direction of $h(E_i)$. Then
\begin{equation}\label{Eq:balancing_condition}
\sum_{j=1}^m w_\Gamma(E_j)m_j=0.
\end{equation}
\end{enumerate}
An \emph{isomorphism} of parametrized
tropical curves $h \colon \Gamma\rightarrow N_{\RR}$
and $h' \colon \Gamma'\rightarrow M_{\RR}$ is a homeomorphism $\Phi:\Gamma
\rightarrow\Gamma'$ respecting the weights of the edges and such that
$h=h'\circ\Phi$. A \emph{tropical curve} is an isomorphism class
of parameterized tropical curves. The \emph{genus} of a tropical curve
$h \colon \Gamma\rightarrow M_{\RR}$ is the first Betti number of $\Gamma$.
\end{definition}

Let 
\[F(\Gamma)=\{(V,E)\,|\, E\in \Gamma^{[1]}~\mathrm{and}~V\in\partial E\}\] 
be the set of \emph{flags} of $\Gamma$. Let
\begin{align*}
u \colon F(\Gamma)~ & \lra M \\
F(\Gamma) \ni (V,E)  & \longmapsto  u_{V,E}
\end{align*}
be the map sending a flag
$(V,E)$ to the primitive integral vector $u_{(V,E)} \in M$ emanating from $h(V)$ in the direction of $h(E)$.

\begin{definition}
\label{Daef: type of a tropical curve}
The \emph{type of a tropical curve} $h \colon \Gamma \to M_\RR$ is the pair $(\Gamma, u).$
\end{definition}

\begin{definition}
\label{Def: degree}
The \emph{degree} of a type $(\Gamma,u)$ of tropical curves is the map 
\[ \Delta(\Gamma,u) \colon M\setminus \{0\}\to \NN\]
defined by
\[
\Delta(\Gamma,u)(v)
\coloneqq \sharp\big\{ (V,E)\in F(\Gamma)
\,\big|\, E \in \Gamma_\infty^{[1]}\,, \, w(E)\cdot u_{(V,E)}=v\big\}.
\]
The degree of
a tropical curve is the degree of its type. 
\end{definition}

\begin{figure} 
\center{\scalebox{1.0}{\input{TwoConics.pspdftex}}}
\caption{Two tropical curves $h_1: \Gamma \to \RR^2$ and $h_2: \Gamma \to \RR^2$ having different types but the same degree.}
\label{Fig: TwoConics}
\end{figure}

For every $\Delta \colon M \setminus \{0\} \rightarrow \NN$ with finite support, 
we define
\begin{equation}
\label{Eq: cardinality Delta}    
    |\Delta| \coloneqq \sum_{v\in M\setminus\{0\}} \Delta(v).
\end{equation}
Note that a tropical curve of
degree $\Delta$ has $|\Delta|$ unbounded edges.

\begin{definition}
\label{Def:l-marked}
An \emph{$\ell$-marked tropical curve}
$(\Gamma, \mathbf{E}, h)$ consists of a tropical curve 
$h \colon \Gamma \rightarrow M_{\R}$ together with a choice of 
$\ell$ edges
\[ \mathbf{E} = (E_1, .\ldots , E_{\ell}) \subset { \big( 
\Gamma^{[1]}\big)}^{\ell} \,.\]
\end{definition}

For a given type $(\Gamma, u)$ of degree $\Delta$, we denote by $\T_{(\Gamma,u)}$ the set of
isomorphism classes of marked tropical curves of type $(\Gamma,u)$. If $\Gamma$ trivalent of genus $g$, 
then, by \cite[Proposition 2.13]{Mi} (see also
\cite[Proposition 1.17]{Gross}), $\T_{(\Gamma,u)}$ is an open convex polyhedral domain in a real affine space of dimension 
greater or equal to
$(n-3)(1-g)+|\Delta|$.

\begin{example}
For $(\Gamma,u)$ the type of either one of the two tropical curves in Figure \ref{Fig: TwoConics}, we have 
$\T_{(\Gamma,u)}=\R^2 \times (\R_{>0})^3$, where the first factor $\R^2$ is the position of one of the vertices and the three factors $\R_{>0}$ are the lengths of the three bounded edges. Note that in this example, we have $n=2$, 
$g=0$, $|\Delta|=6$, so $(n-3)(1-g)+|\Delta|=5$.
\end{example}

\begin{definition}
\label{Def:non-superabundant}
Let $(\Gamma,u)$ be a type of tropical curves of degree $\Delta$ with 
$\Gamma$ trivalent and of genus $g$. We say that $(\Gamma,u)$ is \emph{non-superabundant} if 
\[ \dim \T_{(\Gamma,u)}=(n-3)(1-g)+|\Delta| \,.\]
A (marked) tropical curve is non-superabundant if its type is non-superabundant.
\end{definition}

\begin{example}
A tropical curve $h: \Gamma \to M_{\RR}$ is superabundant if some of the cycles of $\Gamma$ map into smaller-dimensional affine-linear subspaces of $M_{\RR}$, as illustrated in Figure \ref{Fig: Nonsuper}. For a more comprehensive discussion on super-abundancy we refer to \cite[\S 2.6]{Mi}.

\begin{figure} 
\center\scalebox{0.8}{\input{Nonsuper.pspdftex}}
\caption{Image of a non-superabundant tropical curve $h: \Gamma \to \RR^3$.}
\label{Fig: Nonsuper}
\end{figure}
\end{example}

\begin{definition}
\label{Def: constraints}
An \emph{affine
constraint} is an $\ell$-tuple
$\mathbf{A}=(A_1,\ldots, A_\ell)$ of affine subspaces $A_j\subset M_\QQ$. 
An $\ell$-marked tropical curve
$(\Gamma,\mathbf{E}, h)$ \emph{matches} the affine constraint
$\mathbf{A}$ if 
\[  h(E_j)\cap A_j \neq \emptyset \quad \text{for all} \quad j=1,\ldots, \ell\,. \]
\end{definition}

\begin{example}
In Figure \ref{Fig: Marked curve} we illustrate a marked tropical curve 
$(\Gamma,\mathbf{E},h)$, with $\mathbf{E}=(E_1,\ldots,E_5)$, matching an affine constraint given by $5$ points $(p_1,\ldots,p_5)$. 

\begin{figure} 
\center{\input{conic.pspdftex}}
\caption{A marked tropical curve matching $5$ points in $\RR^2$.}
\label{Fig: Marked curve}
\end{figure}
\end{example}

\begin{definition}
\label{Def: tropical spaces}
For $g \in \NN$ and $\Delta \colon M \setminus \{0\} \rightarrow \NN$, 
the set of $\ell$-marked tropical curves of genus $g$ and degree
$\Delta$
is denoted $\T_{g,\ell,\Delta}$.
We denote by $\T_{g,\ell,\Delta}(\mathbf{A})$
the subset of tropical curves matching an affine constraint 
$\mathbf{A}$.
\end{definition}

\begin{definition}
\label{Def: general affine}
Let $g \in \NN$ and $\Delta \colon M \setminus \{0\} \rightarrow \NN$ with 
finite support. 
An affine constraint $\mathbf{A}=(A_1,\ldots,A_{\ell})$ is \emph{general for
$(g,\Delta)$} if:
\begin{itemize}
\item writing $\mathrm{codim} A_j=d_j+1$, we have 
\[ \sum_{j=1}^{\ell} d_j=(n-3) (1-g) +|\Delta|\,; \]
\item no translation of $M_{\RR}$ preserves $\bigcup_{j=1}^{\ell} A_j$, that is, there does not exist 
$v \in M_\RR$ such that $\bigcup_{j=1}^{\ell} A_j + \R v 
\subset \bigcup_{j=1}^{\ell} A_j$;
\item for any $\ell$-marked tropical curve 
$(\Gamma, \mathbf{E},h)$ of genus $g$, degree $\Delta$, and matching 
$\mathbf{A}$,
the following hold:
\begin{itemize}
\item  $\Gamma$ is trivalent;
\item $(\Gamma, \mathbf{E},h)$ is non-superabundant;
\item  $h(\Gamma^{[0]}) \cap \bigcup_{j=1}^{\ell} A_j=\emptyset$;
\item  $h$ is injective for $n > 2$. For $n=2$, it is at least injective on the subset of
vertices, and all fibers are finite.
\end{itemize}
\end{itemize}
\end{definition}

\begin{proposition}(\cite[Proposition 4.11]{Mi},\cite[Proposition 2.4]{NS})
Let $g \in \NN$, $\Delta \colon M \setminus \{0\} \rightarrow \NN$ with 
finite support, and $\mathbf{A}=(A_1,\ldots,A_{\ell})$ an affine constraint such that, writing $\mathrm{codim} A_j=d_j+1$, 
we have 
\[ \sum_{j=1}^{\ell} d_j=(n-3) (1-g) +|\Delta|\,. \]
Denote by $\mathfrak{A} \coloneqq \prod_{j=1}^{\ell} M_{\Q}/L(A_j)$ the
space of affine constraints that are parallel to $\mathbf{A}$, where $L(A_j) \subset M_\Q$ is the linear subspace associated to the affine subspace $A_j$. 
Assume that $n=2$ or $g=0$.
Then the subset
\[ \mathfrak{Z} \coloneqq \{ \mathbf{A}' \in \mathfrak{A}\,|\,
\mathbf{A}' \,\text{is nongeneral for} \,(g,\Delta) 	\}\]
of $\mathfrak{A}$ is nowhere dense.
\end{proposition}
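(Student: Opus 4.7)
The plan is to write $\mathfrak{Z}$ as a finite union of images of polyhedral sets of positive codimension in $\mathfrak{A}$ and conclude by a dimension count. The key tool is the tropical evaluation map associated to each combinatorial type.

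First I would enumerate. Since $\Delta$ has finite support and $g$ is fixed, there are only finitely many combinatorial types $(\Gamma,u)$ of tropical curves of genus $g$ and degree $\Delta$: the graph $\Gamma$ has $|\Delta|$ unbounded edges and at most $|\Delta|+3g-3$ bounded edges (with equality when $\Gamma$ is trivalent), and at each vertex the primitive edge directions $u$ are constrained by the balancing condition \eqref{Eq:balancing_condition} together with the prescribed degree. For each such type, the moduli space $\T_{(\Gamma,u)}$ is a (relatively open) polyhedral cone whose points are determined by the position of a chosen base vertex and the lengths of the bounded edges, so
\[
\dim \T_{(\Gamma,u)} \;\leq\; n+\#\{\text{bounded edges of }\Gamma\}-g\,,
\]
with equality precisely when $(\Gamma,u)$ is non-superabundant. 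For a non-trivalent $\Gamma$ of genus $g$ and degree $\Delta$, the right-hand side is strictly smaller than $(n-3)(1-g)+|\Delta|+n$.

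Second, for each type $(\Gamma,u)$ and each tuple of marked edges $\mathbf{E}=(E_1,\ldots,E_\ell)$, I would consider the evaluation map
\[
\mathrm{ev}_{(\Gamma,u),\mathbf{E}} \colon \T_{(\Gamma,u)}\times \RR^{\ell}\lra \mathfrak{A}\,,\qquad
(h,s_1,\ldots,s_\ell)\longmapsto \bigl([h(V_j)+s_j u_{(V_j,E_j)}] \bmod L(A_j)\bigr)_{j=1}^{\ell}\,,
\]
where $V_j$ is a fixed endpoint of $E_j$. A marked tropical curve of type $(\Gamma,u)$ matches an affine constraint $\mathbf{A}'$ parallel to $\mathbf{A}$ if and only if $\mathbf{A}'$ lies in the image of this map. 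The source has dimension $\dim\T_{(\Gamma,u)}+\ell$ and the target has dimension $\sum_{j=1}^{\ell}(d_j+1)=(n-3)(1-g)+|\Delta|+\ell$, so these match exactly when $(\Gamma,u)$ is both trivalent and non-superabundant. For every other type, the image is automatically contained in a closed subset of positive codimension in $\mathfrak{A}$.

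Third, for the remaining bad conditions of Definition \ref{Def: general affine}, I would show that the corresponding locus in $\T_{(\Gamma,u)}\times\RR^{\ell}$ already has positive codimension, hence so does its image: the condition that some $h(V)$ with $V\in\Gamma^{[0]}$ lies on $A_j$ is cut out by a nontrivial affine equation in the edge-length coordinates; the non-injectivity conditions (for $n>2$, two vertices mapping to the same point or two edges with intersecting images beyond a shared vertex; for $n=2$, the weaker failures) are each cut out by nontrivial linear conditions because for a non-superabundant trivalent type generic vertices have distinct images; and the translation-invariance condition on $\bigcup_{j=1}^{\ell} A_j$ is a closed linear condition on $\mathfrak{A}$ itself, and is excluded once $\ell\geq 1$ and the $L(A_j)$ do not jointly span a translation subgroup. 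Taking the finite union over types and marked-edge choices, $\mathfrak{Z}$ is covered by finitely many nowhere dense subsets of $\mathfrak{A}$, hence is nowhere dense.

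The main obstacle is the superabundance clause: a priori, a superabundant type $(\Gamma,u)$ has $\dim\T_{(\Gamma,u)}$ larger than the expected dimension, and its evaluation map could in principle surject onto $\mathfrak{A}$, making the superabundant locus potentially dense. This is precisely where the hypothesis is used: under $n=2$ (by \cite[Proposition 4.11]{Mi}) or $g=0$ (by \cite[Proposition 2.4]{NS}), no trivalent type is superabundant, so only non-trivalent types remain and the dimension comparison in the second step closes the argument.
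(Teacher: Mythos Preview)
The paper's own ``proof'' is simply a pointer to the two cited references: for $n=2$ this is \cite[Proposition~4.11]{Mi}, and for $g=0$ this is \cite[Proposition~2.4]{NS}. Your proposal goes further and sketches the actual mechanism behind those results---finitely many combinatorial types, an affine evaluation map for each, and a dimension count---which is indeed how the cited proofs work. So in spirit you are not doing something different from the paper; you are unpacking what the paper defers to the literature.

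Two points to fix. First, the displayed inequality $\dim \T_{(\Gamma,u)} \le n + E_b - g$ is not correct: the cycle closure conditions live in $M_\RR\simeq\RR^n$, so each independent cycle imposes $n$ scalar equations, and the right bound is $n+E_b$ with equality replaced by $n+E_b-\operatorname{rank}(\text{cycle map})$, the non-superabundant case being rank $=ng$. With the corrected count one recovers $(n-3)(1-g)+|\Delta|$ for trivalent non-superabundant types. Second, your treatment of non-trivalent types implicitly assumes a dimension drop, but the crude bound $n+E_b$ is not small enough when $n=2$ and $g\ge 1$; one really needs that every non-trivalent type sits in the boundary of a trivalent one and hence has strictly smaller dimension once the trivalent types are known to be non-superabundant. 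That stratification argument is standard and is exactly what the hypothesis $n=2$ or $g=0$ buys via the cited references---so your final paragraph correctly locates where the assumption enters, but the earlier steps should be phrased to rely on it too.
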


\begin{proof}
For $n=2$, this is Proposition 4.11 of \cite{Mi}. For $g=0$, this is 
Proposition 2.4 of \cite{NS}.
\end{proof}

\begin{proposition}\label{prop_finite_tropical}
Let $g \in \NN$, $\Delta \colon M \setminus \{0\} \rightarrow \NN$
with finite support, and $\mathbf{A}$ an affine constraint general for 
$(g,\Delta)$. Then the set 
$\T_{g,\ell,\Delta}(\mathbf{A})$ of $\ell$-marked tropical curves of 
genus $g$ and degree $\Delta$ matching $\mathbf{A}$ is finite.
\end{proposition}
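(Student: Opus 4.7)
The plan is to combine a combinatorial finiteness for the underlying marked weighted graphs with an affine transversality argument within each stratum of fixed type. By the genericity assumption on $\mathbf{A}$, any $(\Gamma, \mathbf{E}, h) \in \T_{g,\ell,\Delta}(\mathbf{A})$ has $\Gamma$ trivalent and is non-superabundant, so it suffices to bound the set of types $(\Gamma, u)$ that arise and to show that each stratum $\T_{(\Gamma,u)}$ meets $\T_{g,\ell,\Delta}(\mathbf{A})$ in only finitely many points.

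For the combinatorial bound, a trivalent connected graph of first Betti number $g$ with $|\Delta|$ unbounded edges has, by the Euler characteristic and handshake identities, exactly $2g-2+|\Delta|$ vertices and $3g-3+|\Delta|$ bounded edges; hence only finitely many isomorphism classes of $\Gamma$ (and therefore of markings $\mathbf{E}$) can occur. The directions and weights on unbounded edges are prescribed by $\Delta$. For bounded edges, I would propagate the balancing condition \eqref{Eq:balancing_condition} inward from the ends: for each bridge $E$, removing it splits $\Gamma$ into two subgraphs whose net balancing forces $w(E)u_{(V,E)}$ to equal a signed partial sum of the vectors $w(E')u_{(V',E')}$ over unbounded edges on one side, hence bounded in norm by $\sum_v \Delta(v)\,|v|$. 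For edges in cycles, the same bound is obtained by combining the balancing at the endpoints of a chosen spanning tree with the primitivity of the $u_{(V,E)}$'s, leaving only finitely many admissible types $(\Gamma, u)$.

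For the transversality step, I would consider for each such type $(\Gamma, u)$ the evaluation map
\[ \mathrm{ev}_{(\Gamma,u)} \colon \T_{(\Gamma,u)} \longrightarrow \mathfrak{A} \coloneqq \prod_{j=1}^\ell M_\Q/L(A_j), \qquad (\Gamma,\mathbf{E},h) \longmapsto \bigl(h(p_j)\bmod L(A_j)\bigr)_{j=1}^\ell, \]
where $p_j$ denotes any point of $h(E_j)$. This map is affine on $\T_{(\Gamma,u)}$. By non-superabundancy together with the dimension identity built into generality, $\dim \T_{(\Gamma,u)} = (n-3)(1-g)+|\Delta| = \sum_j d_j = \dim \mathfrak{A}$. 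The matching condition is precisely $\mathrm{ev}_{(\Gamma,u)}(\Gamma,\mathbf{E},h) = \bar{\mathbf{A}}$, with $\bar{\mathbf{A}}$ the class of $\mathbf{A}$ in $\mathfrak{A}$; for $\mathbf{A}$ general, the tangent map of $\mathrm{ev}_{(\Gamma,u)}$ is a linear isomorphism, so this preimage is at most a single point.

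The step I expect to be the main obstacle is bounding the edge weights for edges lying in cycles of $\Gamma$, since balancing alone does not a priori control flux circulating around a loop. The cleanest remedy is to view $\T_{g,\ell,\Delta}$ as a polyhedral complex whose strata are the cones $\T_{(\Gamma,u)}$, together with their affine evaluation maps to $\mathfrak{A}$; under non-superabundancy, source and target have equal dimension, so only those cones whose image contains $\bar{\mathbf{A}}$ contribute, and local finiteness of this polyhedral complex at $\bar{\mathbf{A}}$ for generic $\mathbf{A}$ delivers the finiteness claim.
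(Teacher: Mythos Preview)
Your two-step strategy---bound the types, then show each stratum meets the constraint locus in at most one point---is exactly the paper's approach, but both steps have gaps as written.

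\textbf{Step 1 (finitely many types).} Your concern about edges in cycles is well-founded, and your proposed remedy does not resolve it. Invoking ``local finiteness of the polyhedral complex at $\bar{\mathbf{A}}$'' presupposes that only finitely many cones $\T_{(\Gamma,u)}$ have image meeting a neighbourhood of $\bar{\mathbf{A}}$, which is precisely what is in question; the argument is circular. The balancing-plus-primitivity sketch for bounded edges in cycles does not work either: primitivity bounds the direction, not the weight, and circulating flux around a cycle is not controlled by balancing at individual vertices. The paper does not attempt this directly; it simply invokes \cite[Proposition~2.1]{NS}, which establishes finiteness of types for given $(g,\Delta)$ by an independent argument.

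\textbf{Step 2 (transversality within a type).} Your evaluation map is not well-defined as stated: the class of $h(p_j)$ in $M_\Q/L(A_j)$ depends on the choice of $p_j \in h(E_j)$ unless you also quotient by the edge direction. The correct target is $\prod_j M_\Q/\bigl(L(A_j)+\Q\,u_{(\partial^-E_j,E_j)}\bigr)$, which has dimension $\sum_j d_j$; with your target $\prod_j M_\Q/L(A_j)$ the dimension is $\sum_j(d_j+1)$, off by $\ell$, so the asserted equality of dimensions fails. Once this is repaired, you still owe an argument for why the differential is injective: equal dimensions alone do not give this, and the differential depends only on $(\Gamma,u)$ and the $L(A_j)$, not on the position of $\mathbf{A}$, so ``for $\mathbf{A}$ general'' does not help directly. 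The paper supplies the missing ingredient: the matching locus $Q\cap P$ is an affine subspace that avoids the boundary of the polytope $P=\T_{(\Gamma,u)}$ (boundary points would be non-trivalent, contradicting generality), hence lies in the lineality space of $P$, which consists precisely of global translations; the hypothesis that no translation preserves $\bigcup_j A_j$ then forces $Q\cap P$ to be a point.
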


\begin{proof}
By \cite[Proposition 2.1]{NS}, there are only finitely many types of
tropical curves of genus $g$ and degree $\Delta$. The set of marked tropical curves
of a given non-superabundant type is a convex polyhedron $P$ in 
$\R^{(n-3)(1-g)+|\Delta|}$. The set of  marked tropical curves of this type matching 
$\mathbf{A}$ is the intersection of $P$ with some affine subspace $Q$ of $\R^{(n-3)(1-g)+|\Delta|}$. 
If $Q$ intersects the boundary of $P$ in $\R^{(n-3)(1-g)+|\Delta|}$, 
then the tropical curve is not trivalent and this contradicts the assumption that $\mathbf{A}$ is general. 
Therefore, the intersection of $Q \cap P$ is entirely contained in $P$, and so is an affine subspace entirely contained in $P$. If $Q \cap P$ is not zero-dimensional, this is only possible 
if $Q \cap P$ contains a one-dimensional family of translated tropical curves, 
which again contradicts the assumption that $\mathbf{A}$ is general (more precisely that no translation preserves $\bigcup_{j=1}^{\ell} A_j$). Hence $Q \cap P$ is zero-dimensional, and so is either empty or consists of a single point.
\end{proof}

\subsection{Toric degenerations of toric varieties from polyhedral decompositions}
\label{Sec: toric degenerations}
In this section we review how to produce
from a polyhedral decomposition of $M_{\Q}$
a toric degeneration over $\C$. For details we refer to \cite[\S 3]{NS}.
\begin{definition}
A \emph{polyhedral decomposition} of $M_{\Q}$ is a covering $\mathscr{P} = \{ \Xi \}$
of $M_{\Q}$ by a finite number of strongly convex polyhedra satisfying the following two properties:
\begin{itemize}
    \item[i)] If $\Xi \in \mathscr{P} $ and $\Xi' \subset \Xi$ is a face, then $\Xi' \in \mathscr{P}$.
    \item[ii)] If $\Xi,\Xi' \in \mathscr{P} $, then $\Xi \cap \Xi'$ is a common face of $\Xi$ and $\Xi'$.
\end{itemize}

\end{definition}

Let $\mathscr{P}$ be a polyhedral decomposition $M_{\Q}$. 
We will denote by $\mathscr{P}^{[k]}$ the set of 
$k$-dimensional cells of $\mathscr{P}$.
For each $\Xi\in\mathscr{P}$, let
$C(\Xi)$ be the closure of the cone spanned by $\Xi \times
\{1\}$ in $M_\RR\times\RR$:
\begin{equation}
\label{cone over Xi}
C(\Xi) =  \overline{ \big\{a \cdot (n, 1)\,\big|\, a \ge 0, n\in
\Xi\big\} } \,.
\end{equation}
Note that taking the closure here will be important while talking about the asymptotic cone in cases $\Xi$ is unbounded. 
We use the convex polyhedral cones $C(\Xi)$ to define a fan presented by its faces as
\[
\widetilde{\Sigma}_\mathscr{P}\coloneqq \big\{ \sigma\subset C(\Xi) \text{
face}\,\big|\, \Xi\in \mathscr{P}\big\}
\]
which we refer to as the fan associated to $\mathscr{P}$.
By construction, the projection onto the second factor \[M_{\RR}\times\RR\to(M_{\RR}\times\RR)/_{M_\RR}=\RR\] 
defines a non-constant map of fans from the fan $\widetilde{\Sigma}_\mathscr{P}$ to the fan $\{ 0,\RR_{\geq 0}\}$ of $\AA^1$, 
hence a flat toric morphism 
\[
\pi \colon \mathcal{X} \longrightarrow \AA^1 \,,
\]
where $\mathcal{X}$ is the $(n+1)$-dimensional toric variety over $\C$ defined by the fan $\widetilde{\Sigma}_{\mathscr{P}}$. 
Throughout this paper we will say that
\emph{$\pi \colon \mathcal{X} \rightarrow \A^1$ is the toric degeneration 
obtained from the polyhedral decomposition $\mathscr{P}$ of $M_{\Q}$.}

Note that $\pi \colon \mathcal{X}\to \AA^1$ is a degeneration of toric varieties. 
To describe the general fiber of $\pi$, we first identify $M_\RR$ with $M_\RR\times\{0\}\subset M_\RR\times\RR$ and we define the fan
\[
\Sigma_\mathscr{P}= \big\{ \sigma \cap (M_\RR\times\{0\})\,\big|\,
\sigma\in\widetilde\Sigma_\mathscr{P}\big\}\,.
\]
By Lemma $3.3$ in \cite{NS}, $\Sigma_\mathscr{P}$ is the asymptotic fan of the polyhedral decomposition $\mathscr{P}$.
Let $X$ be the $n$-dimensional toric variety over $\C$ defined by the fan $\Sigma_\mathscr{P}$. By Lemma $3.4$ of \cite{NS} we have
\begin{equation}
\nonumber
\label{degeneration1}
\pi^{-1}(\AA^1\setminus \{0\}) = X \times (\AA^1\setminus \{0\})
\end{equation}
and the closed fibers of $\pi$ over $\AA^1\setminus \{ 0\}$ are all pairwise isomorphic to $X$.  

We now describe the central fiber 
\[X_0 \coloneqq \pi^{-1}(0)\,.\] 
For $\Xi\in\mathscr{P}$ the rays emanating from $\Xi$ through
adjacent $\Xi'\in\mathscr{P}$ define a fan $\Sigma_\Xi$ by
\begin{equation}
\label{fan}
\Sigma_\Xi=\big\{ \RR_{\ge 0}\cdot (\Xi'-\Xi)\subset M_\RR/L(\Xi)
\,\big|\, \Xi'\in\mathscr{P}, \Xi\subset\Xi' \big\}.
\end{equation}
in $M_\RR/L(\Xi)$, where $L(\Xi)\subset M_\RR$ is the linear subspace associated to $\Xi$. 
Let $X_\Xi$ be the toric variety over $\C$ associated to the fan $\Sigma_\Xi$. By Proposition $3.5$ in \cite{NS}, there exist closed
embeddings $X_\Xi\hookrightarrow X_0$, $\Xi\in\mathscr{P}$ compatible with morphisms $X_{\Xi}\to X_{\Xi'}$ for $\Xi'\subset \Xi$, 
inducing an isomorphism \[\displaystyle
X_0 \simeq \lim_{\displaystyle
\genfrac{}{}{0pt}{1}{\longrightarrow}{\Xi\in\mathscr{P}} } X_\Xi \,.\]
In particular, the central fiber $X_0$ is a union of toric varieties, glued along toric boundary divisors.

\begin{example} \label{ex_polyhedral_decomposition}
The polyhedral decomposition $\mathscr{P}$ illustrated in Figure \ref{Fig: P} defines a toric degeneration of the toric variety $\PP^2$, whose associated fan is the asymptotic fan $\Sigma_{\mathscr{P}}$, into the union of the $7$ copies of $\PP^2$ and a copy of $\PP^1\times \PP^1$. The neighbourhoods of vertices of $\mathscr{P}$, labelled by red circles in the figure, correspond to the fans of the $7$ irreducible components of the central fiber.

\begin{figure} 
\center\scalebox{0.8}{\input{P.pspdftex}}
\caption{A polyhedral decomposition $\mathscr{P}$ of $M_{\Q}$ and the associated asymptotic fan $\Sigma_{\mathscr{P}}$.}
\label{Fig: P}
\end{figure}
\end{example}

\subsection{Degeneration of incidence conditions}
In the remaining of the paper, we will study curves in toric degenerations 
$\pi \colon \mathcal{X} \rightarrow \A^1$
matching some incidence conditions. We now explain how to construct these incidence conditions.

Let $A$ be an affine subspace of $M_{\Q}$ and let $P$ be a point in the $(n+1)$-dimensional torus orbit of $\mathcal{X}$.
Denoting $LC(A)$ the linear closure of $A \times \{1\}$
in $M_{\Q} \times \Q$, we introduce the closure in
$\mathcal{X}$ of the orbit of the torus 
$(LC(A) \cap (M \times \Z)) \otimes_{\Z} \C^{\times}$ passing through the point $P$:
\begin{equation} 
\label{Eq:tilde_Z}
\mathcal{Z}_{A,P} \coloneqq \overline{((LC(A) \cap (M \times \Z)) \otimes_{\Z} \C^{\times})\cdot P}\subset \mathcal{X}\,.
\end{equation}
For every $t \in \A^1$, we define
\begin{equation}
\label{Eq:tilde_Z_0}
Z_{A,P,t} \coloneqq \mathcal{Z}_{A,P} \cap X_t \,,
\end{equation}
the intersection of $\mathcal{Z}_{A,P}$ with the fiber $X_t \coloneqq \pi^{-1}(t)$. In particular, 
$Z_{A,P,0}$ is a subvariety of the central fiber $X_0$.

\subsection{Good polyhedral decompositions}

\begin{definition}
\label{Def: good decomposition}
Let $g \in \NN$, 
$\Delta \colon M \setminus \{0\} \rightarrow \NN$ with finite support, and
$\mathbf{A} = (A_1,\ldots,A_\ell)$ an affine constraint that is general for $(g,\Delta)$. 
Let $\mathscr{P}$ be an integral polyhedral
decomposition of $M_\QQ$ such that, for every 
$(\Gamma, \mathbf{E}, h) \in \T_{g,\ell,\Delta}(\mathbf{A})$, we have the following.
\begin{itemize}
\item[(i)] The image $h(\Gamma)$ is contained in the one-skeleton of $\mathscr{P}$, that is, $h(\Gamma^{[0]})\subset
\bigcup_{\Xi\in\mathscr{P}^{[0]}} \Xi $ and $h(\Gamma^{[1]})\subset
\bigcup_{\Xi\in\mathscr{P}^{[1]}} \Xi $.
\item[(ii)] Intersection points of $h(\Gamma)$ with the constraints are vertices of 
$\mathscr{P}$: $h(\Gamma)\cap A_j\subset \mathscr{P}^{[0]}$ for every 
$j=1,\ldots,\ell$.
\item[(iii)] For every bounded edge $E \in \Gamma^{[1]}$, the weight $w(E)$ divides the integral affine length of $h(E)$. 
\end{itemize}
Then, we say $\mathscr{P}$ is a \emph{polyhedral decomposition good
for $(g,\Delta,\mathbf{A})$.}
\end{definition}

\begin{proposition} \label{prop:good_decomp}
Let $g \in \NN$, 
$\Delta \colon M \setminus \{0\} \rightarrow \NN$ with finite support, and
$\mathbf{A} = (A_1,\ldots,A_\ell)$ an affine constraint that is general for $(g,\Delta)$. 
Up to a rescaling of $M \subset M_{\Q}$, there exists a
polyhedral decomposition 
of $M_{\Q}$ which is good for $(g,\Delta, \mathbf{A})$.
\end{proposition}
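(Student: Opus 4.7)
The plan is to combine the finiteness of $\T_{g,\ell,\Delta}(\mathbf{A})$ provided by Proposition \ref{prop_finite_tropical} with an explicit construction of a polyhedral decomposition that accommodates the finite geometric data associated with these tropical curves and the constraints. First, I would enumerate $\T_{g,\ell,\Delta}(\mathbf{A}) = \{h_1, \ldots, h_K\}$ and introduce the finite set $V_0 \subset M_\Q$ consisting of all vertices $h_i(V)$, all pairwise intersection points $h_i(\Gamma_i) \cap h_j(\Gamma_j)$ for $i \neq j$, and all points of $h_i(\Gamma_i) \cap A_j$. This is finite because each $\Gamma_i$ has finitely many edges, each edge lies on an affine line with rational slope, and two distinct such lines meet in at most one point. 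Denote by $\mathcal{E}$ the finite collection of segments and rays obtained by subdividing the edges $h_i(E)$ at the points of $V_0$ lying on them.

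Second, I would rescale $M \rightsquigarrow \tfrac{1}{N}M$ with $N \in \NN$ chosen so that $V_0 \subset \tfrac{1}{N}M$ and so that $N$ is a common multiple of the weights $w(E)$ over all bounded edges $E$ of every $\Gamma_i$. The first condition places every distinguished point on the refined lattice. The second, combined with the observation that rescaling $M$ to $\tfrac{1}{N}M$ multiplies all integer affine lengths by $N$, ensures that $w(E)$ divides the integral affine length of $h_i(E)$ for every bounded edge, which is exactly condition (iii) of Definition \ref{Def: good decomposition}. Only finitely many denominators and weights are involved, so such an $N$ exists.

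The remaining and main difficulty is to extend the one-dimensional data $\mathcal{E}$, together with the vertex set $V_0$, to an integral polyhedral decomposition $\mathscr{P}$ of $M_\Q$ whose one-skeleton contains every element of $\mathcal{E}$ and whose zero-skeleton contains $V_0$; once this is done, conditions (i) and (ii) follow automatically. I would proceed in two stages: pick a large integral bounded polytope $B$ containing every bounded edge and every point of $V_0$ in its interior, and then construct an integral polyhedral subdivision of $B$ refining the cell structure induced by $\mathcal{E} \cap B$, for instance as a sufficiently generic regular (coherent) refinement in the style of Gelfand-Kapranov-Zelevinsky. Outside $B$, the unbounded edges extend along finitely many rational rays whose asymptotic directions generate a fan $\Sigma_\infty$ in $M_\R$; after completing $\Sigma_\infty$ to a complete integral fan containing every such ray and translating its cones so that their apices sit on the appropriate boundary points of $B$, one obtains a polyhedral decomposition of $M_\R \setminus \mathrm{int}(B)$ that is compatible with the one on $B$. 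Gluing the two along $\partial B$ yields the required $\mathscr{P}$. The main technical obstacle is to choose the subdivision of $B$ and the completion of $\Sigma_\infty$ coherently enough that they agree on $\partial B$; this is a delicate but standard piece of polyhedral combinatorics.
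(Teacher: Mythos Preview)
Your approach is essentially the one the paper invokes: the paper's proof simply quotes the finiteness from Proposition~\ref{prop_finite_tropical} and then defers to \cite[Proposition~3.9]{NS} (via part~(1) of the proof of \cite[Theorem~8.3]{NS}), and what you have sketched is precisely the content of that proposition --- collect the finitely many rational vertices, crossings and constraint intersections, rescale the lattice, then build an integral polyhedral decomposition containing this $1$-complex by subdividing a large box and attaching asymptotic cones outside.

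Two small points to tighten. First, for $n=2$ the maps $h_i$ need not be injective (Definition~\ref{Def: general affine}), so your set $V_0$ should also include the self-crossings of each $h_i(\Gamma_i)$; otherwise condition~(i) can fail. Second, your divisibility argument for condition~(iii) is not quite right as stated: requiring both $V_0\subset\tfrac{1}{N}M$ and $w(E)\mid N$ does not in general force $w(E)$ to divide the new integral length $N\ell_E$ (e.g.\ endpoints $(0,0),(1/6,0)$, weight $4$, $N=12$ gives length $2$). The clean fix is sequential: first rescale by $N_1$ so that $V_0$ becomes integral (making all $\ell_E$ integers), then rescale again by $N_2=\operatorname{lcm}_E w(E)$. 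With these adjustments your outline is correct and matches the cited argument.
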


\begin{proof}
By Proposition \ref{prop_finite_tropical}, the set 
$\T_{g,\ell,\Delta}(\mathbf{A})$ is finite. Therefore the result follows from
part (1) of the proof of Theorem 8.3 of \cite{NS} (which relies on 
\cite[Proposition 3.9]{NS}).
\end{proof}

\section{Real stable log maps}
\label{Sec:real_stable_log}

Log geometry is an efficient tool to work systematically with 
normal crossings divisors in algebraic geometry, and so to deal with many questions involving degenerations and compactifications. 
In this paper, extending the work of \cite{NS} considering complex curves, we use log geometry to study real curves in special fibers of toric degenerations and their deformations. 

Throughout this section we assume basic familiarity with log geometry \cite{K,O}. 
For an introductory review see \cite{A_KN}. 
Nevertheless, we start reviewing a couple of definitions to fix our notation.
Then, we introduce real stable log maps. 

\subsection{Log schemes}
\label{Sec:log schemes}

All the monoids in this paper are assumed to be commutative.

\begin{definition}
\label{Def: log structure}
Let $X$ be a scheme. A \emph{pre log structure} on $X$ is a sheaf of monoids $\shM$ on $X$ together with a homomorphism of monoids 
$\alpha \colon \shM \lra \mathcal{O}_X$ where we consider the structure sheaf $\mathcal{O}_X$ as a monoid with respect to multiplication. 
A pre log structure on $X$ is called a \emph{log structure} if $\alpha$ induces an isomorphism
\[
\restr{\alpha}{\alpha^{-1}(\mathcal{O}_X^{\times})}:{\alpha}^{-1}(\mathcal{O}_X^{\times})\lra \mathcal{O}_X^{\times}.
\]
We call a scheme $X$ endowed with a log structure a \emph{log scheme}, and denote the log structure on $X$ by $(\mathcal{M}_X, \alpha_X)$, 
or sometimes by omitting the structure homomorphism from the notation, just by $\mathcal{M}_X$. 
We denote a scheme $X$, endowed with a log structure by $(X, \mathcal{M}_X)$.
\end{definition}
Let $X$ be a log scheme. The \emph{ghost sheaf} of $X$ is the sheaf on $X$ defined by
\[
\overline{\M}_{X} \colon = \M_X /{\alpha^{-1}(\mathcal{O}_{X}^{\times})}\,.
\]
\begin{example} 
\label{The divisorial log structure}
Let $D\subset X$ be a divisor. Let $j \colon X\setminus D\hookrightarrow X$ denote the inclusion map. 
The \emph{divisorial log structure} on $X$ is the pair $(\shM_{(X,D)},\alpha_X)$, 
where $\shM_{(X,D)}$ is  the sheaf of regular functions on $X$, that restrict to units on $X\setminus D$. 
That is,
\[\shM_{(X,D)}\coloneqq j_*(\mathcal{O}_{X\setminus D}^\times)\cap \mathcal{O}_X\] 
The structure homomorphism $\alpha_X$ is given by the inclusion $\alpha_X:\shM_{(X,D)} \hookrightarrow \mathcal{O}_X$.

\begin{figure} 
\center\scalebox{0.6}{\input{P113.pspdftex}}
\caption{The fan $\Sigma_{\PP_{(1,1,3)}}$ for the toric variety $\PP_{(1,1,3)}$ on the left and its dual $\Delta_{\PP_{(1,1,3)}}$ on the right, referred to as the moment map image of $\PP_{(1,1,3)}$ \cite[Chapter 4]{Fulton}, along with some points labelled on different types of fibers of the moment map $\PP_{(1,1,3)} \to \Delta_{\PP_{(1,1,3)}}$.}
\label{Fig: P113}
\end{figure}

\end{example}
Analogous to assigning a sheaf to a presheaf, we can assign a log structure to a pre log structure, using a fibered coproduct, as follows.
\begin{definition}
\label{Def: Log pre log}
Let $\alpha \colon \shP\lra \mathcal{O}_X$ be a prelog structure on $X$.  
We define the \emph{log structure associated to the prelog structure $(\shP,\alpha)$} on $X$ as follows. Set 
\[\shP^a \coloneqq \bigslant{\shP\oplus \mathcal{O}_X^{\times}}
{\{(p,\alpha(p)^{-1})\,\big|\, p\in \alpha^{-1}(\mathcal{O}^\times_X)\big\}}\]
and define the structure homomorphism 
$\alpha^a \colon  \shP^a  \longrightarrow  \mathcal{O}_X$ by
\begin{eqnarray}
\nonumber
\alpha^a(p,h) & = h\cdot \alpha(p)
\end{eqnarray}
One can easily check that $(\shP^a,\alpha^a)$ is a log structure on $X$.
\end{definition}

Let $Y$ be a log scheme, and let $f\colon X\to Y$ be a scheme theoretic morphism. Then, the log structure on $Y$, given by $\alpha_Y\colon \shM_Y\lra \mathcal{O}_Y$,
induces a log structure on $X$ defined as follows. First define a prelog structure,
by considering the composition
\[  f^{-1}\shM_Y\to f^{-1}\mathcal{O}_Y \to \mathcal{O}_X \,.\]
Then, we endow $X$ with the log structure associated to this prelog structure, as in Definition \ref{Def: Log pre log}. We refer to this log structure as the \emph{pull back log structure} or the \emph{induced log structure} on $X$, and denote it by $\shM_X = f^* \shM_Y$.

\begin{example}
\label{Ex: trivial log point}
For every scheme $X$, $\mathcal{M}_X \coloneqq \cO_X^{\times}$ define a log structure on $X$, called the \emph{trivial log structure}. In particular, taking $X= \Spec \C$, we call 
$(\Spec \C,\C^{\times})$ the \emph{trivial log point}.
\end{example}

\begin{example}
\label{Ex: standard log point}
Let $X \coloneqq \Spec \CC$. Define $\mathcal{M}_X \coloneqq \CC^{\times} \oplus \NN$, and $\alpha_X\colon \M_X \to \CC$ as follows.
\begin{center}
$\alpha_X ( x , n ) \coloneqq \left\{
	\begin{array}{ll}
		x  & \mbox{if } n = 0 \\
		0 & \mbox{if } n \neq 0
	\end{array}
\right.
$
\end{center}
The corresponding log scheme $O_0 \coloneqq (\Spec \CC, \C^{\times} \oplus \NN)$ is called the \emph{standard log point}. 
One can check that the log structure on the standard log point is the same as the pull-back of the divisorial log  structure 
on $\A^1$ with the divisor $D=\{0\}\subset \A^1$.
\end{example}
\begin{definition}
A log structure $\shM$ on a scheme $X$ is called {\it coherent} 
if \'etale locally on $X$ there exists a finitely generated monoid $\shP$ and,
denoting  $\shP_X$ the constant sheaf corresponding to $\shP$,
a prelog structure $\shP_X\to\mathcal{O}_X$ whose associated log structure, 
defined as in Definition \ref{Def: Log pre log}, is isomorphic to $\shM$. 
Recall that a monoid $\shP$ is called {\it integral} if the canonical map 
$\shP \rightarrow \shP^{\mathrm{gp}}$ from $\shP$ to its Grothendieck group $\shP^{\mathrm{gp}}$ 
is injective.
The sheaf $\shM$ is called {\it integral} if $\shM$ is a sheaf of integral monoids. If $\shM$ is both coherent and integral then it is called {\it fine}. 
\end{definition}

\begin{definition}
\label{Def:chart}
For a scheme $X$ with a fine log structure $\shM$ a {\it chart for $\shM$ } is a homomorphism $\shP_X \to \shM$ 
for a finitely generated integral monoid $\shP$ which induces $\shP^a\cong \shM$ over an \'etale open subset of $X$. 
\end{definition}
\begin{remark}
\label{Rem: toric log structure}
Let $X$ be a toric variety defined by a fan $\Sigma$ in $N_\RR$.
The \emph{toric log structure} on $X$ is the divisorial 
log structure $\M_X=\M_{(X,D)}$ defined by the toric boundary divisor 
$D \subset X$, that is, the union of toric divisors of $X$.
Moreover this log structure is fine. Note that for any $\sigma \in \Sigma$,
and recalling that $N \coloneqq \Hom(M,\Z)$,
we have the affine toric subset 
\[U_\sigma= \Spec\CC[\sigma^\vee\cap N]\] 
of $X$, where $\sigma^\vee$ is the dual cone of 
$\sigma$, and the divisorial log structure $\M_X$ is locally generated by the monomial functions on this open subset. That is, the canonical map
\begin{eqnarray}
\label{toric chart}
\sigma^\vee\cap N & \lra & \CC[\sigma^\vee\cap N]\\
\nonumber
n & \longmapsto & z^n
\end{eqnarray}
is a chart for the log structure on $U_\sigma$. 
\end{remark}
\begin{proposition}
\label{toric charts and ghost sheaves}
For any point $x$ in the interior of the toric stratum of $X$ associated to $\sigma$, the toric chart~\eqref{toric chart} induces a canonical isomorphism
\[
\sigma^\vee\cap N/\sigma^\perp\cap N \stackrel{\sigma}{\lra} \ol \M_{X,x}.
\]
\qed
\end{proposition}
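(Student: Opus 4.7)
The plan is to unwind the construction of the log structure associated to the toric prelog structure, and then compute the stalk of the ghost sheaf at $x \in T_\sigma$ by a direct calculation.

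First, I would pin down the preimage of the units under the chart. For $x \in T_\sigma \subset U_\sigma$, a monomial $z^n$ with $n \in \sigma^\vee \cap N$ is invertible in $\mathcal{O}_{X,x}$ if and only if $n \in \sigma^\perp \cap N$, since $T_\sigma = \Spec \C[\sigma^\perp \cap N]$ and monomials $z^n$ with $n \in (\sigma^\vee \setminus \sigma^\perp)\cap N$ vanish along $T_\sigma$. Hence, viewing $\alpha \colon \sigma^\vee \cap N \to \mathcal{O}_{X,x}$ as the chart map $n \mapsto z^n$, we have $\alpha^{-1}(\mathcal{O}_{X,x}^\times) = \sigma^\perp \cap N$.

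Next, I would unwind Definition \ref{Def: Log pre log} at the stalk: the log structure $\M_{X,x}$ is presented as $(\sigma^\vee \cap N \oplus \mathcal{O}_{X,x}^\times)/\sim$, where the relation identifies $(p,\alpha(p)^{-1})$ with $(0,1)$ for every $p \in \sigma^\perp \cap N$. The structure homomorphism sends $[(n,u)]$ to $u \cdot z^n$, so $[(n,u)] \in \M_{X,x}^\times$ if and only if $n \in \sigma^\perp \cap N$; using the defining relation, any such unit is equivalent to $[(0, u \cdot \alpha(n))]$. Thus $\M_{X,x}^\times$ is precisely the image of $\{0\} \oplus \mathcal{O}_{X,x}^\times$, in agreement with the defining property of a log structure.

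Finally, I would consider the composition
\[
\sigma^\vee \cap N \;\longrightarrow\; \M_{X,x} \;\longrightarrow\; \overline{\M}_{X,x}, \qquad n \longmapsto [(n,1)] \longmapsto \overline{[(n,1)]}.
\]
Surjectivity is clear since any $[(n,u)] \in \M_{X,x}$ is equivalent to $[(n,1)]$ modulo the units $[(0,u)]$. For the kernel, $[(n,1)]$ maps to zero in $\overline{\M}_{X,x}$ iff it is a unit of $\M_{X,x}$, which by the previous step happens iff $n \in \sigma^\perp \cap N$. This yields the canonical isomorphism $\sigma^\vee \cap N / \sigma^\perp \cap N \xrightarrow{\sim} \overline{\M}_{X,x}$, and naturality in $x$ is immediate from the construction. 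There is no real obstacle here; the only mild care is needed in the quotient construction of the associated log structure and in identifying its units, which has already been handled above.
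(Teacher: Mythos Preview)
Your argument is correct and is precisely the standard unwinding of the associated log structure; the paper itself does not give a proof but simply records the statement as straightforward with a pointer to \cite[Prop.~A.30]{MyThesis}, so you have supplied what the paper omits. One small remark: when you pass from ``the kernel is $\sigma^\perp\cap N$'' to the isomorphism on the quotient, remember that for monoid homomorphisms knowing the preimage of $0$ does not in general determine the fibers; here it works because $\sigma^\perp\cap N$ is a group, and your explicit description of $\M_{X,x}$ as $(\sigma^\vee\cap N\oplus\mathcal{O}_{X,x}^\times)/\sim$ already shows that $[(n,1)]$ and $[(m,1)]$ have the same image in $\overline{\M}_{X,x}$ exactly when $n-m\in\sigma^\perp\cap N$.
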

\begin{proof}
The proof is straightforward and can be found in \cite[Prop A.30]{MyThesis}. 
\end{proof}

\begin{example} \label{Ex: P113}
Consider the weighted projective plane 
\[X = \PP_{(1,1,3)} = (\CC^{3} \setminus \{ 0\}) / \CC^* \,, \]
where the action of $\CC^*$ is given by $\xi \cdot (x_0,x_1,x_2) = (\xi x_0, \xi x_1, \xi^3 x_2)$, with associated toric fan and moment image $\Delta_{\PP_{(1,1,3)}}$ as in Figure \ref{Fig: P113}. Recall that $ \PP_{(1,1,3)}$ admits a torus fibration onto $\Delta_{\PP_{(1,1,3)}}$, with fibers illustrated on the right hand side of the figure. We describe below the toric log structure locally around points on different types of fibers. Let $D=D_1 \cup D_2 \cup D_3$ be the toric boundary divisor.
Denote by $\M_X$ be the toric log structure as in Remark \ref{Rem: toric log structure}, given by the sheaf of regular functions which are units in $X \setminus D$. 

For a point $p_i$ contained in the $2$-dimensional torus orbit of $\PP_{(1,1,3)}$, that is, whose image under the moment map maps to the interior of $\Delta_{\PP_{(1,1,3)}}$, the stalk of $\M_X$ is generated by functions that are all units on $X \setminus D$. Hence, the stalk of the ghost sheaf at such a point is trivial; $\ol\M_{X,p_i}=0$. Let $p_b$ be a generic point in $D \subset \PP_{(1,1,3)}$. Then the image of $p_b$ under the moment map lies in the boundary of $\Delta_{\PP_{(1,1,3)}}$. The stalk of $\M_X$ is generated by functions which are of the form $ h \cdot x^a$, where $a \in \NN$, $h$ is a unit on $X \setminus D$, and the local equation of $D$ in a neighbourhood of $p_b$ is given by $(x=0)$. Then, the stalk of the ghost sheaf is 
\begin{align*}
    \ol\M_{X,p_b} & \cong \NN \\
    x^a & \mapsto a
\end{align*}
Note that one can describe the stalk at such points $p_b \in D_i \subset D$, for $i=1,2,3$, as the integral points on the dual of the ray in $\Sigma$ corresponding to the divisor $D_i$. Similarly, for the two smooth torus fixed points $p_{(0,0)}$ and $p_{(3,0)}$,
we have $\ol\M_{X,p_{(0,0)}}
=\ol\M_{X,p_{(3,0)}}
=\NN^2$. Finally, for the singular torus fixed point 
$p_{(0,1)}$, the stalk of the ghost sheaf $\ol\M_{X,p_{(0,1)}}$ is the monoid of integral points contained in the dual cone $\sigma^{\vee}$ of the cone
$\sigma$ spanned by $(-1,0)$ and $(1,3)$ in the fan.
\end{example}

\begin{figure} 
\center\scalebox{0.6}{\input{2P113.pspdftex}}
\caption{A polyhedral decomposition and the moment map of the central fiber of the associated toric degeneration.}
\label{Fig: 2P113}
\end{figure}

Let $\pi \colon \mathcal{X} \to \AA^1 =\Spec \CC[t]$ be a toric degeneration of toric varieties as in \S \ref{Sec: toric degenerations}. 
The total space $\mathcal{X}$ is toric and so can be endowed with the divisorial 
toric log structure $\shM_\mathcal{X}$.
We endow the central fiber $X_0 \subset \mathcal{X}$ with the pull-back log structure
\begin{equation}  \label{Eq: pullback_log}
\shM_{X_0} = \iota^* \shM_\mathcal{X} \end{equation}
induced from $\mathcal{X}$ by the inclusion $\iota \colon X_0 \hookrightarrow \mathcal{X}$. 
Note that the toric affine cover of $\mathcal{X}$ induces an affine cover of the central fiber 
$X_0$ by restriction to $t=0$, and this defines a chart for the log structure $\shM_{X_0}$.

\begin{example} \label{Ex: 2P113}
In Figure \ref{Fig: 2P113}, we illustrate on the left a polyhedral decomposition of $\RR^2$ and on the right the moment map of the central fiber $X_0$ of the associated toric degeneration $\mathcal{X}$ as in 
\S\ref{Sec: toric degenerations}. The central fiber $X_0$ has two irreducible components, both isomorphic to the weighted projective 
plane $\PP_{(1,1,3)}$ (see Example \ref{Ex: P113}). We endow $X_0$ with the pullback log structure 
\eqref{Eq: pullback_log} $\M_{X_0}$. 
For $p_i$
a point contained in the $2$-dimensional torus orbit of one of the two irreducible components, we have $\ol\M_{X_0,p_i}=\NN$.
From the point of view of the degeneration 
$\pi \colon \cX \rightarrow \A^1=\Spec \C[t]$, $\ol\M_{X_0,p_i}=\ol\M_{\mathcal{X},p_i}=\NN$ is generated by 
$t^n$, $n \in \NN$. Note that the restriction of the log structure
on $X_0$ to the toric components is not the toric log structure
such as described in Example \ref{Ex: P113}: the ghost sheaf of the toric log structure is generically trivial, whereas the ghost sheaf of the log structure restricted from $X_0$ is generically $\NN$.
A more general comparison of these two log structures can be found in 
\cite[Lemma 5.13]{GS1}.

For a point $p_d$ in the double locus of $X_0$, we have $\ol\M_{X_0,p_d}=S_e \coloneqq \NN^2 \oplus_\NN \NN$, where 
$e$ is the integral length of the bounded vertical edge of the polyhedral decomposition, and where in the fibered coproduct 
$\NN^2 \oplus_\NN \NN$
one uses the maps $\NN \rightarrow \NN^2$, $1 \mapsto (1,1)$, and
$\NN \rightarrow \NN$, $1 \mapsto e$. Transversally to the double locus, the log structure near $p_d$
is locally isomorphic to the restriction over $t=0$ of the toric log structure on the toric surface of equation $zw=t^e$, whose algebra of regular functions is exactly the monoid algebra of the monoid $S_e$.
\end{example}

\begin{definition}
\label{Def: log morphism}
A \emph{morphism of log schemes} $f \colon (X,\mathcal{M}_X) \to (Y,\mathcal{M}_Y)$ is a morphism of schemes $f \colon X \to Y$
along with a homomorphism of sheaves of monoids $f^\sharp \colon f^{-1}\mathcal{M}_Y \to \mathcal{M}_X$ such that
the diagram
\[\begin{CD}
f^{-1}\M_Y @>{f^\sharp}>> \M_X\\
@V{\alpha_Y}VV@VV{\alpha_X}V\\
f^{-1}\shO_Y@>{f^*}>>\shO_X.
\end{CD}\]
is commutative. Here, $f^*$
is the usual pull-back of regular functions defined by the
morphism $f$. Given a morphism of log spaces $f: (X,\M_X) \to (Y,\M_Y)$, we denote by $\underline{f}\colon X \to Y$ 
the underlying morphism of schemes. By abuse of notation, the underlying morphism on topological spaces is also denoted by $\underline{f}$. 
\end{definition}

\begin{definition} \label{def_strict}
Let  $f \colon (X,\mathcal{M}_X) \to (Y,\mathcal{M}_Y)$ be a log morphism and let $x$ be a point of $X$. We say that $f$ is \emph{strict} at the point $x$
if $f^\sharp$ induces an isomorphism $f^{-1} \shM_{Y,f(p)} \simeq \shM_{X,p}$.
\end{definition}

We refer to \cite[Defn. $3.23$]{Gross} for the notion of \emph{log smooth morphism}.
We recall that a toric variety is log smooth over the trivial log point. 
If $\pi \colon \mathcal{X} \rightarrow \A^1$ is a toric degeneration of toric varieties, then 
$\pi$ can be naturally viewed as a log smooth morphism. In restriction to $t=0$, we get a log 
smooth morphism $X_0 \rightarrow O_0$, that is, $X_0$ is log smooth over the standard log 
point $O_0$.
We also refer to \cite[Remark 3.25]{Gross} for the notion of 
\emph{integral} morphism of log schemes.

\subsection{Stable log maps}
\label{Sec: Log curves}

We recall that a $\ell$-marked stable map with target a scheme $X$ is 
a map $f \colon C \rightarrow X$, where $C$ is a proper nodal curve
with $\ell$ marked smooth points $\mathbf{x}=(x_1,\ldots,x_\ell)$,
such that the group of automorphisms of $C$ fixing $\mathbf{x}$
and commuting with $f$ is finite.

The notion of stable map has a natural generalization to the setting of logarithmic 
geometry \cite{AbramovichChen, GSlogGW,ACGS17}. 

\begin{definition}
\label{logCurve}
Let $X \rightarrow B$ be a log morphism between log schemes.
A \emph{$\ell$-marked stable log map with target $X \rightarrow B$}
is a commutative diagram of log schemes
\[\begin{CD}
C @>{f}>> X\\
@V{\pi}VV@VV{}V\\
W@>{}>>B\,,
\end{CD}\]
together with a tuple of sections $\mathbf{x} = (x_1,\dots,x_{\ell})$
of $\pi$,
where
$\pi$ is a proper log smooth and integral morphism of
log schemes, such
that, for every geometric point $w$ of $W$, the restriction of $\underline{f}$ to $w$
with the marked points $\underline{\mathbf{x}}(w)$ is an ordinary stable map, and such that, if $U \subset C$ is the non-critical locus of $\underline{\pi}$, we have 
$\overline{\shM}_C|_U \simeq \underline{\pi}^{*} \overline{\shM}_W
\oplus \bigoplus_{j=1}^{\ell} (x_j)_{*}\NN_W$.
\end{definition}

\begin{remark} \label{rem_basic}
When the scheme underlying $W$ in Definition \ref{logCurve}
is a point, then $W$ is a log point $(\Spec \C, \C^{*} \oplus Q)$, defined as the standard log point $O_0=(\Spec \C,\C^{*} \oplus \NN)$ as in Example \ref{Ex: standard log point}, but with $\NN$ replaced by a 
possibly more complicated monoid $Q$.
The general enumerative theory of stable log maps, called log Gromov--Witten theory and developed in \cite{GSlogGW, AbramovichChen}, is based on the notion of \emph{basic stable log map}.
For a basic stable log map over a log point 
$(\Spec \C, \C^{*} \oplus Q)$, the monoid $Q$ is the so-called 
\emph{basic monoid}. The basic monoid is uniquely determined by the combinatorial type of the stable log map and has a natural tropical interpretation: the dual cone $\Hom(Q,\RR_{\geq 0})$
is the base of a universal family of tropical curves with given combinatorial type. 
For the present paper, we will mostly not use this general theory because we will end up with a finite list of unobstructed stable log maps over the standard log point $O_0$. In particular, we will have a one-parameter smoothing log maps and there is always a distinguished morphism
$O_0 \rightarrow (\Spec\C,\C^{*}\oplus Q)$, where $Q$ is the basic monoid, just coming from our degeneration situation.
One exception is in the final step of the proof of 
Theorem \ref{main theorem}, where we explain how a computational proof of \cite{NS} can be replaced by a conceptual argument based on the stable reduction theorem for basic stable log maps of 
\cite{GSlogGW}.
\end{remark}

The following result is the specialization to the case of a log smooth curve over the standard log point of the general theorem of Fumiharu Kato \cite [p.222]{Katof} describing explicit local charts for a log smooth curve.
We denote by $\sigma \colon \NN \rightarrow \C^{\times}$ the chart on the standard 
log point $O_0 \coloneqq (\Spec \C, \C^{\times} \oplus \NN)$ defined by 
\begin{center}
$\sigma(n) \coloneqq \left\{
	\begin{array}{ll}
		1  & \mbox{if } n = 0 \\
		0 & \mbox{if } n \neq 0
	\end{array}
\right.$
\end{center}
(see Example \ref{Ex: standard log point}).

\begin{theorem}(\cite [p.222]{Katof} ) 
\label{Thm: structure of log curves}
Let $\pi \colon C \rightarrow O_0$ be a log smooth and integral curve over the 
standard log point $O_0$.
Then,
\'etale locally, $(C,\mathcal{M}_C)$ is isomorphic to one of
the following log schemes $V$ over $O_0$.
\begin{enumerate}
\item[(i)]
$\Spec(\CC[z])$ with the log structure induced by the chart
\[
\NN\lra \mathcal{O}_V,\quad q\longmapsto \pi^{*} \sigma(q).
\]
\item[(ii)]
$\Spec(\CC[z])$ with the log structure induced by the chart
\[
\NN\oplus\NN\lra \mathcal{O}_V,\quad (a,q)\longmapsto z^a \pi^{*}\sigma(q).
\]
\item[(iii)]
$\Spec(\CC[z,w]/(zw))$ with the log structure
induced by the chart
\[
S_e \coloneqq \NN^2\oplus_\NN \NN\lra \mathcal{O}_V,\quad
\big((a,b),q\big)\longmapsto  z^a w^b \pi^{*}\sigma(q) \,.
\]
Here $\NN\to\NN^2$ is the diagonal embedding and $\NN\to \NN$,
$1\mapsto e$ is some homomorphism uniquely defined by $\pi \colon C\to O_0$.
Moreover, $e \neq 0$.
\end{enumerate}
In each case, the log morphism $\pi \colon C \to O_0$ is represented by the
canonical maps of charts $\NN\to \NN$, $\NN\to \NN\oplus\NN$
and $\NN\to \NN^2\oplus_\NN \NN$, respectively where we identify the domain $\NN$ 
with the second factor of the image. 
\end{theorem}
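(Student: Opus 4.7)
The plan is to apply Kato's chart criterion for log smoothness to reduce the statement to a classification of possible local monoid charts, then to identify each chart with one of the three models by unpacking the corresponding fibre product.

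First I would invoke the chart criterion: since $\pi \colon C \to O_0$ is log smooth, integral, and of relative dimension one, \'etale locally near each geometric point $x \in C$ there exists a chart $\psi \colon \NN \to Q$ of $\pi$ (the source $\NN$ being the standard chart of $O_0$) with $Q$ fine and integral, such that the induced morphism
\[
C \lra O_0 \times_{\Spec\C[\NN]} \Spec\C[Q]
\]
is \'etale at $x$. Integrality of $\pi$ translates into integrality of $\psi$, and log smoothness forces $\mathrm{rank}(Q^{\mathrm{gp}}/\Z) \le 1$.

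Next I would classify the pair $(\bar Q, \bar\psi)$, where $\bar Q = Q/Q^{\times}$ is the sharp quotient (this is harmless because passing from $Q$ to $\bar Q$ only changes the local model by a further \'etale cover). A short combinatorial argument, using that $\bar Q$ is fine and sharp of rank at most two and that $\bar\psi$ is an integral homomorphism from $\NN$, leaves exactly three isomorphism classes: case (i) $\bar Q = \NN$ with $\bar\psi = \mathrm{id}$; case (ii) $\bar Q = \NN \oplus \NN$ with $\bar\psi(q) = (0,q)$; and case (iii) $\bar Q = \NN^2 \oplus_\NN \NN$, the pushout of the diagonal embedding $\NN \hookrightarrow \NN^2$ along $\NN \xrightarrow{\cdot e} \NN$ for some $e \ge 1$, with $\bar\psi$ induced by the second pushout factor.

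Finally, for each case I would compute the fibre product explicitly. In (i), $\Spec \C[\NN] = \A^1$, the base change is a reduced point, and \'etaleness over it together with relative dimension one gives $\Spec \C[z]$ with the pulled-back chart $q \mapsto \pi^{*}\sigma(q)$. In (ii), $\Spec \C[\NN \oplus \NN] = \A^2 = \Spec \C[z,t]$ and the fibre at $t=0$ is $\Spec \C[z]$ with the additional generator contributing the monomial $z^{a}$, yielding the chart $(a,q) \mapsto z^{a}\pi^{*}\sigma(q)$. In (iii), $\Spec \C[\NN^2 \oplus_\NN \NN]$ is the affine toric threefold $\{zw = t^{e}\} \subset \A^{3}$, whose fibre at $t=0$ is exactly $\Spec \C[z,w]/(zw)$ with the chart $((a,b),q) \mapsto z^{a}w^{b}\pi^{*}\sigma(q)$; the exponent $e$ is intrinsic to the log node and nonzero by integrality of $\psi$, since otherwise the fibre product would fail to be integral along the two branches. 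The main obstacle is the monoid classification in the second step: ruling out alternative rank-two fine monoids, such as $\NN^{2}$ with $\bar\psi(1) = (1,0)$ which would produce a reducible surface rather than a node after base change, requires the integrality hypothesis on $\psi$ in an essential way, and singling out the diagonal pushout as the unique source of the invariant $e$ is the most delicate point of the argument.
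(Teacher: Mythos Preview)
The paper does not give its own proof of this statement: it is simply quoted from Kato's paper \cite{Katof} and used as a black box, so there is nothing in the paper to compare against.

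Your sketch is the standard route to Kato's theorem and is broadly correct: invoke the toroidal chart criterion for log smoothness, classify the sharp fine monoids $\bar Q$ of rank at most two receiving an integral map from $\NN$, and read off the local models as fibre products. Two small points are worth tightening. First, the rank bound should be stated for the cokernel $\bar Q^{\mathrm{gp}}/\psi^{\mathrm{gp}}(\Z)$, not for $\bar Q^{\mathrm{gp}}/\Z$; this is what relative dimension one actually controls. Second, your justification for $e\neq 0$ is not quite the right one: the issue is not integrality of the fibre product along the branches, but rather that $e=0$ would make the chart map $\NN \to S_0 \simeq \NN^2$ the zero map, so the morphism to $O_0$ would no longer be strict away from the node and the local model would not be a curve over the standard log point at all (equivalently, the induced map on ghost sheaves would fail to be injective, contradicting the form of $\pi$). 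With those adjustments your outline matches Kato's argument.
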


In Theorem \ref{Thm: structure of log curves}, cases $\mathrm{(i),(ii),(iii)}$ 
correspond to neighbourhoods of general points, marked points and nodes of $C$ respectively.

\begin{remark}
\label{Rem:nonstandard charts}
Let $(C,\mathcal{M}_C)$ be a log smooth curve over the standard log point 
$O_0$ and let $p$ be a nodal point of $C$. 
By Theorem 
\ref{Thm: structure of log curves} (iii), the log structure of $C$ at $p$ is induced by a chart of the form
\[ \beta \colon S_e
=\NN^2 \oplus_{\NN} \NN \to \mathcal{O}_{C,q}\]
\[
 ((a,b),q) \longmapsto \left\{
	\begin{array}{ll}
		z^a w^b  & \mbox{if } q = 0 \\
		0 & \mbox{if } q \neq 0
	\end{array}
\right.
\]
For every $\zeta \in \C^{\times}$, this chart becomes after the change of variables 
$z \mapsto \zeta z$
the chart 
\[ \beta_{\zeta} \colon S_e
=\NN^2 \oplus_{\NN} \NN \to \mathcal{O}_{C,q} \] 
\[
 ((a,b),q) \longmapsto \left\{
	\begin{array}{ll}
		(\zeta^{-1}z^a)w^b  & \mbox{if } q = 0 \\
		0 & \mbox{if } q \neq 0
	\end{array}
\right.
\]
We will use the charts $\beta_{\zeta}$ in \S \ref{Sec: from real md to real log}.
\end{remark}

\begin{figure} 
\center\scalebox{0.6}{\input{2P1.pspdftex}}
\caption{ A map $\underline{\varphi}_0\colon \PP^1 \coprod_q \PP^1 \to \PP_{(1,1,3)} \coprod_{\PP^1} \PP_{(1,1,3)}$ from two copies of $\PP^1$ glued along a nodal point $q$, into two copies of the weighted projective plane $\PP_{(1,1,3)}$ glued along a copy of $\PP^1$.}
\label{Fig: 2P1}
\end{figure}

\begin{example} \label{Ex: 2P1}
In Figure \ref{Fig: 2P1}, we illustrate a map 
$\underline{\varphi}_0$ from a curve $C$ to the central fiber $X_0$
of the toric degeneration considered in Example 
\ref{Ex: 2P113}. 
Using the pullback log structure \eqref{Eq: pullback_log}, $X_0$ is a log scheme log smooth over the standard log point $O_0$. If $\M_C$ is a log structure on $C$ such that 
$(C,\M_C)$ is log smooth over $O_0$ and $\underline{\varphi}_0$
extends into a log morphism $\varphi_0$, then we have $\ol\M_{C,\xi}=\NN$ for a general point $\xi \in C$ by Theorem \ref{Thm: structure of log curves}(i), $\ol\M_{C,p_i}=\NN \oplus \NN$ for 
a point $p_i\in C$ which maps to a divisor of $X_0$
not contained in the double locus by Theorem \ref{Thm: structure of log curves}(ii), and for the node $q \in C$ we have $\ol\M_{C,q}=S_{e'}$, where the monoid $S_{e'}$ is defined analogously to $S_e$ in Theorem \ref{Thm: structure of log curves}(iii). Here we use $e'\in \NN$, as $e\in \NN$ is taken in Example 
\ref{Ex: 2P113} while defining the log structure on the target. 
In \S \ref{Sec: from real md to real log}, we will give a detailed description of the set of log morphisms $\varphi_0$ with underlying morphism $\underline{\varphi}_0$ at the level of schemes.

\end{example}

\subsection{Real stable log maps}
\label{Subsec: real log schemes}
Given a scheme $X$ over $\mathbb{C}$, a \emph{real structure} on $X$ is an anti-holomorphic involution 
$\iota \colon X\to X$ on the set of complex points of $X$. We call a pair $(X,\iota)$ a
\emph{real scheme}. By abuse of notation we usually omit $\iota$ when talking about real schemes. The real locus $X(\RR)$ of a real scheme 
$(X,\iota)$ is the set of fixed points of $\iota$ acting on the set of complex 
points of $X$. 

Toric varieties are naturally
defined over $\Z$: this follows directly from the explicit description of
toric varieties in terms of fans. 
In particular, a toric variety over $\C$ is naturally defined over 
$\R$. We will refer to this real structure as the \emph{standard real structure} 
on a toric variety.
In this paper, we will always consider standard real structures on toric varieties over 
$\C$.
If $X$ is a $n$-dimensional toric variety over $\C$ with its standard real structure, of fan $\Sigma \subset M$, then the intersection of the $n$-dimensional torus orbit 
$M \otimes_{\Z} \C^{\times} \simeq (\C^{\times})^n$ in $X$ with the real locus $X(\RR)$
is $M \otimes_{\Z} \R^{\times} \simeq (\R^{\times})^n$. We refer to \cite[\S4]{Fulton}
for more details on the real locus of toric varieties with standard real structures.

We review real structures in the setting of logarithmic algebraic geometry, as introduced in \cite{AS2}. 
For a more comprehensive study of real log schemes, we refer to \cite[\S $5$]{AS2}.
\begin{definition}
Let $(X,\M_X)$ be a log scheme over $\CC$ with a real structure
$\iota_{X} \colon X\to X$ on the underlying scheme. Then a \emph{real
structure} on $(X,\M_X)$ (\emph{lifting $\iota_{X}$)} is an
involution
\[
\tilde\iota_{X}= (\iota_X,\iota_X^\flat): (X,\M_X)\lra (X,\M_X)
\]
of log schemes over $\RR$ with underlying scheme-theoretic morphism
$\iota_{X}$. The data consisting of $(X,\M_X)$ and the involutions
$\iota_X$, $\iota_X^\flat$ is called a \emph{real log scheme}.
\end{definition}

The standard real structure $\iota_X$ on a toric variety $X$ preserves the toric divisors and so lifts by \cite[Proposition 5.4]{AS2} to a real structure $\tilde{\iota}_X$ on $(X,\M_X)$ where $\M_X$ denotes the toric log structure.  
We get a real log structure on the central fiber $X_0$ of a toric degeneration
$\pi \colon \mathcal{X} \rightarrow \A^1$ by restriction of the real log
structure on the toric total space $\cX$. We similarly get a real log structure 
on the standard log point $O_0 =(\Spec \C, \C^{\times} \oplus \NN)$ by restriction
to $\{0\} \subset \A^1$ of the 
real log structure of $\A^1$.
The restricted real log structure makes sense due to the following result which appears as Proposition $5.9$ in \cite{AS2}.
\begin{proposition}\label{Prop: compatibility with base change}
Cartesian products exist in the category of real log schemes.
\end{proposition}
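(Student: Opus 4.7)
The plan is to reduce to the known existence of fibered products in the category of log schemes (due to F.~Kato), and then transport the real structure along the universal property. Concretely, suppose we are given a diagram of real log schemes
\[(X,\tilde\iota_X)\xrightarrow{\,f\,} (Z,\tilde\iota_Z)\xleftarrow{\,g\,} (Y,\tilde\iota_Y),\]
where $f$ and $g$ are morphisms of real log schemes, i.e.\ log morphisms whose compositions with the appropriate involutions agree: $\tilde\iota_Z \circ f = f\circ \tilde\iota_X$ and $\tilde\iota_Z\circ g = g\circ \tilde\iota_Y$. The goal is to produce $(W,\tilde\iota_W)=(X,\tilde\iota_X)\times_{(Z,\tilde\iota_Z)}(Y,\tilde\iota_Y)$ in the category of real log schemes.

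First I would form $W\coloneqq X\times_Z Y$ in the category of fine log schemes, which exists by Kato's construction (the underlying scheme is $\underline X\times_{\underline Z}\underline Y$ and the log structure is the pushout of $f^{-1}\shM_Z\to \shM_X$ and $g^{-1}\shM_Z\to \shM_Y$ along $f^{-1}\shM_Z=g^{-1}\shM_Z$, then associated log structure). Denote by $p_X\colon W\to X$ and $p_Y\colon W\to Y$ the projections. Composing the involutions with projections, the pair $(\tilde\iota_X\circ p_X,\,\tilde\iota_Y\circ p_Y)$ lands compatibly in $Z$ because
\[ f\circ\tilde\iota_X\circ p_X = \tilde\iota_Z\circ f\circ p_X = \tilde\iota_Z\circ g\circ p_Y = g\circ\tilde\iota_Y\circ p_Y,\]
using the commutativity of the original square. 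By the universal property of the log fiber product, this pair factors through a unique log morphism $\tilde\iota_W\colon W\to W$ satisfying $p_X\circ\tilde\iota_W=\tilde\iota_X\circ p_X$ and $p_Y\circ\tilde\iota_W=\tilde\iota_Y\circ p_Y$.

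The next step is to verify that $\tilde\iota_W$ is an involutive real structure. Squaring it gives a log endomorphism of $W$ over $Z$ satisfying $p_X\circ\tilde\iota_W^{\,2}=\tilde\iota_X^{\,2}\circ p_X=p_X$ and similarly for $p_Y$; by the uniqueness part of the universal property this forces $\tilde\iota_W^{\,2}=\mathrm{id}_W$. The underlying scheme morphism is $\iota_X\times_{\iota_Z}\iota_Y$ on complex points, which is anti-holomorphic because each factor is, so $(W,\tilde\iota_W)$ is a bona fide real log scheme lifting the real structure on $\underline{W}$.

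Finally, I would check the universal property in the category of real log schemes: given a real log scheme $(T,\tilde\iota_T)$ together with real log morphisms $a\colon T\to X$ and $b\colon T\to Y$ such that $f\circ a=g\circ b$, there is a unique log morphism $h\colon T\to W$ with $p_X\circ h=a$ and $p_Y\circ h=b$. Both $h\circ\tilde\iota_T$ and $\tilde\iota_W\circ h$ satisfy the defining equations (using $a\circ\tilde\iota_T=\tilde\iota_X\circ a$ and $b\circ\tilde\iota_T=\tilde\iota_Y\circ b$), so by uniqueness they coincide, showing $h$ is a morphism of real log schemes. The only potential obstacle here is bookkeeping: one must be careful that the involution $\iota_Z^\flat$ on $\shM_Z$ interacts correctly with the pushout forming $\shM_W$, but this is formal once the pullback of log structures and the passage to the associated log structure are known to be functorial, so no new ingredient beyond Kato's existence result is required.
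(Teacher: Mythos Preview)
Your argument is correct and is the standard categorical one: form the fibered product in log schemes, then induce the involution from $\tilde\iota_X\times_{\tilde\iota_Z}\tilde\iota_Y$ via the universal property, and verify it squares to the identity and that the universal property in the real category follows by the same uniqueness. The paper does not give a proof of this proposition at all; it simply quotes it as \cite[Proposition~1.9]{AS}. So there is nothing to compare at the level of argument---you have supplied what the paper outsources.

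Two small remarks on presentation. First, your parenthetical description of $\underline W$ as $\underline X\times_{\underline Z}\underline Y$ is only literally correct in the category of (all) log schemes; in the fine category one may have to integralize, which can shrink the underlying scheme. This does not affect your proof, since you only ever invoke the universal property, but it would be cleaner to drop the explicit description and just say ``let $W$ be the fibered product in (fine) log schemes''. Second, your closing worry about $\iota_Z^\flat$ interacting with the pushout is unnecessary for the same reason: nothing in the argument touches the construction of $\shM_W$, only its universal property.
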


Real morphisms of real log schemes are defined as follows.

\begin{definition}
\label{Def: Category of real log schemes}
Let $(X,\M_X)$ and $(Y,\M_Y)$ be real log schemes. A morphism
$f:(X,\M_X)\to (Y,\M_Y)$ of real log schemes is called \emph{real} if the
following diagram is commutative.
\[\begin{CD}
f^{-1}\iota_Y^{-1}\M_Y @>{f^{-1}\iota^\flat_Y}>> f^{-1}\M_Y\\
@V{\iota_X^{-1}f^\flat}VV@VV{f^\flat}V\\
\iota_X^{-1}\M_X@>{\iota^\flat_X}>>\M_X.
\end{CD}\]
Here the left-hand vertical arrow uses the identification
$\iota_Y\circ f= f\circ\iota_X$.
\end{definition}

Given a toric degeneration of toric varieties 
$\pi \colon \mathcal{X} \rightarrow \A^1$, $\pi$ is naturally a 
real log morphism. Similarly, its restriction to the central fiber 
$X_0 \rightarrow O_0$ is naturally a real log morphism.

\begin{definition}
\label{log map}
Let $X \rightarrow B$ be a real log morphism between real log schemes. 
A \emph{real $\ell$-marked stable log map with target $X \rightarrow B$} is a commutative diagram of real log schemes
\[\begin{CD}
C @>{f}>> X\\
@V{\pi}VV@VV{}V\\
W@>{}>>B\,,
\end{CD}\]
together with a tuple of real sections $\mathbf{x} = (x_1,\dots,x_{\ell})$
of $\pi$, such that the underlying diagram and sections of log schemes over 
$\C$ define a stable log map (in the sense of Definition \ref{logCurve}).
\end{definition}

\begin{remark}
In Definition \ref{log map}, we assume that the sections defining the marked points
are real. This will be enough for the purposes of the present paper.
A more general definition should allow pairs of complex conjugated marked 
points.
\end{remark}

\section{Counts of maximally degenerate real curves}
\label{Sec. counts of md curves}

In this section, we study stable maps to the central fiber $X_0$
of a toric degeneration.
In \S\ref{Sec:prelog}, we review following
\cite[\S 4]{NS} pre-log stable maps to $X_0$ and the associated tropical curve.
In \S \ref{Sec:md_real_stable_maps}, we introduce the notion of
maximally degenerate real stable map to $X_0$.
In \S \ref{Sec:count_md_stable_maps}, we prove the main result of this
section, Theorem \ref{Thm: counts of prelog curves}, 
counting the number of ways to lift a tropical curve in $M_{\R}$ to a maximally degenerate real stable map to $X_0$.

\subsection{Pre-log stable maps and tropical curves}
\label{Sec:prelog}
Following \cite[Defn 4.1]{NS}, we first recall the notion of torically transverse curve in a toric variety. 

\begin{definition}\label{Def: torically transverse}
Let $X$ be a $n$-dimensional toric variety.  An algebraic curve $C \subset X$ is called a \emph{torically transverse
curve} if it is disjoint from all toric strata of codimension greater than $1$. 
A stable map $\varphi: C\to X$ over a scheme $S$ is a \emph{torically transverse stable map} if the following holds:
\begin{enumerate}
\item[(i)] For the restriction $\varphi_s$ of
$\varphi$ to every geometric point $s\to S$, 
denoting $\mathrm{Int} X$ the $n$-dimensional torus orbit of $X$,
$\varphi_s^{-1}(\mathrm{Int} X) \subset C_s$ is dense.
\item[(ii)] $\varphi_s(C_s)\subset X$ is a torically transverse curve.
\end{enumerate}
\end{definition}

Let $\mathscr{P}$ an integral polyhedral decomposition of $M_{\Q}$, and let 
$\pi \colon \mathcal{X} \rightarrow \A^1$ be the corresponding toric degeneration, with special fiber $X_0$ (see \S \ref{Sec: toric degenerations}).
The irreducible components $X_v$ of $X_0$ are indexed by the vertices $v$ of 
$\mathscr{P}$.

As in \cite[Defn.~4.3]{NS}, we next define pre-log stable maps to $X_0$.
They are stable maps to $X_0$ obtained by gluing together torically transverse curves in the various components of $X_0$ which satisfy a 
kissing condition along the double locus of $X_0$, 
where exactly two irreducible components of $X_0$ intersect.
\begin{definition}
\label{Def: prelog}
A stable map $\underline{\varphi}_0 \colon C_0 \to X_0$ is called a \emph{pre-log stable map} if the following holds.
\begin{enumerate}
\item[(i)] For every $v$, the restriction $C_0\times_{X_0}X_v \to
X_v$ is torically transverse.
\item[(ii)] If $P\in C_0$ maps to the singular locus of $X_0$, then $C_0$
has a node at $P$, and $\underline\varphi_0$ maps the two branches $(C_0',P)$,
$(C_0'',P)$ of $C_0$ at $P$ to different irreducible components
$X_{v'}$, $X_{v''}\subset X_0$. Moreover, if $w'$ is the intersection
index with the toric boundary $D'\subset X_{v'}$ of the restriction
$(C_0',P)\to (X_{v'},D')$, and $w''$ accordingly for $(C_0'',P)\to
(X_{v''},D'')$, then $w'=w''$. This condition is referred to as the kissing condition at a nodal point $P$. 
\end{enumerate}
\end{definition}

\begin{figure} 
\center\scalebox{0.6}{\input{TropP1s.pspdftex}}
\caption{ The associated tropical curve to the map $\underline{\varphi}_0$ in Figure \ref{Fig: 2P1}.}
\label{Fig: TropP1q}
\end{figure}

To every pre-log stable map $\underline\varphi_0$, there is an associated tropical curve $h \colon \Gamma \lra M_{\RR}$, 
contained in the one-skeleton of 
$\mathscr{P}$, and constructed as the \emph{dual intersection
graph} of $\underline{\varphi}_0$. The details of this construction which we summarise below can be found in \cite[Construction 4.4]{NS}. 

\begin{construction}\label{Constr. tropical curve associated to prelog curve}
Let $\underline{\varphi}_0 \colon C_0 \rightarrow X_0$ be a pre-log stable map. 
We first define an open graph $\tilde \Gamma$. 
Two irreducible components of $C_0$ are called
\emph{indistinguishable} if they intersect in a node \emph{not}
mapping to the singular locus of $X_0$ \footnote{In the following sections we focus attention on what we call maximally degenerate curves, which in particular have no indistinguishable components.}.  Define $\tilde\Gamma$ to be the graph whose set of vertices equals the quotient of the set of
irreducible components of $C$ modulo identification of
indistinguishable ones. Any nodal point $P_E\in C_0$ corresponds to a bounded edge $E$ of $\tilde\Gamma$. 
To define the set of unbounded edges, first set $D\subset X_0$ to be the union of
toric prime divisors of the $X_v$ \emph{not} contained in the
singular locus of $X_0$. Then the set of unbounded edges is
$\underline{\varphi}_0^{-1}(D)$. 
An unbounded edge $E$ labelled by $Q_E\in
\underline{\varphi}_0^{-1}(D)$ attaches to $V\in \tilde\Gamma^{[0]}$ if $Q_E\in
C_V$.  Now, we define the map
\[ h \colon \tilde\Gamma \to M_{\RR}  \]
as follows. First note that by Definition \ref{Def: prelog}, if $C_V\subset C$ denotes the irreducible
component indexed by a vertex $V$ then $h(V)=v$ for the unique
$v\in\mathscr{P}^{[0]}$ with $\underline{\varphi}_0(C_V)\subset X_v$. 
Under $h$ a bounded edge $E$ corresponding to a nodal point $P_E$ maps to the line segment joining $h(V')$, $h(V'')$ if $P_E\in C_{V'}\cap C_{V''}$. 
To determine the images of unbounded edges of $\tilde \Gamma$ under $h$, 
let $E$ be an unbounded edge labelled by $Q_E \in \underline{\varphi}_0^{-1}(D)$. 
Assume for a $1$-cell $e\in \Sigma^{[1]}$ of the toric fan of $X$, $D_e\subset X_{h(V)}$ is the corresponding toric divisor with $\varphi(Q_E)\subset D_e$. 
Then, $h$ maps $E$ homeomorphically to $h(V)+e\subset M_\QQ$.
Finally define the weights
of the edges of $\tilde{\Gamma}$ adjacent to a vertex $V$ by the intersection numbers of
$\underline{\varphi}_0 |_{C_V}$ with the toric prime divisors of $X_{h(V)}$. This is
well-defined by the definition of a pre-log stable map.
While $\tilde\Gamma$ may have divalent vertices the kissing condition in the definition of a pre-log curve assures that the two weights at
such a vertex agree. We may thus remove any divalent vertex and
join the adjacent edges into a single edge. The resulting weighted
open graph $\Gamma$ has the same topological realization as
$\tilde\Gamma$ and hence $h$ can be interpreted as a map 
\[h \colon \Gamma\to  M_\QQ \,.\] 

Let $Y$ be a proper toric variety and let $u_1,\ldots, u_k\in M$
be the primitive generators of the rays of the toric fan of $Y$.
If $\varphi \colon C \to Y$ is a torically transverse stable map, we have by 
\cite[Prop 4.2]{NS} that 
\begin{equation}
\label{Eq: balancing}
    \sum_{i=1}^k w_i u_i=0 ~ \mathrm{for} ~
w_i=\deg \varphi^*(D_i)
\end{equation}
the intersection number of $C$ with the
toric divisor corresponding to $u_i$. 
This result ensures that $h$ satisfies the balancing condition
\eqref{Eq:balancing_condition} at each of its vertices, and hence is a tropical curve.
\end{construction}

\begin{figure} 
\center\scalebox{1.0}{\input{1to7.pspdftex}}
\caption{A maximally degenerate stable map $\underline{\varphi}_0:C_0\to X_0$,
where $X_0$ is the union of $7$ copies of $\PP^2$
and one copy of $\PP^1\times \PP^1$, and
where each irreducible component of $X_0$ is identified with its moment map image.}
\label{Fig: P1}
\end{figure}

\begin{figure} 
\center\scalebox{0.8}{\input{Trop17.pspdftex}}
\caption{The tropical curve associated to the maximally degenerate stable map in Figure \ref{Fig: P1}.}
\label{Fig: Trop17}
\end{figure}

\begin{example}
Figure \ref{Fig: 2P1} discussed in Example \ref{Ex: 2P1} represents a maximally degenerate stable map to the special fiber $X_0$
of the toric degeneration of Example \ref{Ex: 2P113}. We illustrate 
in Figure \ref{Fig: TropP1q} the associated tropical curve given by 
Construction \ref{Constr. tropical curve associated to prelog curve}.
\end{example}

\begin{example}
In Figure \ref{Fig: P1}, we illustrate a maximally degenerate stable map $\underline{\varphi}_0 \colon C_0 \rightarrow X_0$
to the special fiber $X_0$ of the degeneration of $\PP^2$
defined by the polyhedral decomposition described in Example 
\ref{ex_polyhedral_decomposition}. The special fibers $X_0$ is the union of $7$ copies of $\PP^2$ and of one copy of $\PP^1 \times \PP^1$. The curve $C_0$ has $9$ irreducible components, 
all isomorphic to $\PP^1$. The $7$ components of $C_0$ labeled $1$ to $7$ are mapped by $\underline{\varphi}_0$ to the $7$ copies of $\PP^2$ in $X_0$, whereas the two components of $C_0$
drawn in red are mapped to the same component $\PP^1\times \PP^1$ and their images intersect, creating a node in the image curve $\underline{\varphi}_0(C_0)$ contained in the smooth locus of $X_0$. The dual graph $\tilde{\Gamma}$ of $C_0$ constructed in Construction \ref{Constr. tropical curve associated to prelog curve} is represented on the left of Figure \ref{Fig: Trop17}. The graph $\Gamma$ is obtained from $\tilde{\Gamma}$ by removing the two red bivalent vertices corresponding to the two red irreducible components of $C_0$.
Note that the image of the tropical curve 
$h \colon \tilde{\Gamma} \rightarrow M_\RR$ represented on the right of Figure \ref{Fig: Trop17} is exactly the one-skeleton of the polyhedral decomposition in Figure \ref{Fig: P}.
\end{example}

\subsection{Maximally degenerate real stable maps}
\label{Sec:md_real_stable_maps}

Following \cite[Definition 5.1]{NS} in the complex case, we define 
\emph{real lines} and we prove some of their basic properties. 

\begin{definition}\label{def lines}
Let $X$ be a complete toric variety and $D\subset X$ the toric
boundary, that is, the union of toric divisors of $X$. A \emph{line} on $X$ is a non-constant, torically
transverse map $\varphi: \mathbb{P}^1\to X$ such that for every irreducible component $D_j$ of $D$, 
we have $\sharp \varphi^{-1}(D_j) \leq 1$, and there are at most $3$ irreducible components $D_j$ of $D$ with 
$\sharp \varphi^{-1}(D_j) \neq 0$.
We call a line \emph{real} if $\varphi$ is a real map, where both 
$\PP^1$ and $X$ are equipped with their standard real structure.
\end{definition}

\begin{remark}
We follow Definition 5.1 of \cite{NS} for the use of the terminology ``line" in Definition \ref{def lines}. However, this terminology might be misleading: one should note for example that a linear embedding of $\PP^1$ in $\PP^n$
is not a line in the sense of Definition \ref{def lines} if 
$n \geq 3$. 
\end{remark}

\begin{lemma}\label{lem_real_intersection}
Let $\varphi \colon \PP^1 \to X$ be a real line as in Definition \ref{def lines}. 
Then each intersection point of $\varphi(\PP^1)$
with the toric boundary of $X$ is real.
\end{lemma}

\begin{proof}
As $\varphi \colon \PP^1 \to X$ is a real map, the intersection of 
$\varphi(\PP^1)$ with each toric divisor consists of some real points and some pairs of complex conjugated points. 
By definition of a line, the intersection of $\varphi(\PP^1)$ with a toric divisor is either empty or consists of a single point. 
Therefore, each intersection point of $\varphi(\PP^1)$ with a toric divisor is necessarily real.
\end{proof}

Following \cite{NS}, if the intersection of the image of a line with the toric boundary $D$ consists of two points, 
we call it a \emph{divalent line}, and if it consists of three points we call it a \emph{trivalent line}. 
Note that since a line is torically transverse by definition, the associated tropical curve satisfies the balancing condition \eqref{Eq: balancing}.
For a line $\varphi \colon \PP^1 \rightarrow X$, let
$u_i \in M$ be the primitive generators of the rays
corresponding to the divisors of $X$ being intersected, and let $w_i$ be the intersection numbers
with $\varphi$. Writing $(\mathbf{u},\mathbf{w})
=((u_i)_i,(w_i)_i)$, we say that the line $\varphi$
is of type $(\mathbf{u},\mathbf{w})$.

\begin{lemma}
\label{Lem: torsor}
Let $X$ be a toric variety of dimension $n$ and $a\in\{2,3\}$.
Let
\[(\mathbf u, \mathbf w)=((u_i)_{1 \leq i \leq a},(w_i)_{1 \leq i \leq a})\in M^a\times(\NN\setminus\{0\})^a\]
with $u_i$ primitive and $\sum_{i=1}^a w_i u_i=0$.
Denote by $\LL_{(\mathbf u, \mathbf w)}^{\RR}$
the moduli space of real lines whose type is given by $(\mathbf u, \mathbf w)$. 
There is a transitive action of $M \otimes_{\ZZ} \R^{\times}$ on $\LL_{(\mathbf u, \mathbf w)}^{\RR}$. 
Moreover, in the trivalent case this action is simply transitive,
while in the divalent case the action factors over a simply
transitive action of $(M/\ZZ u_1) \otimes_{\ZZ} \R^\times= (M/\ZZ u_2) \otimes_{\ZZ} \R^\times$.
\end{lemma}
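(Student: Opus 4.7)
The plan is to reduce to the corresponding complex statement for lines in \cite{NS} and then descend from $M\otimes_{\ZZ}\CC^\times$ to $M\otimes_{\ZZ}\RR^\times$ by a Hilbert~90 style argument, namely surjectivity of the map $\CC^\times\to S^1$, $k\mapsto k/\bar k$, whose kernel is $\RR^\times$. Well-definedness of the action is immediate: the toric action of $M\otimes_{\ZZ}\CC^\times$ preserves the type of a line, and its real subgroup commutes with the standard real structures on $X$ and on $\PP^1$, so sends real lines to real lines.

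For transitivity, I would take $\varphi,\varphi'\in\LL^{\RR}_{(\mathbf u,\mathbf w)}$. The complex analogue furnishes $g\in M\otimes_{\ZZ}\CC^\times$ with $g\cdot\varphi=\varphi'$, unique in the trivalent case and unique up to multiplication by $\ZZ u_1\otimes_{\ZZ}\CC^\times$ in the divalent case. Applying the real structure, $\bar g$ also carries $\varphi$ to $\varphi'$, so $h\coloneqq g^{-1}\bar g$ lies in the complex stabilizer of $\varphi$. In the trivalent case this stabilizer is trivial, which forces $g\in M\otimes_{\ZZ}\RR^\times$ and simultaneously gives simple transitivity. In the divalent case $h$ lives in $\ZZ u_1\otimes_{\ZZ}\CC^\times\cong\CC^\times$ and satisfies $h\bar h=1$, so $h\in S^1$; surjectivity of $k\mapsto k/\bar k$ then produces $k\in\ZZ u_1\otimes_{\ZZ}\CC^\times$ with $k/\bar k=h$, and a direct check shows that $gk$ is real and still sends $\varphi$ to $\varphi'$.

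For the stabilizer analysis in the divalent case, the real stabilizer of $\varphi$ is $(\ZZ u_1\otimes_{\ZZ}\CC^\times)\cap(M\otimes_{\ZZ}\RR^\times)=\ZZ u_1\otimes_{\ZZ}\RR^\times$; since $u_1$ is primitive, $M/\ZZ u_1$ is free, so the short exact sequence
\[ 0\to\ZZ u_1\otimes_{\ZZ}\RR^\times\to M\otimes_{\ZZ}\RR^\times\to(M/\ZZ u_1)\otimes_{\ZZ}\RR^\times\to 0 \]
splits, and the action descends to a simply transitive action of $(M/\ZZ u_1)\otimes_{\ZZ}\RR^\times$. The identification with $(M/\ZZ u_2)\otimes_{\ZZ}\RR^\times$ follows from the balancing relation $w_1u_1+w_2u_2=0$, which together with primitivity of $u_1,u_2$ forces $u_2=-u_1$.

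The step I expect to be the main obstacle is the divalent case: the complex element $g$ coming from the complex analogue need not be real, and must be corrected by a suitable stabilizer element. This correction exists precisely because of the surjectivity of $k\mapsto k/\bar k$, which is a manifestation of Hilbert~90 for the extension $\CC/\RR$; once this is in hand, the remaining bookkeeping with the real stabilizer is routine.
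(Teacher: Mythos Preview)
Your proof is correct and takes a genuinely different route from the paper. The paper's argument is constructive: after a toric reduction to $\dim X\le 2$, it fixes a real coordinate $y$ on $\PP^1$ with $y(q_1)=-1$, $y(q_2)=0$, $y(q_3)=1$, writes $\varphi^*(z^n)=\chi_\varphi(n)\prod_i(y-y(q_i))^{(w_iu_i,n)}$, and observes that $\varphi\mapsto\chi_\varphi$ is an explicit bijection between (real) lines of the given type and (real-valued) characters of $N$, i.e.\ with $M\otimes\CC^\times$ (resp.\ $M\otimes\RR^\times$); the divalent case is dispatched by reducing to degree-$w$ covers $\PP^1\to\PP^1$ totally ramified over $0,\infty$. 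Your argument instead treats the complex statement from \cite{NS} as a black box and descends by Galois: the cocycle $h=g^{-1}\bar g$ lands in the complex stabilizer, trivially in the trivalent case and in $\ZZ u_1\otimes\CC^\times\cap S^1$ in the divalent case, where Hilbert~90 for $\CC/\RR$ produces the correcting element. The paper's approach yields an explicit parametrization that is reused later (Propositions~\ref{prop_nodes} and~\ref{Prop: all interior points give elliptic nodes}); your approach is cleaner, avoids choosing coordinates on $\PP^1$, and would adapt verbatim to other real forms or Galois settings.
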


\begin{proof}
Let $S$ be the one- or two-dimensional toric variety defined by the complete fan
with rays $\mathbb{Q}u_i$. Then up to a toric birational transformation $X$ is a product $S \times Y$ with $Y$ a
complete toric variety, so that the composition of any line $\mathbb{P}^1 \to S \times Y$ 
with the projection to the second factor is constant by \cite[Lemma 5.2]{NS}. 
Therefore, it suffices to consider the case where $\dim X \leq 2$. 

In the divalent case, we can assume $\dim X=1$, and there is only one isomorphism class of stable real maps 
$\mathbb{P}^1 \to X$ that is totally branched over $0$ and $\infty$. 

In the trivalent case, we can assume that $X=S$ is the complete toric surface whose toric fan is given by the rays 
$\mathbb{Q} u_i$. Let $D_1$, $D_2$, and $D_3$
be the toric divisors of $S$ respectively dual to the rays 
$\mathbb{Q} u_1$, $\mathbb{Q} u_2$ and $\mathbb{Q} u_3$.
Let $\varphi \colon \PP^1 \rightarrow S$ be a line in $S$ of type 
$(\mathbf u, \mathbf w)$ and let 
$q_1$, $q_2$, and $q_3$ be the points of $\PP^1$ respectively mapped by 
$\varphi$ on $D_1$, $D_2$, and $D_3$. Let $y$ be the unique coordinate on 
$\PP^1$ such that $y(q_1)=-1$, $y(q_2)=0$, and $y(q_3)=1$. For every 
$n \in N$, the monomial $z^n$ is a rational function on $S$ with vanishing order 
$(u_i,m)$ along $D_i$, where $(-,-)$ denote the natural duality pairing between 
$M$ and $N$. It follows that the rational function 
$\varphi^{*}(z^n)$ on $\PP^1$ is necessarily of the form 
\[ \varphi^{*}(z^n)=\chi_\varphi(n) \prod_{i=1}^3 (y-y(q_i))^{(w_i u_i,n)}\,,\]
for some $\chi_\varphi(n) \in \C^{\times}$. As $z^{n+n'}=z^n \cdot z^{n'}$ for every 
$n, n' \in N$, the map $\chi_\varphi \colon n \mapsto \chi_\varphi(n)$ is a character of $N$.
Therefore, the map $\varphi \mapsto \chi_\varphi$ identifies the set of lines in $S$
of type $(\mathbf u, \mathbf w)$ with the set $\Hom(N,\C^{\times})=M \otimes \C^{\times}$
of characters of $N$, which is obviously a torsor under $M \otimes \C^{\times}$.

A line $\varphi \colon \PP^1 \rightarrow S$ is real if and only if $\chi_\varphi(n) \in \R^\times$ 
for every $n \in N$. Therefore, the map $\varphi \mapsto \chi_\varphi$ identifies the set of real lines in $S$
of type $(\mathbf u, \mathbf w)$ with the set $\Hom(N,\R^{\times})=M \otimes \R^{\times}$
of real-valued characters of $N$, which is obviously a torsor under $M \otimes \R^{\times}$.
\end{proof}

Let $\mathscr{P}$ an integral polyhedral decomposition of $M_{\Q}$, and let 
$\pi \colon \mathcal{X} \rightarrow \A^1$ be the corresponding toric degeneration, with special fiber $X_0$.
The irreducible components $X_v$ of $X_0$ are indexed by the vertices of 
$\mathscr{P}$. The notion of maximally degenerate stable map is formulated in 
\cite[Definition 5.6]{NS}.

\begin{definition}
\label{Def: maximally degenerate}
Let $\underline{\varphi}_0 \colon C_0 \to X_0$ be a pre-log stable map. 
If for every $v\in\mathscr{P}^{[0]}$ the
projection $C_0\times_{X_0}X_v \to X_v$ is a line, or, for $n=2$, the
disjoint union of two divalent lines intersecting disjoint toric
divisors, then $\underline{\varphi}_0$ is called \emph{maximally degenerate}.
\end{definition}
\begin{figure} 
\center{\input{MD.pspdftex}}
\caption{A maximally degenerate stable map $\underline{\varphi}_0:C_0\to X_0\cong\prod_4\PP^2$, 
where each $\PP^2$ is identified with its moment map image, and the associated tropical curve.}
\label{Fig: MD}
\end{figure}

We refer to Figure \ref{Fig: MD} for an illustration of a maximally degenerate stable map.

The following Lemma ensures that the two natural ways to define a 
\emph{maximally degenerate real stable map}, either as a real stable map which happens to be maximally degenerate, 
or as a real stable map obtained from gluing real lines, agree.

\begin{lemma}
Let $\underline{\varphi}_0 \colon C_0 \rightarrow X_0$ be a real stable map which is maximally degenerate. 
Then, for every $v \in \mathscr{P}^{[0]}$, 
the
projection $C_0\times_{X_0}X_v \to X_v$ is a real line, or, for $n=2$, the
disjoint union of two divalent real lines intersecting disjoint toric
divisors.
\end{lemma}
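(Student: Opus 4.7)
The plan is to transfer the real structure from $X_0$ down to each fiber component of the source, and then to exploit the abundance of fixed points that the toric divisor intersections force on each rational source curve.

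First, I would observe that $\iota_{X_0}$ is the restriction of the standard real structure on the toric total space $\mathcal{X}$, so it preserves every toric stratum. In particular, it preserves each irreducible component $X_v$ setwise and acts on it as the standard real structure on the toric variety $X_v$. Since $\underline{\varphi}_0$ is real, the preimage $\tilde{C}_v \coloneqq C_0 \times_{X_0} X_v$ is preserved by $\iota_{C_0}$, and the restricted map $\varphi_0|_{\tilde{C}_v} \colon \tilde{C}_v \to X_v$ is real for the induced involution $\iota_{\tilde{C}_v}$.

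Next, in the case where $\tilde{C}_v \simeq \PP^1$ is a trivalent or divalent line, I would produce enough fixed points on $\PP^1$ to force the standard real structure. Each toric divisor $D_i$ of $X_v$ is preserved setwise by $\iota_{X_v}$, and by definition of a line the intersection $\varphi_0(\tilde{C}_v) \cap D_i$ consists of a single point $p_i$, which is therefore $\iota_{X_v}$-fixed. Realness of $\varphi_0|_{\tilde{C}_v}$ then forces the unique set-theoretic preimage $q_i \in \tilde{C}_v$ of $p_i$ to satisfy $\iota_{\tilde{C}_v}(q_i) = q_i$. Any anti-holomorphic involution of $\PP^1$ with a fixed point is conjugate to standard complex conjugation, so $\iota_{\tilde{C}_v}$ is the standard real structure and $\varphi_0|_{\tilde{C}_v}$ is a real line in the sense of Definition \ref{def lines}.

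The remaining case is the divalent pair in dimension $n=2$, where $\tilde{C}_v = L_1 \sqcup L_2$ with each $L_k$ meeting two toric divisors and the two pairs of divisors disjoint. The main subtlety, and the one step I expect to require some care, is ruling out the possibility that $\iota_{C_0}$ swaps $L_1$ and $L_2$. If it did, then since each $D_i$ is preserved by $\iota_{X_v}$, a point of $L_1 \cap \varphi_0^{-1}(D_i)$ would map under $\iota_{C_0}$ to a point of $L_2$ whose image lies on $D_i$, contradicting the assumption that the divisor supports of $L_1$ and $L_2$ are disjoint. Thus $\iota_{C_0}$ preserves each $L_k$, and the argument of the previous paragraph applies separately to each.
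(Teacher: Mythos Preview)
Your proof is correct and follows essentially the same approach as the paper's: both argue that $\iota_{X_0}$ preserves each $X_v$, deduce that $\tilde{C}_v$ is $\iota_{C_0}$-invariant, and in the two-line case use the disjointness of the toric divisor supports to rule out swapping. Your version is in fact more careful than the paper's on one point: the paper simply asserts that an $\iota_{C_0}$-invariant line is a real line, whereas you justify that the induced anti-holomorphic involution on $\PP^1$ is the standard one by producing fixed points (the preimages of the toric divisor intersections), which is what Definition~\ref{def lines} strictly requires.
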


\begin{proof}
Let $\iota_{C_0}$ and $\iota_{X_0}$ be the real structures on $C_0$ and $X_0$. 
As $X_0$ is endowed with the real structure induced by the standard toric real structure on $\mathcal{X}$, 
we have $\iota_{X_0}(X_v)=X_v$ for every $v \in \mathscr{P}^{[0]}$.
As $\underline{\varphi}_0$ is a real map, it follows that for every $v \in 
\mathscr{P}^{[0]}$ we have $\iota_{C_0}(C_0 \times_{X_0} X_v)
= C_0 \times_{X_0} X_v$. Therefore, when $C_0 \times_{X_0} X_v$ is a line, we deduce that $C_0 \times_{X_0} X_v$ is a real line.
When $C_0 \times_{X_0} X_v$ is a disjoint union of two divalent lines intersecting disjoint toric divisors, 
then each of these two lines is also real because
the restriction of 
$\iota_{X_0}$ to $X_v$ preserves the toric divisors of $X_v$.
\end{proof}

\subsection{Counts of maximally degenerate real stable maps}
\label{Sec:count_md_stable_maps}

Let $g \in \NN$, 
$\Delta \colon M \setminus \{0\} \rightarrow \NN$ with finite support, and
$\mathbf{A} = (A_1,\ldots,A_\ell)$ an affine constraint that is general for $(g,\Delta)$ in the sense of Definition \ref{Def: general affine}. 
Recall from Proposition \ref{prop_finite_tropical} that the set 
$\T_{g,\ell,\Delta}(\mathbf{A})$ of $\ell$-marked genus $g$ tropical curves of
degree $\Delta$ matching $\mathbf{A}$ is finite. 
Let $\mathscr{P}$ be a polyhedral decomposition of $M_{\Q}$
which is good for $(g,\Delta,\mathbf{A})$ in the sense of 
Definition \ref{Def: good decomposition}.

Let $\pi \colon \mathcal{X} \rightarrow \A^1$ be the toric degeneration 
defined by $\mathscr{P}$ (see \S \ref{Sec: toric degenerations}).
Let $X_t \coloneqq \pi^{-1}(t)$ be the fiber of $\pi$ over 
$t \in \A^1$. In particular, we denote by $X_0$ the central fiber of 
$\pi$. We fix $\mathbf{P}=(P_1,\ldots,P_\ell)$ a $\ell$-tuple of real points in the $n$-dimensional torus orbit of $X=X_1$. 
By \eqref{Eq:tilde_Z}-\eqref{Eq:tilde_Z_0}, the affine subspaces 
$A_j$ and the points $P_j$
define incidence conditions 
\[\mathcal{Z}_{A_j,P_j} \subset \mathcal{X}\]
and 
\[Z_{A_j,P_j,t} = \mathcal{Z}_{A_j,P_j} \cap X_t \subset X_t\,.\] 
We denote 
$\mathbf{\mathcal{Z}}_{\mathbf{A},\mathbf{P}}
\coloneqq (\mathcal{Z}_{A_1,P_1}, \ldots, \mathcal{Z}_{A_\ell,P_\ell})$
and 
$\mathbf{Z}_{\mathbf{A},\mathbf{P},t} 
\coloneqq (Z_{A_1,P_1,t},\ldots,Z_{A_{\ell},P_{\ell},t})$. In particular, 
$\mathbf{Z}_{\mathbf{A},\mathbf{P},0}$ is a tuple of incidence conditions in the 
central fiber $X_0$.
Because the points $P_j$ are taken to be real, the subvarieties 
$\mathcal{Z}_{A_j,P_j}$ are real subvarieties of 
$\mathcal{X}$, and for $t \in \A^1(\RR)$, 
$Z_{A_j,P_j,t}$ is a real subvariety of $X_t$.

Let $(\Gamma,\mathbf{E},h) \in \T_{g,\ell,\Delta}(\mathbf{A})$.
We will study maximally degenerate real stable maps 
$\underline{\varphi}_0 \colon C_0 \rightarrow X_0$
matching the incidence conditions $\mathbf{Z}_{\mathbf{A},\mathbf{P},0}$ 
and
with associated tropical curve $(\Gamma,\mathbf{E},h)$
(in the sense of Construction 
\ref{Constr. tropical curve associated to prelog curve}).

It is shown in \cite[Prop 5.7]{NS} that the number of maximally degenerate
stable maps to $X_0$ matching the incidence conditions $\mathbf{Z}_{\mathbf{A},\mathbf{P},0}$ and
with associated tropical curve $(\Gamma, \mathbf{E},h)$  
equals the lattice index of
the map of lattices 
\begin{eqnarray}
\label{eq:NS map}
\,\,\,\,\,\,\,\,\,
\mathcal{T}_h \colon \mathrm{Hom}(\Gamma^{[0]},M)
& \longrightarrow &
\prod_{E\in \Gamma^{[1]} \setminus \Gamma^{[1]}_\infty} M/\ZZ u_{(\partial^-E,E)} 
\times
\prod_{j=1}^{\ell} M/\big((\QQ u_{(\partial^-E_j,E_j)} + L(A_j))\cap M\big) \\
\nonumber
\phi  &  \longmapsto & \big((\phi(\partial^+E)-\phi(\partial^-E))_E,(\phi(\partial^- E_j))_j\big).
\end{eqnarray}
In \eqref{eq:NS map}, the quotient space $M/\ZZ u_{(\partial^-E,E)}$ can be viewed as a space of directions transverse to the direction $u_{(\partial^-E,E)}$, and the map 
$\phi \mapsto \phi(\partial^+ E)-\phi(\partial^-E)$
is therefore a measure of how far from $u_{(\partial^-E,E)}$
is the direction of the line segment connecting $\phi(\partial^+E)$ and $\phi(\partial^-E)$. Similarly, the quotient space 
$M/\big((\QQ u_{(\partial^-E_j,E_j)} + L(A_j))\cap M\big)$ can be viewed as a space of directions transverse to the subspace spanned by the direction of $u_{(\partial^-E_j,E_j)}$ and the directions parallel to $A_j$, and so the map $\phi \mapsto \phi(\partial^- E_j)$ is a measure of how far is the direction of $\phi(\partial^- E_j)$ from being contained in this subspace. 
Recall that the lattice index of a map of lattices is the order of its cokernel, 
which is also equal to the order of the kernel of the induced map obtained by tensoring both sides with $\CC^{\times}$. 
However, this is no longer true once restricting to $\RR^{\times}$. 
So, to obtain the analogue of this result for maximally degenerate real stable maps, we first need the following definition.

\begin{figure} 
\center{\input{GluingLines.pspdftex}}
\caption{The tropical curve in Figure \eqref{Fig: MD} obtained by gluing $4$ lines.}
\label{Fig: Gluing lines}
\end{figure}
\begin{definition}
\label{Def: real index}
Let $\Psi: M_1\to M_2$ be an inclusion of lattices of finite index. Let 
\begin{equation}
\label{Eq:Coker}
\mathrm{Coker}(\Psi)=\ZZ/_{(p_1)^{e_1}\ZZ} \times \ldots \times  \ZZ/_{(p_n)^{e_n}\ZZ} 
\end{equation}
be the primary decomposition of the free abelian group $\mathrm{Coker}(\Psi)$. We define the \emph{real index} of $\Psi$ as
\begin{equation}
\label{Eq: real index}
\mathcal{D}^{\RR}_{\Psi}\coloneqq 2^{\#  \{i~|~p_i=2   \}} 
\end{equation}
\end{definition}
\begin{lemma}
\label{Lem: kernel}
Let $\Psi: M_1\to M_2$ be an inclusion of lattices as in Definition \ref{Def: real index}, and let
\begin{equation}
\label{Eq: sequence tensered with R}    
    \Psi_{\RR}: M_1 \otimes_{\ZZ} \RR^{\times} \lra M_1 \otimes_{\ZZ} \RR^{\times}
\end{equation}
be the map obtained from $\Psi$ by tensoring with $\RR^{\times}$. Then, $\# \{ \mathrm{Ker} \Psi_{\RR} \} = \mathcal{D}^{\RR}_{\Psi}$. 
\end{lemma}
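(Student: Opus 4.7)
The plan is to reduce the statement to a direct computation via Smith normal form, since both sides of the claimed equality behave well under decomposing $\Psi$ into one-dimensional pieces.

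First I would apply Smith normal form: because $\Psi \colon M_1 \hookrightarrow M_2$ is an inclusion of lattices of the same rank $n$, there exist $\ZZ$-bases of $M_1$ and $M_2$ with respect to which $\Psi$ is represented by a diagonal matrix with positive integer entries $d_1, \ldots, d_n$ (the invariant factors). In these bases, tensoring with $\RR^{\times}$ gives the map
\[
\Psi_{\RR} \colon (\RR^{\times})^n \longrightarrow (\RR^{\times})^n,
\quad (x_1, \ldots, x_n) \longmapsto (x_1^{d_1}, \ldots, x_n^{d_n}).
\]
The kernel therefore splits as a product of the subgroups $\{x \in \RR^{\times} \mid x^{d_i} = 1\}$. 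The set of real solutions to $x^{d} = 1$ is $\{1\}$ if $d$ is odd and $\{\pm 1\}$ if $d$ is even, so
\[
\#\{\ker \Psi_\RR\} = 2^{\#\{i \,\mid\, d_i \text{ is even}\}}.
\]

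It then remains to match this exponent with the one in the definition of $\mathcal{D}^{\RR}_\Psi$. Since $\mathrm{Coker}(\Psi) \cong \bigoplus_{i} \ZZ/d_i\ZZ$ (omitting the factors with $d_i = 1$), and the primary decomposition of a cyclic group $\ZZ/d_i \ZZ$ with $d_i = 2^{a_i} m_i$ and $m_i$ odd contains a $2$-primary summand if and only if $a_i \geq 1$, the number of indices $i$ in the primary decomposition \eqref{Eq:Coker} with $p_i = 2$ equals the number of invariant factors $d_i$ that are even. Combined with the previous paragraph, this gives $\#\{\ker \Psi_\RR\} = 2^{\#\{i \,\mid\, p_i = 2\}} = \mathcal{D}^{\RR}_\Psi$.

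No serious obstacle is expected: once Smith normal form is invoked, the computation is essentially the observation that $\RR^{\times}$ has $2$-torsion $\{\pm 1\}$ and no odd torsion. The only point requiring mild care is the bookkeeping between the invariant factor decomposition (used to compute the kernel) and the primary decomposition (used to define $\mathcal{D}^{\RR}_\Psi$), which is a standard fact about finitely generated abelian groups.
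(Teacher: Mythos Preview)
Your proof is correct and complete. The approach differs from the paper's: the paper identifies $\ker(\Psi_\RR)$ with $\mathrm{Tor}_1(\RR^\times,\mathrm{Coker}(\Psi))$ by tensoring the free resolution $0\to M_1\to M_2\to\mathrm{Coker}(\Psi)\to 0$ with $\RR^\times$, then computes $\mathrm{Tor}_1(\RR^\times,\ZZ/m\ZZ)$ for each primary factor separately. Your argument bypasses the homological machinery by using Smith normal form to diagonalize $\Psi$ at the outset and reading off the kernel coordinatewise. The endpoint computation is the same --- the $m$-torsion in $\RR^\times$ is $\{\pm 1\}$ or trivial according to the parity of $m$ --- but your route is more elementary and self-contained, while the paper's Tor formulation makes the functoriality and additivity transparent without choosing bases. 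Both are short; neither has any real advantage beyond taste.
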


\begin{proof}
The proof follows from elementary homological algebra of abelian groups 
(see for example \cite[Chapter 3]{MR1269324}). Consider the short exact sequence
\[ 0 \lra M_1 \lra M_2 \lra \mathrm{Coker}(\Psi) \lra 0\,,  \]
where $\mathrm{Coker}(\Psi)$ is as in \eqref{Eq:Coker}. Since $M_1$ and $M_2$ are free abelian, 
this is a free resolution of $\mathrm{Coker}(\Psi)$, and tensoring it with $\RR^{\times}$, we obtain an exact sequence
\[ 0 \lra \mathrm{Tor}_1(\R^{\times},\mathrm{Coker}(\Psi)) \lra M_1 \otimes_{\ZZ} \RR^{\times} \overset{~~\Psi_{\RR}~~}{\longrightarrow} M_2 \otimes_{\ZZ} \RR^{\times} \lra 0 \]
and so 
\[\mathrm{Ker}(\Psi_{\RR})=\mathrm{Tor}_1(\R^{\times},\mathrm{Coker}(\Psi))\,.\]
By additivity of the $\mathrm{Tor}$ bifunctor, we have  \[\mathrm{Tor}_1(\R^{\times},\mathrm{Coker}(\Psi))
=\prod_{j=1}^{\ell} \mathrm{Tor}_1(\R^{\times},\ZZ/_{(p_j)^{e_j}\ZZ})\,.\]
For every positive integer $m$, the free resolution
\[  0 \lra \ZZ \overset{~~\cdot m~~}{\longrightarrow} \ZZ \lra \ZZ/_{n\ZZ} \lra 0 \] induces the short exact sequence
\[ 0 \lra \mathrm{Tor}_1(\RR^{\times},\ZZ/_{m\ZZ}) \lra \RR^{\times}  \lra \RR^{\times} \lra 0  \]
where the right hand side arrow is given by
\begin{eqnarray}
\nonumber
\RR^{\times} \lra \RR^{\times} \\
\nonumber
r \longmapsto r^{m}
\end{eqnarray}
Therefore, 
\[ \mathrm{Tor}_1(\RR^{\times},\ZZ/_{m\ZZ}) = \begin{cases} 
      0 & \mathrm{if~} m ~ \mathrm{is~odd} \\
      \ZZ/_{2 \ZZ} & \mathrm{if~} m ~ \mathrm{is~even}
   \end{cases}
\]
Hence, the result follows.
\end{proof}

Now we are ready to define the real analogue of the lattice index of
\cite[Prop 5.7]{NS}.

We denote by
\begin{eqnarray}
 \nonumber
M_{2,1}^{\tau_h} & \colon= &
\prod_{E\in \Gamma^{[1]} \setminus \Gamma^{[1]}_\infty} M/\ZZ u_{(\partial^-E,E)}  \\
\label{Eq: M22}
M_{2,2}^{\tau_h} & \colon= &
\prod_{j=1}^{\ell} M/\big((\QQ u_{(\partial^-E,E)} + L(A_j))\cap M\big)
\end{eqnarray}
the components of the target of the map $\mathcal{T}_h$ in \eqref{eq:NS map}. We 
denote by $Z_{A_j,P_j,0}(\RR)$ the real locus of $Z_{A_j,P_j,0}$.
We denote by
\[ Z_{j,\RR} \coloneq  Z_{A_j,P_j}^0(\RR) 
\cap (M \otimes_{\ZZ} \R^{\times})\]
the intersection of  $Z_{A_j,P_j,0}(\RR)$
with the real torus $M \otimes_{\Z} \R^{\times}$
of the toric component of $X_0$
containing $Z_{A_j,P_j,0}$, and 
\[  \mathbf{Z}_{\RR} \colon = (Z_{1,\RR},\ldots, Z_{j,\RR}) \subset \prod_{j=1}^{\ell} M\otimes_{\Z} \RR^{\times} \,.\]
We denote by 
\begin{equation}
    \label{Eq: [Z]}
[\mathbf{Z}_{\RR}]    
\end{equation}
the image in $M_{2,2}^{\tau_h} \otimes_{\Z} \RR^{\times}$ of $\mathbf{Z}_{\RR}$ by the quotient map 
\[ \prod_{j=1}^{\ell} M\otimes_{\Z} \RR^{\times} \lra  M_{2,2}^{\tau_h} \otimes_{\Z} \RR^{\times} \,.  \]
In fact, $[\mathbf{Z}_{\RR}]$ is a point in 
 $ M_{2,2}^{\tau_h} \otimes_{\Z} \RR^{\times}$, since the quotient mods out the linear directions of the affine constraints. 

\begin{definition}
\label{Def:twisted real index}
We define  $\sigma \coloneqq (0 \otimes 1, [\mathbf{Z}_{\RR}])
\in M_{2,1}^{\tau_h} \otimes_{\Z} \RR^{\times} \times  M_{2,2}^{\tau_h} \otimes_{\Z} \RR^{\times}$. 
We define the \emph{real index of $\mathcal{T}_h$ twisted by $\sigma$} as
\[ \mathcal{D}^{\RR}_{\mathcal{T}_h,\sigma} \colon = \begin{cases} 
       \mathcal{D}^{\RR}_{\mathcal{T}_h}, & \mathrm{if ~ the ~ image ~ of ~} \sigma \mathrm{~in~} 
\Coker(\mathcal{T}_h) \otimes_{\Z} \R^{\times} \mathrm{~ is~} 0 \otimes 1.
 \\
      0, & \mathrm{otherwise.} 
   \end{cases}
\]
 where $\mathcal{D}^{\RR}_{\mathcal{T}_h,\sigma}$ is the real lattice index of the map $\mathcal{T}_h$ in \eqref{eq:NS map}.
\end{definition} 
 
\begin{theorem}
\label{Thm: counts of prelog curves}
The number 
$N^{\RR-\mathrm{prelog}}_{\mathbf{Z}_{\mathbf{A},\mathbf{P},0},h}(X_0)$
of isomorphism classes of  
maximally degenerate real stable maps to $X_0$ matching the incidence conditions $\mathbf{Z}_{\mathbf{A},\mathbf{P},0}$
and with associated tropical curve $(\Gamma, \mathbf{E},h) 
\in \T_{g,\ell,\Delta}(\mathbf{A})$ is equal to 
the real lattice index twisted by $\sigma$ 
of the map $\mathcal{T}_h$ in \eqref{eq:NS map}, that is,   
\[N^{\RR-\mathrm{prelog}}_{\mathbf{Z}_{\mathbf{A},\mathbf{P},0},h}(X_0) =\mathcal{D}^{\RR}_{\mathcal{T}_h,\sigma}\,.\] 
\end{theorem}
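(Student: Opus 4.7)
The plan is to follow the strategy of \cite[Prop.~5.7]{NS}, working with real tori $M \otimes_\Z \R^\times$ in place of complex tori $M \otimes_\Z \C^\times$. A maximally degenerate real stable map $\underline{\varphi}_0 \colon C_0 \to X_0$ with associated tropical curve $(\Gamma, \mathbf{E}, h)$ decomposes, for each vertex $V \in \Gamma^{[0]}$, into a real line $\varphi_V \colon \PP^1 \to X_{h(V)}$ whose combinatorial type is prescribed by the flags of $\Gamma$ at $V$. These local pieces must satisfy two kinds of conditions: the kissing condition at each node corresponding to a bounded edge of $\Gamma$, and the incidence condition that the point $\varphi_V^{-1}(D_j)$ corresponding to a marked edge $E_j$ lies in $Z_{A_j, P_j, 0}$.

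First I would use Lemma \ref{Lem: torsor} to identify the space of real lines of fixed type in $X_{h(V)}$ with a torsor under $M \otimes_\Z \R^\times$ in the trivalent case, so that after choosing a basepoint in each torsor the global space of tuples $(\varphi_V)_V$ becomes $\mathrm{Hom}(\Gamma^{[0]}, M) \otimes_\Z \R^\times$, i.e.\ the source of $\mathcal{T}_h \otimes \R^\times$. Next I would translate each kissing condition at a bounded edge $E$ into a linear equation in $(M/\Z u_{(\partial^- E, E)}) \otimes_\Z \R^\times$, and each incidence condition at a marked edge $E_j$ into an affine equation in $(M/((\Q u_{(\partial^- E_j, E_j)} + L(A_j)) \cap M)) \otimes_\Z \R^\times$. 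The homogeneous parts of these equations assemble exactly into the two target factors of $\mathcal{T}_h \otimes \R^\times$ appearing in \eqref{eq:NS map}, while the right-hand side of the incidence equation is precisely the class $[\mathbf{Z}_\RR]$ from \eqref{Eq: [Z]}, because requiring the torus orbit traced by $\varphi_V$ through $\varphi_V^{-1}(D_j)$ to meet $Z_{A_j, P_j, 0}$ amounts, modulo $L(A_j)$, to fixing a prescribed point in $M \otimes_\Z \R^\times$.

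Combining these steps identifies the set of isomorphism classes of maximally degenerate real stable maps with the fiber $(\mathcal{T}_h \otimes \R^\times)^{-1}(\sigma)$ where $\sigma = (0 \otimes 1, [\mathbf{Z}_\RR])$. If the class of $\sigma$ in $\mathrm{Coker}(\mathcal{T}_h) \otimes_\Z \R^\times$ is nonzero, the fiber is empty and both sides of the desired identity vanish. Otherwise the fiber is a torsor under $\mathrm{Ker}(\mathcal{T}_h \otimes \R^\times)$, whose cardinality equals $\mathcal{D}^\RR_{\mathcal{T}_h}$ by Lemma \ref{Lem: kernel}; this matches Definition \ref{Def:twisted real index} in either case.

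The main obstacle I expect is the bookkeeping in the two-dimensional divalent exception of Definition \ref{Def: maximally degenerate}, where a component $X_v$ may carry a pair of divalent lines and the torsor structure from Lemma \ref{Lem: torsor} is only by $(M/\Z u_1) \otimes_\Z \R^\times$ rather than by the full $M \otimes_\Z \R^\times$: one must verify that this discrepancy is already absorbed into the definition of $\mathcal{T}_h$ through the quotients by $\Z u_{(\partial^- E, E)}$, exactly paralleling the complex argument of \cite[Prop.~5.7]{NS}. A second subtle point, invisible over $\C$ because $\C^\times$ is divisible but unavoidable over $\R$, is the normalisation of the line torsors so that the incidence equation produces $[\mathbf{Z}_\RR]$ on the nose rather than a translate; once this is pinned down, the clean splitting of the argument into an existence step (the $\sigma$-twist cokernel condition) and a counting step (the $\mathrm{Ker}$ computation via Lemma \ref{Lem: kernel}) is forced by the homological algebra of real tori.
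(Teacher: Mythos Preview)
Your proposal is correct and follows essentially the same approach as the paper: parametrise real lines as torsors via Lemma~\ref{Lem: torsor}, assemble the kissing and incidence conditions into the map $\mathcal{T}_h \otimes \R^\times$, identify the desired set with the fibre over $\sigma$, and count via Lemma~\ref{Lem: kernel}. The one point the paper makes more explicit than you do is that the stable map $C_0 \to X_0$ has components indexed by the vertices of the refined graph $\tilde{\Gamma}$ (obtained by subdividing $\Gamma$ at all points of $h^{-1}(\mathscr{P}^{[0]})\setminus\Gamma^{[0]}$), not of $\Gamma$ itself; the paper then shows that the extra bivalent factors in domain and target of the obstruction map cancel, reducing to $\mathcal{T}_h$ as written in \eqref{eq:NS map}. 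This is the real content behind the divalent bookkeeping you flag, and it is handled exactly as in \cite[Prop.~5.7]{NS}.
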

\begin{proof}
After tensoring with $\mathbb{Q}$,  the map of lattices $\mathcal{T}_h$ in \eqref{eq:NS map} agrees with the map
\begin{eqnarray}
\label{Eq: affine linear map}
\nonumber
\quad\quad\mathcal{T}_{h,\QQ}: \mathrm{Hom}(\Gamma^{[0]},M_\QQ)&\lra&
\prod_{E\in\Gamma^{[1]} \setminus \Gamma_\infty^{[1]}} M_\QQ/\QQ
u_{(\partial^-E,E)} \times \prod_{j=1}^{\ell} M_\QQ/ \big(\QQ u_{(\partial^-
E_j,E_j)}+L(A_j)\big),\\
h &\longmapsto& \big((h(\partial^+E)-h(\partial^-E))_E,
(h(\partial^-E_j))_j\big).\nonumber
\end{eqnarray}
Fixing a general constraint $\mathbf{A}$, we ensure by Corollary \cite[Corollary 2.5]{NS} 
that $\mathcal{T}_{h,\QQ}$ is an isomorphism, and hence has finite real index.
Analogously as in the proof of \cite[Prop 5.7]{NS}, we will obtain maximally degenerate real curves by gluing real lines. 
Denote by $\tilde \Gamma$ the graph obtained from $\Gamma$ by inserting vertices at all points 
$h^{-1}(\mathscr{P}^{[0]})\setminus \Gamma^{[0]}$ corresponding to the points of intersection of the image of 
$h$ with the polyhedral decomposition $\mathscr{P}$, as in \cite[Constr. 4.4]{NS} 
to ensure the connectivity of the image of $\underline{\varphi}_0$. 
It follows from Lemma \ref{Lem: torsor} that the moduli space 
$\prod_{V\in
\tilde\Gamma^{[0]}} \LL^{\RR}_{(\mathbf u(V), \mathbf w(V))}$ 
of real lines can be identified with the real torus
\[ \mathrm{Hom}(\Gamma^{[0]},M) \otimes_{\ZZ} \R^{\times} \times \prod_{V\in \tilde\Gamma^{[0]}
\setminus \Gamma^{[0]}} (M/\ZZ u_{(V,E^-(V))}) \otimes_{\ZZ} \R^{\times} \,,\]
where the first factor is given by the real lines corresponding to trivalent vertices 
and the second one is given by the real lines corresponding to divalent vertices of $\tilde{\Gamma}$.

On the other hand, to glue real lines, which generically lie inside the $n$-dimensional torus orbits of irreducible components of $X_0$ pairwise together, 
we want them to intersect along points at the double locus of $X_0$. So, given two the real lines $L^-$ and $L^+$, dual to the vertices 
$\partial^- E$ and $\partial^+ E$ of a bounded edge $E$, we want them to
intersect at a point on the toric variety $X_{h(E)}$ in real points 
$P^-$ and $P^+$. By  Lemma \ref{lem_real_intersection}, $P^+$ and $P^-$ lie in the real $(n-1)$-dimensional torus orbit 
of $X_{h(E)}$. As the real $(n-1)$-dimensional torus orbit of $X_{h(E)}$ can be identified with $M/\ZZ u_{(\partial^-E,E)} \otimes_{\Z} \RR^{\times}$, 
and so the difference between $P^+$ and $P^-$ can be viewed as an element of  $M/\ZZ u_{(\partial^-E,E)} \otimes_{\Z} \RR^{\times}$. 
So, consider the map of real tori 
\begin{equation}
 \label{Eq: obstructed map}
\mathrm{Hom}(\Gamma^{[0]},M) \otimes_{\ZZ} \R^{\times} \times \prod_{V\in \tilde\Gamma^{[0]}
\setminus \Gamma^{[0]}} (M/\ZZ u_{(V,E^-(V))}) \otimes_{\ZZ} \R^{\times}
\longrightarrow
\prod_{E\in \tilde{\Gamma}^{[1]} \setminus 
\tilde{\Gamma}^{[1]}_\infty}
M/\ZZ u_{(\partial^-E,E)} \otimes_{\Z} 
\RR^{\times}\,.
\end{equation}
obtained by tensoring with 
$\RR^{\times}$ the map of lattices
\begin{eqnarray}
\nonumber
\mathrm{Hom}(\Gamma^{[0]},M) 
\times \prod_{V\in \tilde\Gamma^{[0]}
\setminus \Gamma^{[0]}} M/\ZZ u_{(V,E^-(V))}
& \longrightarrow & 
\prod_{E\in \Gamma^{[1]} \setminus 
\Gamma^{[1]}_\infty}
M/\ZZ  u_{(\partial^-E,E)} \\
\nonumber
h & \longmapsto &  \big((h(\partial^+E)-h(\partial^-E))_E\big) \,.
\end{eqnarray}
The map \eqref{Eq: obstructed map} can be thought of assigning to a set of real lines the obstructions to their gluing. 
Therefore, the set of real maximally degenerate stable maps to 
$X_0$ with associated tropical curve 
$h \colon \Gamma \rightarrow M_{\RR}$ can be identified 
with the kernel of the map \eqref{Eq: obstructed map}.

It remains to impose the constraints
$\mathbf{Z}_{\mathbf{A},\mathbf{P},0}$. Consider the map of lattices
\begin{eqnarray}
\nonumber
\mathrm{Hom}(\Gamma^{[0]},M)
\times \prod_{V\in \tilde\Gamma^{[0]}
\setminus \Gamma^{[0]}} (M/\ZZ u_{(V,E^-(V))})
& \longrightarrow &
\prod_{j=1}^{\ell}
M/((\Q u_{(\partial^-E,E)} +L(A_j)) \cap M)
\\
\nonumber
h & \longmapsto & (h(\partial^- E_j)_j)
\end{eqnarray}
and the induced map of real tori
\begin{equation}
    \label{Eq: map of real tori for constraints}
    \mathrm{Hom}(\Gamma^{[0]},M) \otimes_{\Z} \RR^{\times}
\times \prod_{V\in \tilde\Gamma^{[0]}
\setminus \Gamma^{[0]}} (M/\ZZ u_{(V,E^-(V))}) \otimes_{\ZZ} \R^{\times}
\longrightarrow
\prod_{j=1}^{\ell}
M/((\Q u_{(\partial^-E,E)} +L(A_j)) \cap M) \otimes_{\Z} \RR^{\times}
\end{equation}
obtained by tensoring with $\RR^{\times}$. 
For every line $C_{V_j}$ attached to a bivalent vertex 
$V_j$ of $\tilde{\Gamma}$ defined by the intersection of 
$\Gamma$ with the constraint $A_j$, we can consider the real locus $C_{V_j}(\RR)$ and then the intersection 
\[ C_{V_j,\RR} \coloneqq C_{V_j}(\RR) \cap 
(M \otimes_{\ZZ} \RR^{\times} )\]
with the real torus $M \otimes_{\Z} \R^{\times}$
of the toric component of $X_0$ containing 
$C_{V_j}$.
Let $[C_{V_j,\RR}]$ be the image of $C_{V_j,\RR}$ by the quotient map 
$M \otimes_{\Z} \R^{\times} \rightarrow M/((\Q u_{(\partial^-E,E)} +L(A_j)) \cap M) \otimes_{\Z} \RR^{\times}$: 
it is a point measuring the position of $C_{V_j}$ transversely to the torus $(\Q u_{(\partial^-E,E)} +L(A_j)) \cap M) \otimes_{\Z} \RR^{\times}$.
The map \eqref{Eq: map of real tori for constraints} assigns to a tuple $(C_V)_V$ of real lines the tuple $([C_{V_j}])_{j=1,\ldots,\ell}$.
On the other hand, the constraints $Z_{A_j,P_j,0}$ 
similarly define points  
$[Z_{j,\R}]$ in
$
M/((\Q u_{(\partial^-E,E)} +L(A_j)) \cap M) \otimes_{\Z} \RR^{\times}$. A line $C_{V_j}$ matches the constraint 
$Z_{A_j,P_j,0}$ if and only of 
$[C_{V_j,\R}]=[Z_{j,\RR}]$. 
Therefore, the preimage of $[\mathbf{Z}_{\RR}]$
by the map \eqref{Eq: map of real tori for constraints} 
is exactly the set of real lines matching the constraints $\mathbf{Z}_{\mathbf{A},\mathbf{P},0}$. 

The map \eqref{Eq: obstructed map}
 restricted to factors associated to the bivalent vertices of 
$\tilde{\Gamma}$ is an isomorphism onto the factors associated to the bounded edges of $\tilde{\Gamma}$ attached to bivalent vertices, and so removing these factors from the domain and target of \eqref{Eq: obstructed map} does not change the kernel of \eqref{Eq: obstructed map}.   
We conclude that the set of maximally degenerate real stable maps to 
$X_0$ with associated tropical curve 
$h \colon \Gamma \rightarrow M_{\RR}$ and matching the  constraints 
$\mathbf{Z}_{\mathbf{A},\mathbf{P},0}$
is the preimage of
$\sigma \coloneqq (0 \otimes 1, [\mathbf{Z}_{\R}])$
by the map of real tori $\mathcal{T}_{h,\RR^{\times}}$ obtained from the map of lattices $\mathcal{T}_h$ in \eqref{eq:NS map} 
by tensoring with $\RR^{\times}$. Thus, this set is a torsor under $\Ker(\mathcal{T}_{h,\RR^{\times}})$
if
$\sigma$
is in the image of 
$\mathcal{T}_{h,\RR^{\times}}$, and empty if
$\sigma$
is not in the image of 
$\mathcal{T}_{h,\RR^{\times}}$.
The result then follows from Lemma \ref{Lem: kernel}.
\end{proof}

Let $\underline{\varphi}_0 \colon C_0 \rightarrow X_0$ be a maximally 
degenerate real stable map to $X_0$ matching the 
incidence conditions $\mathbf{Z}_{\mathbf{A},\mathbf{P},0}$
and with associated tropical curve $(\Gamma, \mathbf{E},h) 
\in \T_{g,\ell,\Delta}(\mathbf{A})$.
As explained in \cite[Remark 5.8]{NS}, the proof of \cite[Prop 5.7]{NS} in the complex case establishes a bijection between 
$Z_{A_j,P_j,0}\, \cap \, \underline{\varphi}_0 (C_0)$ 
and the intersection of the two subtori 
$(\ZZ u_{(\partial^- E_j,E_j)}) \otimes_{\Z} \C^{\times}$ and $(L(A_j) \cap M) \otimes_{\Z} \C^{\times}$ in 
$M \otimes_{\Z} \C^{\times}$. This
latter number of intersection points is the covering degree of
\[
(\ZZ u_{(\partial^- E_j,E_j)})\otimes_{\Z} \C^{\times} 
\times 
(L(A_j) \cap M) \otimes_{\Z} \C^{\times}
\lra 
\big((\QQ u_{(\partial^- E_j,E_j)}+L(A_j))\cap M\big)\otimes_{\Z} 
\C^{\times}\,,
\]
which equals the index 
\begin{equation}
\nonumber
[\ZZ u_{(\partial^- E_j,E_j)} + L(A_j)\cap M:(\QQ u_{(\partial^-
E_j,E_j)}+L(A_j))\cap M].
\end {equation}
The following proposition is the real analogue of this result.
\begin{proposition}
\label{Prop: number of intersection points}
Let $\underline{\varphi}_0 \colon C_0 \rightarrow X_0$ be a maximally 
degenerate real stable map to $X_0$ matching the 
incidence conditions $\mathbf{Z}_{\mathbf{A},\mathbf{P},0}$
and with associated tropical curve $(\Gamma, \mathbf{E},h) 
\in \T_{g,\ell,\Delta}(\mathbf{A})$.
Then, the number of real intersection points of the image of $\underline{\varphi}_0$ with the incidence $Z_{A_j,P_j,0}$
equals the real lattice index of the inclusion
\begin{equation}
\label{Eq: lattice marked points}
\mathcal{A}_j: \ZZ u_{(\partial^- E_j,E_j)} + L(A_j) \cap M \lra (\QQ u_{(\partial^-
E_j,E_j)}+L(A_j))\cap M  \,.
\end{equation}
\end{proposition}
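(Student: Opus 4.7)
The plan is to mirror the complex argument sketched in \cite[Remark 5.8]{NS} after Theorem \ref{Thm: counts of prelog curves}, replacing $\C^{\times}$ by $\R^{\times}$ throughout and invoking Lemma \ref{Lem: kernel} where the complex proof uses the coincidence of lattice index with the cardinality of the kernel of the induced map of complex tori.

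First, I would localize. Let $V=\partial^- E_j$, so that $Z_{A_j,P_j,0}$ is contained in the toric component $X_{h(V)}$ of $X_0$. Because $\underline{\varphi}_0$ is maximally degenerate, its restriction to $X_{h(V)}$ is a real line (or a disjoint union thereof if $n=2$), and the relevant line $C_{V}$ meeting the constraint is parametrized by the subtorus $(\Z u_{(V,E_j)}) \otimes_{\Z} \R^{\times}$ of the big torus $M \otimes_{\Z} \R^{\times}$ of $X_{h(V)}$ in view of Lemma \ref{Lem: torsor}. By Lemma \ref{lem_real_intersection}, the image of $\underline{\varphi}_0$ meets the toric boundary of $X_{h(V)}$ only in real points, so the whole intersection $Z_{A_j,P_j,0} \cap \underline{\varphi}_0(C_0)$ takes place in the real big torus $M \otimes_{\Z} \R^{\times}$.

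Next, I would identify the two subtori whose intersection governs the count. The line $\underline{\varphi}_0(C_V)$ is a translate of the subtorus $T_1 \coloneqq (\Z u_{(V,E_j)}) \otimes_{\Z} \R^{\times}$, and by construction of $\mathcal{Z}_{A_j,P_j}$ in \eqref{Eq:tilde_Z}, the incidence $Z_{A_j,P_j,0}$ is a real translate of $T_2 \coloneqq (L(A_j) \cap M) \otimes_{\Z} \R^{\times}$. The hypothesis that $\underline{\varphi}_0$ matches the constraint $\mathbf{Z}_{\mathbf{A},\mathbf{P},0}$ forces these two translates to meet, so the intersection $\underline{\varphi}_0(C_V) \cap Z_{A_j,P_j,0}$ is a nonempty torsor under $T_1 \cap T_2$ inside $M \otimes_{\Z} \R^{\times}$. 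Equivalently, it is a torsor under the kernel of the surjection of real tori
\begin{equation*}
T_1 \times T_2 \longrightarrow \bigl((\Q u_{(V,E_j)} + L(A_j)) \cap M\bigr) \otimes_{\Z} \R^{\times},
\end{equation*}
which is precisely the map obtained from $\mathcal{A}_j$ in \eqref{Eq: lattice marked points} by tensoring with $\R^{\times}$.

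Finally, I would apply Lemma \ref{Lem: kernel} to this map. Since $\mathcal{A}_j$ is an inclusion of lattices of full rank in $(\Q u_{(V,E_j)} + L(A_j)) \cap M$, hence of finite index, the lemma identifies the cardinality of the kernel of $\mathcal{A}_j \otimes_{\Z} \R^{\times}$ with the real lattice index $\mathcal{D}^{\R}_{\mathcal{A}_j}$. Combined with the torsor description above, this yields the claimed count. The only nontrivial point is the nonemptiness of the intersection, which is provided by the assumption that $\underline{\varphi}_0$ already matches the constraint; this is the analogue, for the individual factor indexed by $j$, of the surjectivity condition appearing in Definition \ref{Def:twisted real index}, and it is what allows us to pass from ``kernel of the map'' to ``cardinality of the fiber over the constraint''.
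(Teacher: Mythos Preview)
Your proposal is correct and follows essentially the same route as the paper's proof: both identify the real intersection points with the intersection $T_1 \cap T_2$ of the two real subtori $(\Z u_{(\partial^- E_j,E_j)}) \otimes_{\Z} \R^{\times}$ and $(L(A_j)\cap M) \otimes_{\Z} \R^{\times}$ inside $M \otimes_{\Z} \R^{\times}$, rewrite this as the kernel of the multiplication/difference map $T_1 \times T_2 \to \bigl((\Q u_{(\partial^- E_j,E_j)}+L(A_j))\cap M\bigr)\otimes_{\Z}\R^{\times}$, and then invoke Lemma~\ref{Lem: kernel}. One small slip: you call this map a \emph{surjection}, but $\mathcal{A}_j \otimes_{\Z} \R^{\times}$ need not be surjective (its cokernel is $\mathrm{Coker}(\mathcal{A}_j)\otimes_{\Z}\R^{\times}$); this is harmless here since only the kernel is used.
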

\begin{proof}
The set of real intersection points between 
$Z_{A_j,P_j,0}$ and the image curve $\underline{\varphi}_0(\underline{C}_0)$
is in bijection with the intersection of the two subtori 
$(\ZZ u_{(\partial^- E_j,E_j)}) \otimes_{\Z} \RR^{\times}$ and $(L(A_j)\cap M) \otimes_{\Z} \RR^{\times}$ in 
$M \otimes_{\Z} \RR^{\times}$.
Therefore, the number of these real intersection points is the cardinality of the kernel of the map 
\begin{eqnarray}
 \nonumber
 (\ZZ u_{(\partial^- E_j,E_j)})\otimes_{\Z} \RR^{\times} 
\times 
(L(A_j)\cap M) \otimes_{\Z} \RR^{\times}
 & \lra & 
\big((\QQ u_{(\partial^- E_j,E_j)}+L(A_j))\cap M\big)\otimes_{\Z} 
\RR^{\times} \\
\nonumber
 (z_1, z_2) & \longmapsto & z_1 z_2^{-1}
\end{eqnarray}
The result then follows from Lemma 
\ref{Lem: kernel}.
\end{proof}

\section{From real maximally degenerate curves to real log curves}
\label{Sec: from real md to real log}
In this section, we stay in the setup of \S \ref{Sec:count_md_stable_maps}.
We compute the numbers of ways to lift a maximally degenerate real stable map 
$\underline{\varphi}_0 \colon \underline{C}_0 \rightarrow X_0$
to a real stable log map 
$\varphi_0 \colon C_0 \rightarrow X_0$ in the sense of
Definition \ref{log map}. Recall from 
\S \ref{Sec:log schemes} that we view $X_0$ as a log scheme log smooth over the 
standard log point $O_0$. We denote by $\pi_0 \colon X_0 \rightarrow O_0$
the corresponding log morphism. 

A $\ell$-marked maximally degenerate real stable map 
$(\underline{\varphi}_0 \colon \underline{C}_0 \rightarrow X_0, 
\mathbf{x}_0)$ is a maximal degenerate real stable map $\underline{\varphi}_0 \colon \underline{C}_0 \rightarrow X_0$ with a $\ell$-tuple 
$\mathbf{x}_0=(x_{0,1},\ldots,x_{0,\ell})$ of real marked points. 
We say that 
$(\underline{\varphi}_0 \colon \underline{C}_0 \rightarrow X_0, 
\mathbf{x}_0)$ matches incidence conditions 
$\mathbf{Z}_{\mathbf{A},\mathbf{P},0}$ if $\underline{\varphi}_0(x_{0,j})
 \in Z_{A_j,P_j,0}$ for every $j=1,\ldots,\ell$.

\begin{theorem}
\label{Thm: lifting md curves to log maps}
Let $(\underline{\varphi}_0 \colon \underline{C}_0 \to X_0, \textbf{x}_0)$ be a marked maximally
degenerate real stable map matching incidence conditions 
$\mathbf{Z}_{\mathbf{A},\mathbf{P},0}$. 
Let $(\Gamma,\mathbf{E},h)$ be the associated tropical curve, 
where $\mathbf{E}=(E_1,\dots,E_\ell)$.
 For any $E \in \Gamma^{[1]}$ set $w^{\R}(E) =2$ if $w(E)$ is even, and $w^{\R}(E) =1$ if $w(E)$ is odd.
Let $w^{\R}(\Gamma,\mathbf{E})$ be the total real weight of
$(\Gamma,\mathbf{E})$ defined by
\begin{equation}
\label{Eq: Total real weight}    
    w^{\R}(\Gamma,\mathbf{E}) \coloneqq 
\prod_{E \in \Gamma^{[1]}\setminus \Gamma^{[1]}_\infty}
w^{\R}(E) \cdot\prod_{j=1}^{\ell} w(E_j) \,.
\end{equation}
Then there are exactly $w^{\RR}(\Gamma,\mathbf{E})$ pairwise non-isomorphic pairs 
$(\varphi_0 \colon C_0/O_0 \to X_0, \mathbf{x}_0)$ of real stable maps over the standard log 
point $O_0$ with underlying marked stable map isomorphic to $\underline{\varphi}_0$, and such that $\varphi_0$ is strict at $p \in \underline{C}_0$ as in Definition \ref{def_strict}
if $p \notin \mathbf{x}$ and 
$\pi_0 \colon X_0 \rightarrow O_0$ is strict at $\underline{\varphi}_0(p)$.
\end{theorem}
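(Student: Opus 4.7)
The plan is to adapt the complex-case argument of \cite[Proposition 7.1]{NS} to the real setting, decomposing the lifting problem into local problems at the marked points and nodes of $C_0$, and counting those local log enhancements that are compatible with the standard real structures on $C_0$ and $X_0$. By Kato's structure theorem (recalled as Theorem \ref{Thm: structure of log curves}), at a smooth non-marked point of $\underline{C}_0$ mapping to the big torus of a component of $X_0$, the log morphism is forced to be uniquely strict, so all nontrivial choices are concentrated at nodes and marked points. Moreover, at each node $p$ the integer $e$ in the Kato chart $S_{e} = \NN^2 \oplus_\NN \NN$ is forced by the kissing condition of Definition \ref{Def: prelog} to equal the weight of the corresponding edge of the refined dual graph $\tilde{\Gamma}$.

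At a node $p$ corresponding to a bounded edge $\tilde E$ of $\tilde \Gamma$ with weight $w = w(\tilde E)$, Remark \ref{Rem:nonstandard charts} parametrizes the local log morphisms by a phase $\zeta \in \C^\times$, with two phases $\zeta, \zeta'$ yielding isomorphic log enhancements if and only if $(\zeta/\zeta')^w = 1$. Requiring $\varphi_0^\sharp$ to intertwine the standard real structures forces $\zeta \in \R^\times$, so the number of real isomorphism classes of log enhancement at $p$ equals $\#\mu_w(\R^\times)$, which is $w^\R(\tilde E) = 2$ when $w$ is even and $1$ when $w$ is odd. The refinement from $\Gamma$ to $\tilde \Gamma$ subdivides each bounded edge $E$ of $\Gamma$ into several pieces of equal weight $w(E)$, but the log enhancements at consecutive nodes along a subdivided edge are linked through the rigid log structure on the intermediate divalent line components (as in \cite{NS}), so together they contribute a single factor $w^\R(E)$ per bounded edge $E$ of $\Gamma$.

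At a marked point $x_j$ lying on the marked edge $E_j$, the Kato chart (ii) of the form $\NN \oplus \NN$ applies; the log enhancement of $\varphi_0$ at $x_j$ corresponds to labeling which of the $w(E_j)$ sheets of the divalent cover $C_{V_j} \to \PP^1$ carries the log marking. These $w(E_j)$ discrete choices each individually admit a real representative, since $x_j$ is a real point and $Z_{A_j,P_j,0}$ is a real subvariety (so Lemma \ref{lem_real_intersection} gives reality of the relevant intersections with the toric boundary of the component $X_{h(V_j)}$), and they therefore contribute a full factor of $w(E_j)$ each without any reduction by the real condition.

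Finally, the local log enhancements glue to a global one unobstructedly thanks to the log smoothness of $X_0 \to O_0$ and the non-superabundance of $(\Gamma, \mathbf{E}, h)$, in direct parallel to \cite{NS}, and distinct local choices yield non-isomorphic real log lifts; multiplying the local contributions at nodes and at marked points then yields the claimed total $w^\R(\Gamma,\mathbf{E})$. The main obstacle is verifying at marked points that the $w(E_j)$ complex sheet-choices each individually descend to a real log enhancement (rather than being reduced by the real structure as at the internal nodes); this requires careful analysis of the real structure on the divalent cover $C_{V_j}$ together with the reality of $x_j$ and of $Z_{A_j,P_j,0}$.
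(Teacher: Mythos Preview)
Your overall strategy—reducing to local analyses at nodes and marked points—matches the paper, and your node analysis correctly identifies that the real constraint at a node of weight $w$ cuts the $\mu_w$ worth of complex log enhancements down to $\mu_w(\R^\times)$, i.e.\ to $w^\R(E)$ choices. However, two of the mechanisms you invoke are incorrect.

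First, your passage from $\tilde\Gamma$ to $\Gamma$ is wrong. You claim that the log enhancements at consecutive nodes along a subdivided bounded edge are ``linked through the rigid log structure on the intermediate divalent line components''. They are not linked: the choice of $\zeta$ at each node is an independent local datum, and \cite{NS} does not argue otherwise. The paper's mechanism is instead to quotient by the group $\mathrm{Aut}^{\R,\underline{\varphi}_0}(\underline{C}_0,\mathbf{x}_0)$ of real automorphisms of the marked stable map. Each \emph{unmarked} bivalent component of $\underline{C}_0$ is a fully ramified degree-$w_V$ cover of $\PP^1$, contributing a factor $\ZZ/w^\R_V\ZZ$ of real deck transformations; these act simply transitively on the $w^\R$ log structures at one adjacent node, and running along a maximal chain of unmarked bivalent vertices collapses all but one edge factor. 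This is how $\prod_{\tilde E}w^\R(\tilde E)$ over $\tilde\Gamma$ becomes a product over edges of $\Gamma$ subdivided by the markings.

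Second, your source for the factor $\prod_j w(E_j)$ is wrong. The marked point $x_j$ is part of the given data $(\underline{\varphi}_0,\mathbf{x}_0)$, so there is no ``choice of sheet''; and since $x_j$ maps to the big torus orbit of a component of $X_0$ (where $\pi_0$ is strict), the paper explicitly observes that the log enhancement at $x_j$ is \emph{unique}. In the paper, the extra marked-edge factor comes from the automorphism analysis: the presence of $x_j$ on the bivalent component $C_{V_j}$ kills the deck-transformation automorphisms there, so the log-structure choices at the edges adjacent to $V_j$ are \emph{not} identified under the quotient and survive to the final count.

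Finally, the step ``requiring $\varphi_0^\sharp$ to intertwine the real structures forces $\zeta\in\R^\times$'' is the crux of the argument and is not automatic; the paper isolates this as Lemma~\ref{Lem: involution on ghosts}, computing the action of the real involution on the chart $\beta_\zeta$ and showing it sends $s_{((1,0),0)}\mapsto \zeta^2 s_{((1,0),0)}$, whence the relation $s_{((1,0),0)}s_{((0,1),0)}=s_{((0,0),e/\mu)}$ forces $\zeta^2=1$. You should carry this out rather than assert it.
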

Before proving Theorem \ref{Thm: lifting md curves to log maps}, we will
analyse the action of the real involution on nodal points of log curves.
\begin{lemma}
\label{Lem: involution on ghosts}
Let $q$ be a nodal point on a log smooth curve $(C,\mathcal{M}_C)$ over the standard log point. Let $\beta_{\zeta}: S_e \to \mathcal{O}_{C,q}$, be a chart defined as in Remark \ref{Rem:nonstandard charts} for a root of unity $\zeta \in \C^{\times}$. Let $(\iota,\iota^{\flat})$ on $(C,\mathcal{M}_C)$ be a real structure on $(C,\M_C)$ lifting the standard real involution $\iota$ on $\underline{C}$. Then, \'etale locally, on a
neighbourhood of $q$, the involution $\iota^{\flat}$ is defined as
\begin{eqnarray}
\nonumber
\phi: S_e \oplus_{\beta^{-1}_{\zeta}(\mathcal{O}_C^{\times})} \mathcal{O}_C^{\times} & \longmapsto &  S_e \oplus_{\beta^{-1}_{\zeta}(\mathcal{O}_C^{\times})} \mathcal{O}_C^{\times} \\
\nonumber
(p, h) & \longmapsto & (\phi_1(p), \phi_2(p)\cdot \iota(h))
\end{eqnarray}
where $\phi_1: S_e \to S_e$ is the identity, and $\phi_2: S_e \to  \mathcal{O}_C^{\times}$ is given by \[s_{((1,0),0)}  \mapsto  \zeta^2, \,\  s_{((0,1),0)} \mapsto  1, \,\ s_{((0,0),1)}
\mapsto  1 \,. \] 
\end{lemma}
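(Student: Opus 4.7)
The strategy is to use the local chart $\beta_\zeta$ to trivialise the log structure in a neighbourhood of $q$, and to read off $\iota^\flat$ from its compatibility with $\alpha$ and with the log morphism to $O_0$. I would split $\iota^\flat$ into its action $\phi_1$ on the ghost part $S_e$ and its unit part $\phi_2$, then pin each down by a separate argument.

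First I would unwind the local description. Because $\beta_\zeta$ maps every non-identity element of $S_e$ to either $0$, or to $z$ times a unit, or to $w$ times a unit — none of which are units of $\mathcal{O}_{C,q}$ — the monoid $\beta_\zeta^{-1}(\mathcal{O}_{C,q}^\times)$ is trivial, so the pushout $S_e \oplus_{\beta_\zeta^{-1}(\mathcal{O}_{C,q}^\times)} \mathcal{O}_{C,q}^\times$ describing $\mathcal{M}_{C,q}$ is simply the direct sum. Any lift of $\iota$ to this monoid that restricts to ordinary complex conjugation on $\mathcal{O}_{C,q}^\times$ (which is forced by $\iota^\flat$ being a real log lift, since the latter subsheaf is exactly $\alpha^{-1}(\mathcal{O}_C^\times)$) must then take the form $(p,h) \mapsto (\phi_1(p),\phi_2(p)\cdot \iota(h))$ for monoid homomorphisms $\phi_1 \colon S_e \to S_e$ and $\phi_2 \colon S_e \to \mathcal{O}_{C,q}^\times$.

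Next I would show $\phi_1 = \mathrm{id}$. Passing to ghosts gives an involution $\overline{\iota^\flat}$ of $\overline{\mathcal{M}}_{C,q} = S_e$. The two generators $((1,0),0)$ and $((0,1),0)$ correspond to the local branches $\{w=0\}$ and $\{z=0\}$ of the node in $\Spec \C[z,w]/(zw)$. Since $z$ and $w$ are real functions for the standard real structure, $\iota$ preserves each branch, hence $\phi_1$ fixes these two generators. The relation $((1,0),0) + ((0,1),0) = e\cdot((0,0),1)$ holding in $S_e$ (together with the saturation/torsion-freeness of the monoid) then forces $\phi_1((0,0),1) = ((0,0),1)$, so $\phi_1$ is the identity.

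Finally I would pin down $\phi_2$. The commutative square $\alpha \circ \iota^\flat = \iota \circ \alpha$ yields the identity $\phi_2(p)\cdot \beta_\zeta(p) = \iota(\beta_\zeta(p))$ in $\mathcal{O}_{C,q}$ for every $p \in S_e$. For $p = ((1,0),0)$ we have $\beta_\zeta(p) = \zeta^{-1}z$ and $\iota(\beta_\zeta(p)) = \overline{\zeta^{-1}}\cdot z = \zeta z$, using $\overline{\zeta}=\zeta^{-1}$ for a root of unity, which gives $\phi_2((1,0),0) = \zeta^2$. For $p = ((0,1),0)$ the function $\beta_\zeta(p) = w$ is real, so $\phi_2((0,1),0) = 1$. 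The value $\phi_2((0,0),1)$ is unconstrained by $\alpha$ (since $\beta_\zeta((0,0),1)=0$) but is fixed to be $1$ by the compatibility $\iota^\flat \circ \pi_0^\sharp = \pi_0^\sharp \circ \iota^\flat_{O_0}$, because $\iota^\flat_{O_0}$ acts trivially on the ghost of the standard log point. The involution condition $\iota^\flat \circ \iota^\flat = \mathrm{id}$ is then automatic: on generators it reduces to $\phi_2(p)\cdot \iota(\phi_2(p))=1$, which holds since $\zeta^2 \cdot \zeta^{-2} = 1$. The only delicate point is the rigidification of the ghost action in the second step, where one must rule out the \emph{a priori} possibility of a branch swap at the node; this is prevented by the hypothesis that $\iota$ is the standard real involution on $\underline{C}$.
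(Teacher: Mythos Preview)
Your proof is correct and follows the same overall strategy as the paper: split $\iota^\flat$ into a ghost part $\phi_1$ and a unit part $\phi_2$, argue $\phi_1=\mathrm{id}$ from the fact that the standard real involution preserves each branch at the node, and then read off $\phi_2$ on generators from the compatibility $\alpha\circ\iota^\flat=\iota\circ\alpha$ (equivalently $\beta_\zeta\circ\phi=\iota\circ\beta_\zeta$).

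There is one point where your argument is actually more careful than the paper's. The paper asserts that $\phi_2(s_{((0,0),1)})=1$ follows ``by an analogous computation'', but since $\beta_\zeta((0,0),1)=0$ the equation $\beta_\zeta\circ\phi=\iota\circ\beta_\zeta$ is vacuous on this generator. You instead pin down this value via the reality of the log morphism $\pi\colon C\to O_0$, which is the right mechanism and is exactly what is needed for the subsequent application (where the relation $s_{((1,0),0)}\cdot s_{((0,1),0)}=s_{((0,0),e/\mu)}$ then forces $\zeta^2=1$). Note that this step uses that the real structure on $(C,\mathcal{M}_C)$ is over the real standard log point $O_0$; this hypothesis is implicit in the setup and in how the lemma is used, though not spelled out in the lemma statement itself.
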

\begin{proof}
In order to use uniformly multiplicative notation for monoids, we denote
$t^p h$ for $(p,h)$ where $p\in S_e$ and $h\in \mathcal{O}_C^{\times}$.
Here $t^p$ is simply a formal multiplicative notation for $p \in S_e$.
We have
$t^{p} \cdot t^{p'}=t^{p+p'}$ for every $p, p' \in S_e$.

The real log structure being a lift of the real structure $\iota$ on 
$\underline{C}$, we necessarily have $\phi(h)=\iota (h)$ for every 
$h \in \cO_C^{\times}$.

We can write
$\phi(t^p)=t^{\phi_1(p)} \phi_2(p)$, where 
$\phi_1 \colon S_e \rightarrow S_e$
and $\phi_2 \colon S_e \rightarrow \cO_C^{\times}$
are morphisms of monoids.
Writing that $\phi$ is an involution, we obtain 
\[ t^p= \phi^2(t^p)= \phi(t^{\phi_1(p)}\phi_2(p))= \phi(t^{\phi_1(p)})\phi(\phi_2(p))  \]
Since $\phi_2(p) \in \mathcal{O}_C^{\times}$, we have $\phi(\phi_2(p))= \iota(\phi_2(p))$.
On the other hand, we have \[\phi(t^{\phi_1(p)})=t^{\phi_1^2(p)}\phi_2(p)\] by the definition of $\phi$. Therefore, we have
\[ p= t^{\phi_1^2(p)} \phi_2(p)\iota(\phi_2(p))\,,  \]
where $\phi_1^2(p) \in S_e$ and $\phi_2(p)\iota(\phi_2(p)) \in \mathcal{O}_C^{\times}$. Since each component of $C$ remains invariant  under the standard involution $\iota$ on $C$, we get $\phi_1=\mathrm{Id}$. Hence, we obtain $\phi(t^p)= \phi_2(p)\cdot t^p$. 
Therefore, we have
\[   \phi(s_{((1,0),0)}) = \phi_2(s_{((1,0),0)})s_{((1,0),0)}    \,.  \]
Thus, 
\begin{equation}
\label{Eq:1}
\beta_{\zeta}(\phi(s_{((1,0),0)})) = \phi_2(s_{((1,0),0)}) \zeta^{-1}z
\end{equation}
where $\beta_{\zeta}$ is the chart defined as in Remark \ref{Rem:nonstandard charts}.
Moreover, since $\phi$ is compatible with $\iota$, we have $\beta_{\zeta}\phi= \iota \beta_{\zeta}$,  Hence, 
\begin{equation}
\label{Eq:2}
\beta_{\zeta}(\phi(s_{((1,0),0)})) = \iota(\zeta^{-1}z) = \overline{\zeta^{-1}}z= \zeta \,.
\end{equation}

From equations \eqref{Eq:1} and \eqref{Eq:2}, it follows that $ \phi_2(s_{((1,0),0)})\zeta^{-1}= \zeta$, hence 
\[ \phi_2(s_{((1,0),0)}) =\zeta^2  \]
By an analogous computation, we obtain 
\[ \phi_2(s_{((0,1),0)}) = \phi_2(s_{((0,0),1)}) = 1\,.    \]
\end{proof}

Now we are ready to prove the main result of this section.

\textbf{Proof of Theorem \ref{Thm: lifting md curves to log maps}:}
The locus of strictness of $\pi_0$ is the union of the $n$-dimensional torus orbits of the toric irreducible components of $X_0$,
\[\mathring{X}_0 \colon = X_0 \setminus (\mathrm{Sing}(X_0) \amalg \partial X_0),  \] 
where by $\partial X_0$ we denote the degeneration of the toric boundary of the general fibers.

By the strictness condition in the statement of Theorem 
\ref{Thm: lifting md curves to log maps}, a log lift 
$\varphi_0$ of $\underline{\varphi}_0$
is completely determined away from the union of the nodes of 
$\underline{C}_0$, of the marked points $x_{0,i}$, and of 
the preimages of $\partial X_0$.

At the preimages of $\partial X_0$, toric transversality guarantees the uniqueness of the log extension, as in the proof of Proposition $7.1$ in \cite{NS}. Similarly, the marked points $x_{0,i}$ map to $\mathring{X}_0$
and so there is a unique log extension at these points.

It thus remains to investigate the nodal points. We will show that at a node $q \in C_0$, corresponding to a bounded edge $E \in \Tilde{\Gamma}^{[1]}$, if we fix local coordinates on the two components of $C_0$ meeting at $q$, there are precisely 
\[  \mu^{\RR} \coloneqq  w^{\RR}(E)  \]
pairwise non-isomorphic extensions of $\underline{\varphi}_0^*\mathcal{M}_{X_0}$ to a real log structure $\mathcal{M}_{C_0}$, log smooth over $O_0$. We denote by $e$ the integral length of $h(E)$. Recall that by definition of a good polyhedral decomposition, the weight $w(E)$ divides $e$.

By toric transversality, the node $q$ maps to the $(n-1)$-dimensional
torus orbit of the $n-1$-dimensional toric variety $X_{h(E)}\subset (X_0)_{\mathrm{sing}}$, and the intersection numbers of the branches of $C$ meeting at $q$ with $X_{h(E)}$ equal $\mu \coloneqq w(E)$. It is shown in \cite[Prop 7.1]{NS} that there exist precisely $\mu$ non-isomorphic extensions of $\underline{\varphi}_0^*\mathcal{M}_{X_0}$ to a log smooth structure in a neighbourhood of $q$, and each of these extensions differ by a well-defined $\mu$'th root of unity $\zeta \in \C^{\times}$, giving rise to $\mu$ non-isomorphic log maps $\varphi_1,\ldots,\varphi_{\mu}$, where for each such map a chart for the domain curve around the node $q$ is given by
chart
\begin{eqnarray}
\beta_{\zeta} \colon S_{e/\mu} & \lra &  \mathcal{O}_{C,q}\\
\nonumber
((a,b),c)& \longmapsto & \left\{
	\begin{array}{ll}
		(\zeta^{-1}z^a)w^b  & \mbox{if } c = 0 \\
		0 & \mbox{if } c \neq 0
	\end{array}
\right.
\end{eqnarray}
and the following relation holds
\begin{equation}
\label{Eq: Relation on ghosts}    
     s_{((1,0),0)} \cdot s_{((0,1),0)} = s_{((0,0),e/\mu)}
\end{equation}
We show that there is a real log structure $(\iota,\iota^{\flat})$ on $(C_0,M_{C_0})$ lifting the standard real structure on $C_0$ if and only if $\zeta$ is real, and that in such case this real log structure is unique. By Lemma \ref{Lem: involution on ghosts}, such real log structure $(\iota,\iota^{\flat})$ is of the form
\[\iota^{\flat}(s_{((1,0),0)})  =  \zeta^2, \,\  \iota^{\flat}(s_{((0,1),0)}) =  1, \,\ \iota^{\flat}(s_{((0,0),1))=1.}
=  1. \] 
Applying $\iota^{\flat}$ to both sides of the Equation \eqref{Eq: Relation on ghosts}, we obtain $\zeta^2=1$, hence $\zeta \in \RR$. Therefore, fixing local coordinates on branches of nodes, we obtain  
\[\prod_{E \in  \tilde{\Gamma}^{[1]}\setminus \tilde{\Gamma}^{[1]}_\infty}
w^{\R}(E)\]
many real log lifts 
$\varphi_0 \colon C_0 \rightarrow X_0$
of the real stable map $\underline{\varphi}_0 \colon 
\underline{C}_0 \rightarrow X_0$. 
It remains to show that, allowing action of reparametrizations of local coordinates, these real log lifts define 
\[\prod_{E \in \Gamma^{[1]}\setminus \Gamma^{[1]}_\infty}
w^{\R}(E) \prod_{j=1}^\ell w(E_j)\]
isomorphism classes. As in the proof of \cite[Proposition 7.1]{NS},
it is enough to consider the action of the group $\mathrm{Aut}^{\RR, \underline{\varphi}_0}(\underline{C}_0, \mathbf{x}_0)$ of real automorphisms of $\underline{C}_0$ fixing the marked points
and commuting with the map $\underline{\varphi}_0$.
As $\underline{\varphi}_0$ is an immersion, such automorphism preserves the components and fixes the nodes of $\underline{C}_0$.
As the components of $\underline{C}_0$ are projective lines, 
the action of $\mathrm{Aut}^{\RR, \underline{\varphi}_0}(\underline{C}_0, \mathbf{x}_0)$
 is non-trivial only on the components of 
 $\underline{C}_0$ containing at most two special points, that 
 is corresponding to unmarked divalent vertices of 
 $\tilde{\Gamma}$. Let $V$ be an unmarked divalent vertex of $\tilde{\Gamma}$ and let $w_V$ be the common weight of the two edges attached to $V$. In restriction to the component of
 $\underline{C}_0$ corresponding to $V$, 
 $\underline{\varphi}_0$ is a cover of 
 $\PP^1$ by $\PP^1$ of degree $w_V$ and fully ramified at two points.
 Thus, denoting $w^{\RR}_V$ the common real weight of the two edges attached to $V$, the unmarked bivalent vertex $V$ contributes a factor 
 $\ZZ/_{w^{\RR}_V \ZZ}$ to $\mathrm{Aut}^{\RR, \underline{\varphi}_0}(\underline{C}_0, \mathbf{x}_0)$.
 
Let us study the action of $\mathrm{Aut}^{\RR, \underline{\varphi}_0}(\underline{C}_0, \mathbf{x}_0)$ on the 
 $\prod_{E \in  \tilde{\Gamma}^{[1]}\setminus \tilde{\Gamma}^{[1]}_\infty}
w^{\R}(E)$ real log structures constructed above.
Let $V_1,\ldots,V_n$ be a maximal chain of unmarked bivalent vertices 
of $\tilde{\Gamma}$ and let $e_1, \ldots, e_{n-1}$ be the bounded edges of $\tilde{\Gamma}$ between $V_1$ and $V_2$, $\ldots$, $V_{n-1}$ and 
$V_n$ respectively. Let $V_0$ be a vertex of $\tilde{\Gamma}$
which is not unmarked bivalent and which is attached to the chain.
Necessarily, $V_0$ is either connected to $V_1$ or $V_n$, and up to relabelling, we assume that $V_0$ is connected to $V_1$ by an edge $e_0$.
For every $j=1,\ldots,n$, the $w^{\RR}(e_{j-1})=w^{\RR}_{V_j}$ log structures attached to the edge $E_{j-1}$ form a torsor under the factor $\ZZ/_{w^{\RR}_V \ZZ}$ of $\mathrm{Aut}^{\RR, \underline{\varphi}_0}(\underline{C}_0, \mathbf{x}_0)$ attached to the vertex $V_j$. In particular, the 
$\prod_{j=0}^{n-1} w(e_j)$ log structures attached to the edges 
$e_0, \dots, e_{n-1}$ all become isomorphic under the action of $\mathrm{Aut}^{\RR, \underline{\varphi}_0}(\underline{C}_0, \mathbf{x}_0)$.

Therefore, the number of isomorphism classes of real log structures after action of $\mathrm{Aut}^{\RR, \underline{\varphi}_0}(\underline{C}_0, \mathbf{x}_0)$ on the 
\[ \prod_{E \in  \tilde{\Gamma}^{[1]}\setminus \tilde{\Gamma}^{[1]}_\infty}
w^{\R}(E) \] 
real log structures previously constructed is obtained by replacing all the factors
$\prod_{j=0}^{n-1} w(e_j)$ by $1$.
As going from $\tilde{\Gamma}$ to $\Gamma$
subdivided by the marked points consists precisely in erasing all the chains $V_1,\ldots,V_n$ of unmarked bivalent vertices and so all
the corresponding edges $e_0,\ldots, e_{n-1}$. Hence, it follows that the number of real log maps with underlying map given by $\underline\varphi_0$ is indeed 
\[\prod_{E \in \Gamma^{[1]}\setminus \Gamma^{[1]}_\infty}
w^{\R}(E) \prod_{j=1}^\ell w(E_j)\,.\]

\section{Deformation theory for real log curves}
\label{sec:defo}
In this section we show that given a maximally degenerate real log map in the central fiber of a toric degeneration there exists a unique real stable log map obtained by a deformation of it. 
Throughout this section, for every log morphism $Y \rightarrow Z$, between two log schemes $Y$ and $Z$ we denote by $\Theta_{Y/Z}$
the associated relative log tangent sheaf. Also, for every real scheme $Y$, we denote by $Y^{\RR}$ the corresponding scheme over 
$\Spec \R$, so that $Y = Y^{\RR} \times_{\Spec \RR} \Spec \C$. Similarly,
for every real sheaf $E$ on a real scheme $Y$, we denote by $E^{\RR}$ the corresponding 
sheaf on $Y^{\RR}$, so that $E=E^{\RR} \times_{\Spec \RR} \Spec \C$.
Note that $H^0(Y^{\RR}, E^{\RR})
=H^0(Y,E)^{\RR}$, but $H^i(Y^{\RR}, E^{\RR})
\neq H^i(Y,E)^{\RR}$ in general for $i>0$.

Let $g \in \NN$, 
$\Delta \colon M \setminus \{0\} \rightarrow \NN$ with finite support, and
$\mathbf{A} = (A_1,\ldots,A_\ell)$ an affine constraint that is general for $(g,\Delta)$ in the sense of Definition \ref{Def: general affine}. 
Let $\mathscr{P}$ be a polyhedral decomposition of $M_{\Q}$
which is good for $(g,\Delta,\mathbf{A})$ in the sense of 
Definition \ref{Def: good decomposition}.
Let $\pi \colon \mathcal{X} \rightarrow \A^1$ be the toric degeneration 
defined by $\mathscr{P}$ (see \S \ref{Sec: toric degenerations}).
Let $X_t \coloneqq \pi^{-1}(t)$ be the fiber of $\pi$ over 
$t \in \A^1$. In particular, we denote by $X_0$ the central fiber of 
$\pi$. We fix $\mathbf{P}=(P_1,\ldots,P_\ell)$ a $\ell$-tuple of real points in the $n$-dimensional torus orbit of $X=X_1$. 
By \eqref{Eq:tilde_Z}-\eqref{Eq:tilde_Z_0}, the affine subspaces 
$A_j$ and the points $P_j$
define incidence conditions 
$\mathcal{Z}_{A_j,P_j} \subset \mathcal{X}$
and 
$Z_{A_j,P_j,t} = \mathcal{Z}_{A_j,P_j} \cap X_t \subset X_t\,.$ 
We denote 
$\mathbf{\mathcal{Z}}_{\mathbf{A},\mathbf{P}}
\coloneqq (\mathcal{Z}_{A_1,P_1}, \ldots, \mathcal{Z}_{A_\ell,P_\ell})$
and 
$\mathbf{Z}_{\mathbf{A},\mathbf{P},t} 
\coloneqq (Z_{A_1,P_1,t},\ldots,Z_{A_{\ell},P_{\ell},t})$. In particular, 
$\mathbf{Z}_{\mathbf{A},\mathbf{P},0}$ is a tuple of incidence conditions in the 
central fiber $X_0$.

Let $(\varphi_0 \colon C_0/O_0 \rightarrow X_0, \mathbf{x}_0)$
be a maximally degenerate $\ell$-marked real stable log map over the 
standard log point $O_0$ as in 
Theorem \ref{Thm: lifting md curves to log maps}
and matching incidence conditions 
$\mathbf{Z}_{\mathbf{A},\mathbf{P},0}$,
with associated tropical curve 
$(\Gamma,\mathbf{E},h) \in \T_{g,\ell,\Delta}(\mathbf{A})$.
Recall that $\varphi_0 \colon C_0 \rightarrow X_0$ is strict wherever $X_0 \rightarrow O_0$
is strict and that we are not at a marked point.
For every nonnegative integer $k$, we consider the thickening 
\[O_k \coloneqq \Spec \C[t]/(t^{k+1}),\] endowed with the real log structure obtained by asking that the 
inclusion $O_k \hookrightarrow \A^1$ is strict, that is, the log structure on $O_k$ is defined by the chart 
$\NN \rightarrow \cO_{O_k}$, $1 \mapsto t$. We will to study the lifts order by order in $k$ of $(\varphi_0 \colon C_0/O_0 \rightarrow X_0,\mathbf{x})$ into a $\ell$-marked real stable map 
$(\varphi_k \colon C_k/O_k \rightarrow \mathcal{X})$ over $O_k$ matching the constraints 
$\tilde{\mathbf{Z}}_{\mathbf{A},\mathbf{P}}$. Over $\C$ without taking into account real structures, an analogous study has been done in \cite{NS, nishinou2009correspondence, CFPU} using log smooth deformation theory
\cite{K,KF,Katof}.
We explain below how to incorporate the real structures in this deformation argument.

The deformation theory of $(\varphi_0 \colon C_0/O_0 \rightarrow X_0)$
is controlled by the logarithmic normal sheaf $\mathcal{N}_{\varphi_0}
\coloneq \varphi_0^{*} \Theta_{X_0/O_0} /\Theta_{C_0/O_0}$.
Real log structures on $X_0/O_0$ and 
$C_0/O_0$ induce real structures on the logarithmic tangent sheaves
$\Theta_{X_0/O_0}$ and $\Theta_{C_0/O_0}$ respectively. 
As $\varphi_0$ is a real log map, the real structure on 
$\Theta_{C_0/O_0}$ is induced by the real structure on 
$\varphi_0^{*} \Theta_{X_0/O_0}$, and so the quotient 
$\mathcal{N}_{\varphi_0}=\varphi_0^{*} \Theta_{X_0/O_0} /\Theta_{C_0/O_0}$
admits a natural real structure.

\begin{lemma} \label{lem_defo_torsor}
Let $(\varphi_{k-1} \colon C_{k-1}/O_{k-1} \rightarrow \mathcal{\cX})$ be a real stable log map over 
$O_{k-1}$ lifting
$(\varphi_0 \colon C_0/O_0 \rightarrow X_0)$. Then the set of isomorphism classes of 
real stable log maps $(\varphi_k \colon C_k/O_k \rightarrow \mathcal{X})$ 
over $O_k$ lifting $\varphi_{k-1}$ is a torsor under 
$H^0(C_0^{\RR},\mathcal{N}_{\varphi_0}^{\RR})=H^0(C_0, \mathcal{N}_{\varphi_0})^{\RR}$.
\end{lemma}

\begin{proof}
Since the log smooth deformation theory of \cite{K,KF,Katof} is valid over any base field, it suffices to essentially follow the arguments of \cite{NS, nishinou2009correspondence, CFPU}. We first prove that the set of real lifts $(\varphi_k \colon C_k/O_k \rightarrow \mathcal{X})$ of $\varphi_{k-1}$ is not empty. As $C_0^{\R}$ is of dimension one, we have 
$H^2(C_0^{\RR}, \Theta_{C_0/O_0}^{\RR})=0$, so deformations of a real log smooth curve are unobstructed and so there exists a real log smooth lift $C_k/O_k$ of $C_{k-1}/O_{k-1}$.

By \cite[Proposition 3.3]{CFPU}, whose proof uses general log smooth deformation theory and so is valid over any base field, to show the existence of a real lift 
$(\varphi_k \colon C_k/O_k \rightarrow \mathcal{X})$ of $\varphi_{k-1}$, it is enough to show that the natural map
\begin{equation} \label{eq:coh_map}
H^1(C_0^{\RR}, \Theta_{C_0/O_0}^{\RR}) \rightarrow H^1(C_0^{\RR}, \varphi_0^{*} \Theta_{X_0/O_0}^{\RR} )
\end{equation}
is surjective. As $\Gamma$ is trivalent, the map
\eqref{eq:coh_map} can be identified
by the proof of \cite[Proposition 4.2]{CFPU} with the abundancy map
\begin{eqnarray}
\label{eq:abundancy_map}
\RR^{ \sharp E(\Gamma)} & \longrightarrow & \Hom(H_1(\Gamma),M_{\RR})\\
\nonumber
 (l_E) & \longmapsto & \left(\sum_E a_E [E] \mapsto \sum_{E} a_E l_E (h(\partial^+ E)-h(\partial^-E))\right)
\end{eqnarray}
which is surjective as $(\Gamma, h)$ is non-superabundant.
This concludes the proof of existence of a lift $\varphi_k$ of 
$\varphi_{k-1}$.

The fact that the set of real lifts of $\varphi_{k-1}$ is a pseudo torsor under 
$H^0(C_0, \mathcal{N}_{\varphi_0})^{\R}$ follows from general deformation theory arguments
which are valid over any base field, see the last part of the proof of \cite[Lemma 7.2]{NS}.
\end{proof}

\begin{lemma} \label{lem_defo_unique_lift}
Let $(\varphi_{k-1} \colon C_{k-1}/O_{k-1} \rightarrow \mathcal{X},\mathbf{x}_{k-1})$
be a real stable log map over
$O_{k-1}$ lifting $(\varphi_0 \colon C_0/O_0 \rightarrow \mathcal{X},\mathbf{x}_0)$
and matching the constraints $\mathbf{\mathcal{Z}}_{\mathbf{A},\mathbf{P}}$. 
Then up to isomorphism
there exists a unique real stable log map
$(\varphi_k \colon C_k/O_k \rightarrow X,\mathbf{x}_k)$ over 
$O_k$ matching  the constraints $\mathbf{\mathcal{Z}}_{\mathbf{A},\mathbf{P}}$
and lifting $\varphi_{k-1}$.
\end{lemma}

\begin{proof}
As in the proof of \cite[Proposition 7.3]{NS}, we denote by $T_y$ the fiber at a closed point $y$ of a log tangent sheaf $\Theta$.
As the real log map $\varphi_0$ is real and the constraints $\mathcal{Z}_{A_i,P_i}$ are real, the transversality map 
\[ H^0(C_0,\cN_{\varphi_0}) \longrightarrow \prod_i 
T_{\mathcal{X}/\A^1, \varphi_0(x_i)}/(T_{\mathcal{Z}_{A_i,P_i}/\A^1,\varphi_0(x_i)} + D\varphi_0
(T_{C_0/O_0,x_i}))\]
is real.

According to \cite[Proposition 7.3]{NS}, the transversality map is an isomorphism.
Therefore, as the transversality map is real, it induces an isomorphism at the 
level of real subspaces:
\[ H^0(C_0,\cN_{\varphi_0})^{\R} \simeq \prod_i 
T^{\R}_{\mathcal{X}/\A^1, \varphi_0(x_i)}/(T^{\R}_{\mathcal{Z}_{A_i,P_i}/\A^1,\varphi_0(x_i)} + D\varphi_0
(T^{\R}_{C_0/O_0,x_i})) \,.\]

As by Lemma \ref{lem_defo_torsor} the set of real lifts $(\varphi_k \colon C_k/O_k \rightarrow \mathcal{X})$ of $\varphi_{k-1}$ is a torsor under $H^0(C_0,\cN_{\varphi_0})^{\R}$, this 
isomorphism implies Lemma \ref{lem_defo_unique_lift}
\end{proof}

\begin{theorem} \label{thm: deformation}
There exists a unique real stable log map 
$(\varphi_\infty \colon C_\infty/O_\infty \rightarrow \mathcal{X})$ over 
$O_\infty \coloneqq \Spec \C \lfor t \rfor$, matching the constraints $\mathbf{\mathcal{Z}}_{\mathbf{A},\mathbf{P}}$ and lifting
$(\varphi_0 \colon C_0/O_0 \rightarrow X_0, \mathbf{x}_0)$.
\end{theorem}

\begin{proof}
We take the limit $k \rightarrow +\infty$ of Lemma \ref{lem_defo_unique_lift} and use the existence of the 
moduli space of stable maps, as in the proof of
\cite[Corollary 7.4]{NS}.
\end{proof}

\section{The tropical correspondence theorem for real log curves}
\label{sec:The Main Theorem}
Let $X$ be a $n$-dimensional proper toric variety, defined by a complete fan $\Sigma$ in $M_{\RR}$. Fix a tuple 
$(g,\Delta,\mathbf{A},\mathbf{P})$, where:

\begin{itemize}
    \item $g$ is a nonnegative integer.
    \item $\Delta$ is a map $\Delta:M\setminus\{0\}\to \NN$ with support contained in the union of rays of $\Sigma$. The choice of $\Delta$ specifies a degree and tangency conditions along the toric divisors for a curve in $X$, and a tropical degree for a tropical curve in $M_{\RR}$.
    We denote by $|\Delta|$ the number of $v \in M\setminus \{0\}$
    with $\Delta(v) \neq 0$.
    \item  $\mathbf{A}=(A_1,\ldots,A_\ell)$ is an affine constraint that is general for $(g,\Delta)$ in the sense of Definition \ref{Def: general affine}. 
    \item  $\mathbf{P}=(P_1,\ldots, P_\ell)$ is a tuple of real points in the $n$-dimensional torus orbit of $X$.
\end{itemize}

Let $\mathscr{P}$ be a polyhedral decomposition of $M_{\Q}$
which is good for $(g,\Delta,\mathbf{A})$ in the sense of 
Definition \ref{Def: good decomposition}.
Let $\pi \colon \mathcal{X} \rightarrow \A^1$ be the toric degeneration 
defined by $\mathscr{P}$ (see \S \ref{Sec: toric degenerations}).
Let $X_t \coloneqq \pi^{-1}(t)$ be the fiber of $\pi$ over 
$t \in \A^1$. In particular, we denote by $X_0$ the central fiber of 
$\pi$. We fix $\mathbf{P}=(P_1,\ldots,P_\ell)$ a $\ell$-tuple of real points in the $n$-dimensional torus orbit of $X=X_1$. 
By \eqref{Eq:tilde_Z}-\eqref{Eq:tilde_Z_0}, the affine subspaces 
$A_j$ and the points $P_j$
define incidence conditions 
$\mathcal{Z}_{A_j,P_j} \subset \mathcal{X}$
and 
$Z_{A_j,P_j,t} = \mathcal{Z}_{A_j,P_j} \cap X_t \subset X_t$. 
We denote 
$\mathbf{\mathcal{Z}}_{\mathbf{A},\mathbf{P}}
\coloneqq (\mathcal{Z}_{A_1,P_1}, \ldots, \mathcal{Z}_{A_\ell,P_\ell})$
and 
$\mathbf{Z}_{\mathbf{A},\mathbf{P},t} 
\coloneqq (Z_{A_1,P_1,t},\ldots,Z_{A_{\ell},P_{\ell},t})$.
Because the points $P_j$ are taken to be real, the subvarieties 
$\mathcal{Z}_{A_j,P_j}$ are real subvarieties of 
$\mathcal{X}$, and for $t \in \A^1(\RR)$, 
$Z_{A_j,P_j,t}$ is a real subvariety of $X_t$.

For every $t \in \A^1(\R) \setminus \{0\} \simeq \RR^{\times}$, the \emph{number of real curves in $X_t$ of degree $\Delta$
and constrained by $\mathbf{Z}_{\mathbf{A},\mathbf{P},t}$}, which we denote by
$N_{(g,\Delta, \mathbf{A},\mathbf{P}),t}^{\RR-log}$,
is defined as follows.
Recall that the toric prime divisors on $X$ are in correspondence with primitive generators of the rays of the toric fan $\Sigma$ for $X$. 
Denote
by $D_v$ the toric divisor corresponding to $v\in M$. For a torically transverse real stable map $\varphi:C\to
X$ (Definition \ref{Def: torically transverse}), and for $\lambda\in \NN$, define $P_{\lambda}$ as the number of points of multiplicity $\lambda$ in $\varphi^* D_v$, and define a map $\Delta(\varphi):M\setminus\{0\}\to \NN$ by setting
\begin{equation}
\label{Eq: Delta-varphi}
    \Delta(\varphi)(\lambda \cdot v) = \begin{cases} 
      0 & \mathrm{if }~ \QQ_{\geq 0} v\not\in \Sigma^{[1]} \\
   P_{\lambda} &  \mathrm{othewise} \,.
   \end{cases}
\end{equation}
 So, $\Delta(\varphi)$ records the data of the number of intersection
points with the toric prime divisors of any given multiplicity, and clearly has finite support. 
Let $M^{\RR-log}_{(g,\Delta,\mathbf{A},\mathbf{P}),t}$ 
be the set of isomorphism classes of real torically transverse $\ell$-marked
stable maps $(\varphi:C\to X_t,\mathbf x)$ of genus $g$ and
matching the constraints $\mathbf{Z}_{\mathbf{A},\mathbf{P},t}$. Define
\begin{equation}
\label{Eq: real curves}    
N_{(g,\Delta, \mathbf{A},\mathbf{P}),t}^\text{$\RR$-log}
\coloneqq
\sharp\, M_{(g,\Delta,\mathbf{A},\mathbf{P}),t}^{\R-log}\,.
\end{equation}
The count of real curves  
$N_{(g,\Delta, \mathbf{A},\mathbf{P}),t}^\text{$\RR$-log}$ is a piecewise constant function of 
$t \in \A^1(\R) \setminus \{0\} \simeq \mathbb{R}^{\times}$, possibly jumping at the finitely many values of $t$ for which the 
constraints $\mathbf{Z}_{\mathbf{A},\mathbf{P},t}$ become non-generic. We denote by $t_+$ (resp.\ $t_-$) the smallest positive (resp.\ biggest negative) value of $t$ at which $N_{(g,\Delta, \mathbf{A},\mathbf{P}),t}^\text{$\RR$-log}$ jumps. In particular, $N_{(g,\Delta, \mathbf{A},\mathbf{P}),t}^\text{$\RR$-log}$ is a locally constant function of $t$ for 
 $t \in (t_-,t_+) \setminus \{0\}$.

We now define a tropical count. 
Let
$\T_{g,\ell,\Delta}(\mathbf A)$
be the set of genus $g$ $\ell$-marked tropical curves of degree $\Delta$ and  matching $\mathbf{A}$. This set is finite by Proposition 
\ref{prop_finite_tropical}.
Define
\begin{equation}
    \label{Eq: the tropical count}
     N_{(g,\Delta,\mathbf{A},\mathbf{P})}^\text{$\RR$-trop}
     \coloneqq 
\sum_{(\Gamma,\mathbf E,h)\in \T_{g,\ell,\Delta}(\mathbf A)}
w^{\RR}(\Gamma,\mathbf{E})\cdot\mathcal{D}^{\RR}_{\mathcal{T}_h,\sigma} \cdot \prod_{i=1}^\ell \mathcal{D}^{\RR}_{\mathcal{A}_i} \,,
\end{equation}
where $w^{\RR}(\Gamma,\mathbf{E})$ is the total real weight defined as in \eqref{Eq: Total real weight},  $\mathcal{D}^{\RR}_{\mathcal{T},\sigma}$ is the twisted real lattice index of the map $\mathcal{T}_h$ (see \eqref{eq:NS map}), and $\mathcal{D}^{\RR}_{\mathcal{A}_i}$ is the real lattice index of the inclusion of lattices $\mathcal{A}_i$ defined in 
\eqref{Eq: lattice marked points} -- recall that the real lattice index is defined in Definition \ref{Def: real index} and the twisted real lattice index is defined in 
Definition \ref{Def:twisted real index}. Remark that, unlike its complex analogue, the count of tropical curves 
$ N_{(g,\Delta,\mathbf{A},\mathbf{P})}^\text{$\RR$-trop}$ depends on $\mathbf{P}$ through the twist by $\sigma$ in the definition of $\mathcal{D}^{\RR}_{\mathcal{T}_h,\sigma}$.

\begin{theorem}\label{main theorem}
For every tuple $(g,\Delta, \mathbf{A}, \mathbf{P})$ as above, we have 
\begin{equation}
\label{Eq: tropical counts equalts log count}    
N_{(g,\Delta,\mathbf{A},\mathbf{P}),t}^\text{$\RR$-log}
=N_{(g,\Delta,\mathbf{A},\mathbf{P})}^\text{$\RR$-trop}
\end{equation}
for every $t \in (t_-,t_+) \setminus \{0\}$.
\end{theorem}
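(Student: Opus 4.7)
The plan is to string together the three counting/lifting results proved in the previous sections (Theorem \ref{Thm: counts of prelog curves}, Theorem \ref{Thm: lifting md curves to log maps}, Theorem \ref{thm: deformation}) in the manner of \cite[\S 8]{NS}, keeping careful track of real structures at each step. Fix a good polyhedral decomposition $\mathscr{P}$ for $(g,\Delta,\mathbf{A})$, producing the toric degeneration $\pi\colon \mathcal{X}\to \A^1$, and view everything over $\R$. The strategy is to parametrize $M^{\R-log}_{(g,\Delta,\mathbf{A},\mathbf{P}),t}$ for $t$ real and sufficiently close to $0$ by the data
\[
\bigl((\Gamma,\mathbf{E},h),\,(\underline{\varphi}_0,\mathbf{x}_0),\,\text{log enhancement of }\varphi_0\bigr),
\]
where $(\Gamma,\mathbf{E},h)\in\T_{g,\ell,\Delta}(\mathbf{A})$, where $(\underline{\varphi}_0,\mathbf{x}_0)$ is a maximally degenerate real stable map to $X_0$ matching $\mathbf{Z}_{\mathbf{A},\mathbf{P},0}$ with tropicalization $(\Gamma,\mathbf{E},h)$ and marked points among the real intersection points of $\underline{\varphi}_0(\underline{C}_0)$ with the constraints, and where the log enhancement is a real stable log lift to $O_0$.

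I would first fix a tropical curve $(\Gamma,\mathbf{E},h)\in\T_{g,\ell,\Delta}(\mathbf{A})$ and count its contribution to $N^{\R-log}_{(g,\Delta,\mathbf{A},\mathbf{P}),t}$ via a four-step multiplication. Step one: by Theorem \ref{Thm: counts of prelog curves}, the number of maximally degenerate real stable maps $\underline{\varphi}_0\colon\underline{C}_0\to X_0$ with tropicalization $(\Gamma,\mathbf{E},h)$ and matching the incidence conditions equals $\mathcal{D}^{\R}_{\mathcal{T}_h,\sigma}$. Step two: for each such $\underline{\varphi}_0$, the choice of the $\ell$-tuple of real marked points $\mathbf{x}_0$ with $\underline{\varphi}_0(x_{0,j})\in Z_{A_j,P_j,0}$ contributes the factor $\prod_{j=1}^{\ell}\mathcal{D}^{\R}_{\mathcal{A}_j}$ by Proposition \ref{Prop: number of intersection points}. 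Step three: Theorem \ref{Thm: lifting md curves to log maps} gives $w^{\R}(\Gamma,\mathbf{E})$ isomorphism classes of real stable log maps $\varphi_0\colon C_0/O_0\to X_0$ with underlying marked stable map $(\underline{\varphi}_0,\mathbf{x}_0)$. Multiplying these counts yields exactly the summand
\[
w^{\R}(\Gamma,\mathbf{E})\cdot \mathcal{D}^{\R}_{\mathcal{T}_h,\sigma}\cdot \prod_{j=1}^{\ell}\mathcal{D}^{\R}_{\mathcal{A}_j}
\]
appearing in \eqref{Eq: the tropical count}. Step four: by Theorem \ref{thm: deformation}, each such $(\varphi_0,\mathbf{x}_0)$ admits a unique real stable log lift $(\varphi_\infty\colon C_\infty/O_\infty\to\mathcal{X},\mathbf{x}_\infty)$ over $O_\infty=\Spec\C\lfor t\rfor$ matching $\mathcal{Z}_{\mathbf{A},\mathbf{P}}$, which we then algebraize and specialize to a real stable log map over a real analytic neighbourhood of $0\in\A^1(\R)$, giving for each real $t\neq 0$ small a unique $(\varphi_t\colon C_t\to X_t,\mathbf{x}_t)$ in $M^{\R-log}_{(g,\Delta,\mathbf{A},\mathbf{P}),t}$. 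Summing over $(\Gamma,\mathbf{E},h)$ produces $N^{\R-trop}_{(g,\Delta,\mathbf{A},\mathbf{P})}$ real stable log maps on $X_t$ arising from this construction.

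The main obstacle is the converse inclusion, i.e., showing that for $t$ sufficiently close to $0$ every element of $M^{\R-log}_{(g,\Delta,\mathbf{A},\mathbf{P}),t}$ arises from the construction above. Following the complex argument of \cite[\S 8]{NS}, I would take a sequence $t_n\to 0$ and a sequence of real stable log maps $\varphi_{t_n}\colon C_{t_n}\to X_{t_n}$. By properness of the moduli of stable (log) maps applied to the real structure (i.e., using that the fixed locus of a real structure on a proper moduli space is compact), one extracts a limit $\varphi_0\colon C_0\to X_0$, which is a real stable log map whose underlying map is torically transverse. Its tropicalization lies in $\T_{g,\ell,\Delta}(\mathbf{A})$ and must be non-superabundant by the assumption that $\mathbf{A}$ is general for $(g,\Delta)$; this forces $\varphi_0$ to be maximally degenerate and to match $\mathbf{Z}_{\mathbf{A},\mathbf{P},0}$, placing it within the enumeration already performed. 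By the uniqueness of the deformation in Theorem \ref{thm: deformation}, the sequence $\varphi_{t_n}$ coincides for large $n$ with the unique real lift produced from $\varphi_0$, giving a bijection between the two sets.

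Two subtleties deserve care and are where the real case diverges from \cite{NS}. The first is that one must work throughout with the real locus of the moduli spaces and use that the $O_k^{\R}$-deformations controlled by $H^0(C_0^{\R},\cN_{\varphi_0}^{\R})$ (Lemma \ref{lem_defo_torsor}) algebraize over a real analytic disk rather than over a formal disk, for which the standard argument of \cite[Corollary 7.4, Theorem 8.3]{NS} extends verbatim since everything is étale-locally toric. The second subtlety, which is the real source of the twist by $\sigma$, is the possibility that $\mathcal{T}_h\otimes\R^{\times}$ fails to be surjective for some tropical curves, in which case that curve simply contributes $0$ by the second case of Definition \ref{Def:twisted real index}. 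This automatically matches the log side, since the obstruction to gluing real lines matching the real constraints is precisely that the image of $\sigma$ in $\Coker(\mathcal{T}_h)\otimes \R^{\times}$ is nonzero.
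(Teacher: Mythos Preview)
Your proposal is correct and follows essentially the same four-step structure as the paper's proof: count maximally degenerate real maps via Theorem~\ref{Thm: counts of prelog curves}, count real marked points via Proposition~\ref{Prop: number of intersection points}, count real log enhancements via Theorem~\ref{Thm: lifting md curves to log maps}, and deform uniquely via Theorem~\ref{thm: deformation}.

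The only notable difference is in the converse direction. You argue via sequential compactness of the real locus of the moduli space, essentially invoking \cite[\S 6--8]{NS}. The paper also refers to this route, but in addition supplies a more conceptual alternative: apply the stable reduction theorem for \emph{basic} stable log maps \cite[Theorem~4.1]{GSlogGW} over $\Spec\C\lfor t\rfor$, then use the tropicalization construction of \cite[Appendix~B]{GSlogGW} to see directly that the limiting tropical curve lies in $\T_{g,\ell,\Delta}(\mathbf{A})$; since $\mathscr{P}$ was chosen good for $(g,\Delta,\mathbf{A})$, this forces the limit to be maximally degenerate over the standard log point. This bypasses the explicit toric-transversality analysis of \cite[\S 6]{NS}. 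One small point of order in your write-up: you assert toric transversality of the limit \emph{before} identifying its tropicalization, but in fact the transversality is a consequence of the tropicalization landing in $\T_{g,\ell,\Delta}(\mathbf{A})$ combined with the goodness of $\mathscr{P}$, so the logical dependence runs the other way.
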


\begin{proof}
By Theorem \ref{Thm: counts of prelog curves}, $\mathcal{D}^{\RR}_{\mathcal{T}_h, \sigma}$ is the number of 
real maximally degenerate curves in $X_0$ intersecting 
$\mathbf{Z}_{\mathbf{A}, \mathbf{P},0}$ and with associated tropical curve 
$h \colon \Gamma \rightarrow M_\RR$.
For every $1 \leq j \leq \ell$,
the image of such real maximally degenerate curve 
$\underline{\varphi}_0 \colon \underline{C}_0 \rightarrow X_0$
intersects $\mathbf{Z}_{\mathbf{A}, \mathbf{P},0}$ in $\mathcal{D}^{\RR}_{\mathcal{A}_j}$ real points, where $\mathcal{A}_j$ is the index of the inclusion of lattices defined in Proposition \ref{Prop: number of intersection points}. Thus, 
there are $\mathcal{D}^{\RR}_{\mathcal{A}_j}$ choices of real marked points mapping to 
$\mathbf{Z}_{\mathbf{A}, \mathbf{P},0}$. Moreover, by Theorem \ref{Thm: lifting md curves to log maps}, there are $w^{\RR}(\Gamma, \mathbf{E})$ pairwise nonisomorphic ways to
lift the real stable map $\underline{\varphi}_0$ to a real stable log map $\varphi_0 \colon C_0 \rightarrow X_0$.
Therefore, 
$N_{(g,\Delta,\mathbf{A},\mathbf{P})}^\text{$\RR$-trop}$ is the number of 
real stable log maps to $X_0$ lifting maximally degenerate stable maps 
$\underline{\varphi}_0 \colon \underline{C}_0
\rightarrow X_0$
passing through the constraints 
$\mathbf{Z}_{\mathbf{A}, \mathbf{P},0}$.

By Theorem \ref{thm: deformation}, each real stable log map 
$\varphi_0 \colon C_0 \rightarrow X_0$ obtained as real log lift of a 
maximally degenerate stable map
$\underline{\varphi}_0 \colon \underline{C}_0
\rightarrow X_0$ passing through the constraints 
$\mathbf{Z}_{\mathbf{A}, \mathbf{P},0}$
deforms in a unique way in family of real stable log maps
$\varphi_{\infty} \colon C_{\infty} \rightarrow \mathcal{X}$
over $\Spec \C\lfor t \rfor$
passing through the constraints 
$\mathbf{\mathcal{Z}}_{\mathbf{A},\mathbf{P}}$.

Conversely, we have to show that every real stable log map 
$\varphi \colon C \rightarrow \mathcal{X} \times_{\C \lfor t \rfor } 
\C(\!(t)\!)$ 
defined over the point
$\Spec \C (\!(t)\!)$ (with trivial log structure)
and passing through the constraints 
$\mathbf{\mathcal{Z}}_{\mathbf{A},\mathbf{P}}\times_{\C \lfor t \rfor } 
\C(\!(t)\!)$  is obtained by deformation of a real maximally degenerate stable stable log map 
$\varphi_0 \colon C_0 \rightarrow X_0$ defined over the standard log point.
Without the reality condition, it is proved in the proof of 
\cite[Theorem 8.3]{NS}. The real case follows immediately:
a limit of real stable log maps is  real and so if 
$\varphi \colon C \rightarrow \mathcal{X} \times_{\C \lfor t \rfor } 
\C(\!(t)\!)$ is a deformation of $\varphi_0 \colon C_0 \rightarrow X_0$, 
then $\varphi_0 \colon C_0 \rightarrow X_0$ is automatically a real stable log map.

Using the general theory of basic stable log maps developed in
\cite{GSlogGW} (see Remark \ref{rem_basic}), we give a proof that $\varphi$ is a deformation of some $\varphi_0$ 
which is distinct and more conceptual that the one given in the proof of
\cite[Theorem 8.3]{NS} (which relies on the full \cite[Section 6]{NS}).
By the stable reduction theorem for basic stable log maps 
(\cite[Theorem 4.1]{GSlogGW}), after a finite base change, there exists an extension of 
$\varphi \colon C \rightarrow \mathcal{X} \times_{\C \lfor t \rfor } 
\C(\!(t)\!)$ into a basic stable log map 
$\overline{\varphi} \colon \overline{C} \rightarrow \mathcal{X}$
over $\Spec \C \lfor t \rfor$
endowed with a possibly non-trivial log structure, and passing through the constraints 
$\mathbf{\mathcal{Z}}_{\mathbf{A},\mathbf{P}}$.
By restriction to the special fiber, we get a basic stable log map 
$\varphi_0 \colon C_0 \rightarrow X_0$ over $\Spec \C$ endowed with some log structure.
Then, by the construction of a tropicalization of stable log maps 
(see \cite[Appendix B]{GSlogGW}, 
\cite[Construction 8.3]{A_corals_2}
) the tropicalization of 
$\varphi_0 \colon C_0 \rightarrow X_0$ is a $\ell$-marked tropical curve of degree 
$\Delta$ matching $\mathbf{A}$, that is an element of 
$\T_{g,\ell,\Delta}(\mathbf A)$. As by construction, the polyhedral decomposition defining our 
toric degeneration contains all the tropical curves in $\T_{g,\ell,\Delta}(\mathbf A)$, 
it follows that the stable map underlying
$\varphi_0 \colon C_0 \rightarrow X_0$
is maximally degenerate, that the basic monoid is equal to 
$\NN$ and so that $\varphi_0$ is defined over the standard log point $O_0$.
\end{proof}

\section{Tropical Welschinger signs in dimension two}
\label{Sec: tropical W signs}
In this section we set $n=2$ and we define tropical Welschinger 
multiplicities for trivalent tropical curves in $M_{\R}=\R^2$, following 
\cite{Sh0},\cite{Mi},\cite[\S2.5.1]{IMS}.

\subsection{Tropical Welschinger signs and the dual subdivision}
\label{Generalised tropical Welschinger signs and the dual subdivision}

\begin{definition}
\label{Def: dual triangle}
Let $h \colon \Gamma \to \RR^2$ be a trivalent tropical curve
and let $V$ be a vertex of $\Gamma$.
The \emph{dual triangle} $\Delta_V$ is the cell associated to $V$
in the dual subdivision of $h \colon \Gamma \to \RR^2$
(see \cite[Section 3.2]{IMS}).
Explicitly, $\Delta_V$ is the integral triangle with inner normals given by the 
direction vectors of the edges adjacent to $V$, and 
with sides of integral length equal to the weight of the corresponding edge.
\end{definition}

\begin{definition}
\label{Def:tropical W sign}
Let $h \colon \Gamma \to \RR^2$ be a trivalent tropical curve and let $V$
be a vertex of $\Gamma$. Let $\Delta_V$ be the triangle dual to $V$.
We define
\begin{equation}
    \mathrm{Mult}_\RR(V) \coloneqq (-1)^{I_{\Delta_V}} \,,
\end{equation}
where $I_{\Delta_V}$ is the number of integral points in the interior of 
$\Delta_V$.
\end{definition}

\begin{definition}
\label{Def:tropical W mult}
Let $h \colon \Gamma \to \RR^2$ be a trivalent tropical curve. We define
\begin{equation} \label{eq_mult_R_h}
\mathrm{Mult}_{\RR}(h) \coloneqq \begin{cases} 
      0 & \mathrm{if}~\Gamma~\mathrm{contains~a~bounded~edge~of~even~weight} \\
      \prod_{V \in \Gamma^{[0]}} \mathrm{Mult}_{\R}(V)  & \mathrm{else}.
   \end{cases}
\end{equation}
\end{definition}

\subsection{Tropical Welschinger signs: reformulation}

First recall that given a trivalent tropical curve $h \colon \Gamma \to \RR^2$, 
the \emph{complex vertex multiplicity} is defined as follows \cite{Mi}. For each vertex $V\in \Gamma^{[0]}$, 
consider two different edges $E_1,E_2$ emanating from $V$ and set
\begin{equation}
\label{Eq: vertex multiplicity}    
    \mathrm{Mult}(V) \coloneqq   w(E_1) \cdot w(E_2) 
\cdot |\mathrm{det}(u_{(V,E_1)}, u_{(V,E_2))})|
\end{equation}
where $w(E_i)$ is the weight on the edge $E_i$, and $u_{(V,E_i)}$ 
are primitive integral vectors emanating from $h(V)$ in the direction of $h(E_i)$. 
By the balancing condition \eqref{Eq:balancing_condition} 
this number does not depend on the choices of $E_1$ and $E_2$.

\begin{lemma} 
Let $h\colon \Gamma \to \RR^2$ be a trivalent tropical curve
and let $V$ be a vertex of $\Gamma$.
Let $E_1$, $E_2$, $E_3$ be the edges of $\Gamma$ emanating from $V$ with weights
$w(E_1)$, $w(E_2)$, $w(E_3)$. 
Then
\begin{equation}\label{eq_real_mult_reformulation}
\mathrm{Mult}_{\RR}(V)= 
      (-1)^{\frac{1}{2}(\mathrm{Mult}(V)-w(E_1)-w(E_2)-w(E_3))+1} \,.\end{equation}
\end{lemma}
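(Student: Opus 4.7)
The plan is to derive the formula from Pick's theorem applied to the dual triangle $\Delta_V$, after expressing both its area and its boundary lattice point count in terms of $\mathrm{Mult}(V)$ and the weights $w(E_i)$.

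First I would analyze the geometry of $\Delta_V$. By Definition~\ref{Def: dual triangle}, the edges of $\Delta_V$ are (up to translation) vectors $v_i$ perpendicular to the primitive direction vectors $u_{(V,E_i)}$, with integral length $w(E_i)$, so $v_i = w(E_i)\cdot \tilde{v}_i$ for a primitive integral vector $\tilde{v}_i$ with $\tilde{v}_i \perp u_{(V,E_i)}$ (rotation by $90^\circ$). A direct computation then gives
\[
|\tilde{v}_1 \wedge \tilde{v}_2| \;=\; |\det(u_{(V,E_1)}, u_{(V,E_2)})|,
\]
so the area of $\Delta_V$ equals
\[
\mathrm{Area}(\Delta_V) \;=\; \tfrac{1}{2}\,w(E_1)\,w(E_2)\,|\det(u_{(V,E_1)}, u_{(V,E_2)})| \;=\; \tfrac{1}{2}\,\mathrm{Mult}(V),
\]
by \eqref{Eq: vertex multiplicity}.

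Next I would count the boundary lattice points of $\Delta_V$. Since the $i$-th side has integral length $w(E_i)$, it carries exactly $w(E_i)+1$ lattice points; summing and subtracting the three vertices (each counted twice) yields
\[
B_{\Delta_V} \;=\; w(E_1)+w(E_2)+w(E_3).
\]

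Finally, Pick's theorem gives $\mathrm{Area}(\Delta_V) = I_{\Delta_V} + B_{\Delta_V}/2 - 1$, hence
\[
I_{\Delta_V} \;=\; \tfrac{1}{2}\bigl(\mathrm{Mult}(V) - w(E_1) - w(E_2) - w(E_3)\bigr) + 1,
\]
and the claim follows from Definition~\ref{Def:tropical W sign}. There is no substantive obstacle here; the only point to verify carefully is the perpendicularity/rotation identification that converts the determinant of the direction vectors $u_{(V,E_i)}$ into the wedge of the edge directions of the dual triangle, and that the integer $\tfrac12(\mathrm{Mult}(V)-\sum_i w(E_i))$ is indeed an integer, which follows from $B_{\Delta_V}$ and $2\,\mathrm{Area}(\Delta_V)$ having the same parity as forced by Pick's theorem.
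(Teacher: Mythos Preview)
Your proof is correct and follows essentially the same approach as the paper: both identify $\mathrm{Mult}(V)$ as twice the area of $\Delta_V$, count $w(E_1)+w(E_2)+w(E_3)$ boundary lattice points, and apply Pick's formula to obtain $I_{\Delta_V}$. You supply a bit more detail on the rotation identification and the boundary count, but the argument is the same.
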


\begin{proof}
As the complex vertex multiplicity $\mathrm{Mult}(V)$ defined as in \eqref{Eq: vertex multiplicity} 
is twice the area of $\Delta_V$ and $w(E_1)+w(E_2)+w(E_3)$ is the number of integral points on the boundary of $\Delta_V$, we have 
\[ I_{\Delta_V}=\frac{1}{2}(\mathrm{Mult}(V)-w(E_1)-w(E_2)-w(E_3))+1 \]
by Pick's formula. Hence, the result follows from the definition of $\mathrm{Mult}_\RR(V)$.
\end{proof}

\begin{lemma} \label{lem_odd_multiplicity}
Let $h\colon \Gamma \to \RR^2$ be a trivalent tropical curve
and let $V$ be a vertex of $\Gamma$. 
Let $E_1$, $E_2$, $E_3$ be the edges of $\Gamma$
emanating from $V$ with weights $w(E_1)$, $w(E_3)$, $w(E_3)$.
If the weights $w(E_1)$, $w(E_2)$, $w(E_3)$
are odd, then the complex multiplicity $\mathrm{Mult}(V)$ defined as in \eqref{Eq: vertex multiplicity} is also odd.
\end{lemma}
\begin{proof}
For $i\in \{1,2,3\}$, let $u_{(V,E_i)}$ be the primitive integral vector emanating from $h(V)$ in the direction $h(E_i)$.
Set
\[m \coloneq \det(u_{(V,E_1)}, u_{(V,E_2)}) \,.\]
Using the action of $SL(2,\Z)$ on $\Z^2$, we can assume that 
$u_{(V,E_1)} = (1, 0)$ and $u_{(V,E_2)} = (a,m)$ with
$a$ coprime to $m$. 

By \eqref{Eq: vertex multiplicity}, we have 
$\mathrm{Mult}(V)=w(E_1)w(E_2)|m|$. As we are assuming that $w(E_1)$
and $w(E_2)$ are odd, it is enough to show that $m$ is odd to prove that $\mathrm{Mult}(V)$ is odd. 
We assume by contradiction that $m$ is even. Then, $a$ is odd since $u_{(V,E_2)}$ is primitive. By the tropical balancing condition \eqref{Eq:balancing_condition}, we have
\[ w(E_3)u_{(V,E_3)} = -w(E_1)u_{(V,E_1)}-w(E_2)u_{(V,E_2)} = -(w(E_1) + aw(E_2),m) \,.\]
As $w(E_1)$, $w(E_2)$, $a$ are odd, we deduce that $w(E_1)+aw(E_2)$ is even, and as we are assuming that $m$ is also even, that $w(E_3)u_{(V,E_3)}$ is divisible by $2$ in $\Z^2$. But as $u_{(V,E_3)}$ is primitive in $\Z^2$, this is only possible if $w(E_3)$ is even, contradiction.
\end{proof}

\subsection{Tropical Welschinger signs \`a la Mikhalkin}
In \cite[Defn. 7.19]{Mi}, Mikhalkin only considers tropical curves with 
unbounded edges of weight $1$ and uses a slightly different 
looking version of the tropical Welschinger sign.

\begin{definition}
Let $V$ be a trivalent vertex with complex multiplicity 
$\mathrm{Mult}(V)$. Then we define
\begin{equation} \label{eq_mult_R_M}
\mathrm{Mult}_{\RR}^M(V) \coloneqq \begin{cases} 
      (-1)^{\frac{\mathrm{Mult}(V)-1}{2}} & \mathrm{if}~\mathrm{Mult}(V)~\mathrm{is~ odd} \\
      0 & \mathrm{otherwise} 
   \end{cases}
\end{equation}
\end{definition}

\begin{definition}
\label{Def:mult}
Let $h \colon \Gamma \rightarrow \R^2$ be a trivalent tropical curve. We define
\begin{equation}
\label{Eq: tropical Mikhalkin}
\mathrm{Mult}^M_{\RR}(h)=  \prod_V \mathrm{Mult}^M_{\RR}(V)   
\end{equation}
\end{definition}

\begin{lemma}
Let $h \colon \Gamma \rightarrow \R^2$ be a trivalent tropical curve.
Assume that all the weights of unbounded edges of $\Gamma$ are odd.
If the number of unbounded edges of $h$ with weights congruent to $3 \mod 4$ is even, 
then \[ \mathrm{Mult}_\RR(h) = \mathrm{Mult}^M_{\RR}(h) \,.\]
If the number of unbounded edges of $h$ with weights congruent to $3 \mod 4$ is odd, then 
\[ \mathrm{Mult}_\RR(h) =- \mathrm{Mult}^M_{\RR}(h) \,.\]
\end{lemma}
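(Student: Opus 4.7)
The plan is to reduce the comparison to a local sign computation at each vertex via the preceding lemma, and then to collect signs globally by counting incidences. The essential input is a parity analysis of the balancing condition \eqref{Eq:balancing_condition} modulo~$2$.

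First I would dispose of the degenerate case. If some bounded edge of $\Gamma$ has even weight, then $\mathrm{Mult}_\RR(h)=0$ by Definition \ref{Def:tropical W mult}, and I must verify that $\mathrm{Mult}^M_\RR(h)=0$ as well. At each endpoint $V$ of such an edge, reducing $\sum_i w(E_i)u_{(V,E_i)}=0$ modulo~$2$ and using primitivity of the $u_{(V,E_i)}$ shows that the number of even-weight edges adjacent to $V$ can only be $0$, $1$, or $3$; in particular it is positive here. In both the one-even and three-even subcases a direct computation of $|\det(u_{(V,E_i)},u_{(V,E_j)})|$ modulo~$2$ yields $\mathrm{Mult}(V)$ even, so $\mathrm{Mult}^M_\RR(V)=0$ and hence $\mathrm{Mult}^M_\RR(h)=0$.

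In the remaining case all weights of $\Gamma$ are odd. Again by balancing modulo~$2$ together with primitivity of the $u_{(V,E_i)}$, the three vectors $u_{(V,E_1)}, u_{(V,E_2)}, u_{(V,E_3)}$ must have pairwise distinct classes in $(\ZZ/2\ZZ)^2\setminus\{0\}$, whence $|\det(u_{(V,E_i)},u_{(V,E_j)})|$ is odd and therefore $\mathrm{Mult}(V)$ is odd at every $V$. Combining the formula of the previous lemma with the definition of $\mathrm{Mult}^M_\RR(V)$ gives the local ratio
\[
\frac{\mathrm{Mult}_\RR(V)}{\mathrm{Mult}^M_\RR(V)} \;=\; (-1)^{\frac{\mathrm{Mult}(V)-w(E_1)-w(E_2)-w(E_3)}{2}+1-\frac{\mathrm{Mult}(V)-1}{2}} \;=\; (-1)^{\frac{3-w(E_1)-w(E_2)-w(E_3)}{2}}\,.
\]
Writing $w(E_i)=2k_i+1$, the exponent equals $-(k_1+k_2+k_3)$, which modulo~$2$ counts the number of edges $E_i$ adjacent to $V$ with $w(E_i)\equiv 3\pmod 4$.

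Finally I would take the product over all $V\in\Gamma^{[0]}$. The resulting global exponent $\sum_{V}\#\{E\text{ adjacent to }V : w(E)\equiv 3\pmod 4\}$ counts each bounded edge twice and each unbounded edge once, hence reduces modulo~$2$ to the number of unbounded edges of weight congruent to $3$ modulo~$4$. This yields $\mathrm{Mult}_\RR(h) = (-1)^{N}\mathrm{Mult}^M_\RR(h)$, where $N$ is that number, which is precisely the claimed dichotomy. No step is genuinely hard; the only mild subtlety is matching the two vanishing loci, which is handled by the modulo-$2$ analysis of the balancing condition described above.
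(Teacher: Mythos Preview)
Your proof is correct and follows essentially the same route as the paper: compare $\mathrm{Mult}_\RR(V)$ to $\mathrm{Mult}^M_\RR(V)$ vertex by vertex using the preceding lemma, then collect the resulting signs via the bounded-twice/unbounded-once incidence count. Two minor remarks: in the degenerate case you work harder than necessary, since an even-weight edge $E$ at an endpoint $V$ gives $\mathrm{Mult}(V)=w(E)\,w(E')\,|\det(u_E,u_{E'})|$ even directly, without any mod-$2$ balancing analysis; conversely, your explicit check that $\mathrm{Mult}(V)$ is odd when all weights are odd is a detail the paper leaves implicit, and your version avoids the paper's informal half-integer exponents (such as $(-1)^{\mathrm{Mult}(V)/2}$) by keeping the ratio as a well-defined integer exponent throughout.
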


\begin{proof}
If $\Gamma$ contains a bounded edge of even weight, then 
$\mathrm{Mult}_\RR(h)=0$ by \eqref{eq_mult_R_h}, 
and $\Gamma$ contains a vertex of even complex multiplicity by 
\eqref{Eq: vertex multiplicity}, and so also $\mathrm{Mult}^M_{\RR}(h)=0$ by
\eqref{eq_mult_R_M}-\eqref{Eq: tropical Mikhalkin}.

Hence, we can assume that all
bounded edges of 
$\Gamma$ have odd weight for the remainder of the proof.
By assumption, all unbounded weights of $\Gamma$ are odd, so we are now assuming that 
all edges of $\Gamma$ have odd weights.
As every bounded edge is adjacent to two vertices whereas an unbounded is adjacent to only one vertex, we deduce from 
\eqref{eq_real_mult_reformulation} that
\[ \mathrm{Mult}_\RR(h)
=\prod_{V \in \Gamma^{[0]}} (-1)^{\frac{\mathrm{Mult}(V)}{2}+1}
\prod_{ E \in \Gamma^{[1]} \setminus \Gamma^{[1]}_\infty } (-1)^{-w(E)}
\prod_{E \in \Gamma^{[1]}_\infty} (-1)^{-\frac{w(E)}{2}} \,.\]
On the other hand, as we are assuming that all edges of $\Gamma$ have odd weight, we deduce from Lemma \ref{lem_odd_multiplicity} that all vertices of $\Gamma$ of odd complex multiplicity and so
\[ \mathrm{Mult}_\RR^M(h)=\prod_{V \in \Gamma^{[0]}} (-1)^{\frac{\mathrm{Mult}(V)-1}{2}} \]
by \eqref{eq_mult_R_M}-\eqref{Eq: tropical Mikhalkin}.
Therefore, $\mathrm{Mult}_\RR(h)$ and $\prod_V \mathrm{Mult}_\RR^M(V)$ differ by the sign given by $-1$ to the power 
\[ \frac{3}{2}\sharp \Gamma^{[0]}-\sharp (\Gamma^{[1]} \setminus 
\Gamma^{[1]}_\infty)-\frac{1}{2}\sharp \Gamma^{[1]}_\infty+\sharp \{ E \in 
\Gamma^{[1]}_\infty \,|\, w(E)=3 \mod 4\}\]
\[ = \frac{1}{2} (3\sharp \Gamma^{[0]}-2\sharp (\Gamma^{[1]} \setminus 
\Gamma^{[1]}_\infty)-\sharp \Gamma^{[1]}_\infty)
+\sharp \{ E \in 
\Gamma^{[1]}_\infty \,|\, w(E)=3 \mod 4\} \,.\]
As $\Gamma$ is a trivalent graph, we have 
\[3 \sharp \Gamma^{[0]}=2 \sharp (\Gamma^{[1]} \setminus 
\Gamma^{[1]}_\infty) + \sharp \Gamma^{[1]}_\infty \,,\]
and so $\mathrm{Mult}_\RR(h)$ and $\prod_V \mathrm{Mult}_\RR^M(V)$ differ by the sign given by $-1$ to the power 
\[ \sharp \{ E \in 
\Gamma^{[1]}_\infty \,|\, w(E)=3 \mod 4\} \,.\]
\end{proof}

\section{Log Welschinger signs and invariance in dimension two}
\label{Sec: Log W signs}

In this section, we specialize the setup of 
\S \ref{sec:The Main Theorem} to $n=2$
and zero-dimensional constraints 
$\mathbf{A}$. In 
\S \ref{Sec:nodes}, we show that for $t \in \R^{\times}$
sufficiently close to $0$ and for every
$(\varphi_t \colon C_t \rightarrow X_t)$
in $M_{(g,\Delta, \mathbf{A},\mathbf{P}),t}^{\R-log}$, all the singularities of the curve 
$\varphi_t(C_t)$ are nodes. 
In \S \ref{Sec:log W signs}, we introduce counts of $\varphi_t$
with Welschinger signs, defined by the real nature of the nodes of $\varphi_t(C_t)$.
In 
\S \ref{Sec: new nodes}-\ref{Sec: nodes preserved}, we study explicitly the real nature of the nodes of $\varphi_t(C_t)$. 
We deduce from this study 
Theorem 
\ref{Thm: log Welschinger equals tropical}
(=Theorem 
\ref{Thm: log Welschinger equals tropical intro}) 
in 
\S\ref{Sec: corr thm W signs}. 
In \S\ref{Sec:W invariants}, we discuss the relation with the symplectically defined Welschinger invariants.

\subsection{Nodes as only singularities}
\label{Sec:nodes}

We show in Proposition \ref{prop_only_nodes} below that
for $t$
sufficiently close to $0$, all the curves $\varphi_t(C_t)$ for
$(\varphi_t \colon C_t \rightarrow X_t)$
in $M_{(g,\Delta, \mathbf{A},\mathbf{P}),t}^{\R-log}$
are nodal.
We start by showing that the image curve of a line in a toric surface is nodal.

\begin{proposition} \label{prop_nodes}
Let $X$ be a complete toric surface and let 
\[ (\mathbf{u}, \mathbf{w})=((u_i)_{1\leq i \leq 3}, (w_i)_{1\leq i \leq 3}) \in M^3 \times (\NN \setminus \{0\})^3\]
with $u_i$ primitive and $\sum_{i=1}^3 w_i u_i=0$. 
Let $\varphi \colon \PP^1 \rightarrow X$ be a line in $X$
with type $(\mathbf{u}, \mathbf{w})$
(see Definition \ref{def lines}). 
Then, all the singularities of the image curve $\varphi(\PP^1)$
are nodes, that is ordinary double points, and are all contained in the $2$-dimensional torus orbit of $X$.
\end{proposition}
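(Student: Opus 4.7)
The plan is to exploit the explicit parametrization of lines from the proof of Lemma \ref{Lem: torsor}. First, reduce to $\dim X = 2$ as in that lemma; in the divalent case $\varphi$ is a totally ramified cover of projective lines, whose image is smooth, so nothing remains to prove. In the trivalent case, I will use the formula
\[
\varphi^*(z^n)(y) \;=\; \chi_\varphi(n)\prod_{i=1}^3 (y-y(q_i))^{(w_i u_i,n)},\qquad y(q_1)=-1,\ y(q_2)=0,\ y(q_3)=1.
\]
The argument then splits into three assertions: $\varphi$ is an immersion everywhere; the image is smooth at each boundary point; and each interior singularity is a simple node.

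For the immersion, I would pair the logarithmic derivative with any $n\in N$:
\[
\frac{d}{dy}\log\varphi^*(z^n) \;=\; \bigl\langle D(y),\,n\bigr\rangle,\qquad D(y)\;=\;\sum_{i=1}^3 \frac{w_i u_i}{y-y(q_i)}\in M\otimes\C.
\]
Using the balancing $\sum_i w_iu_i=0$, a short computation reduces $D(y)$, up to the nonvanishing scalar $1/[y(y^2-1)]$, to $-w_2u_2 + y\,v$ with $v\coloneqq -w_1u_1+w_3u_3\ne 0$; checking coefficients in a basis of $M_\Q$ shows this vector is nonzero for every interior $y$, so $\varphi$ is an immersion there. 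Nonvanishing of $d\varphi$ at each boundary point $q_j$ follows by pairing with an $n\in N$ satisfying $(u_j,n)=0$ and $(u_k,n)\ne 0$ for some $k\ne j$. For smoothness at $p=\varphi(q_j)$, I choose a basis $\{n_0,n'\}$ of $N$ with $(u_j,n_0)=1$ and $(u_j,n')=0$, giving local coordinates $(z^{n_0},z^{n'})$ on $X$ at $p$ in which $\varphi$ reads $y\mapsto ((y-y(q_j))^{w_j}u(y),\,c+(y-y(q_j))\tilde v(y))$ with $u,\tilde v$ nonvanishing at $y(q_j)$; eliminating $y-y(q_j)$ presents the image as the graph of an analytic function, hence smooth at $p$.

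For the interior singularities, the immersion property already rules out cusps. If $y_1\ne y_2$ are two interior preimages of the same point $p$, then the two branches at $p$ are smooth and a short exterior-algebra computation using the explicit $D(y)$ yields
\[
D(y_1)\wedge D(y_2) \;\propto\; w_2(y_1-y_2)\,\bigl(w_1\,u_1\wedge u_2+w_3\,u_2\wedge u_3\bigr),
\]
which is nonzero because $u_1\wedge u_2$ and $u_2\wedge u_3$ have the same sign when the rays $\Q u_1,\Q u_2,\Q u_3$ are arranged in cyclic order. Hence the two branches meet transversely, and every double point is a node.

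The main obstacle will be ruling out triple (or higher) interior points. Here my plan is to use Lemma \ref{Lem: torsor} to reduce to the standard line with $\chi_\varphi=1$, and then analyze the algebraic system $\varphi(y_1)=\varphi(y_2)=\varphi(y_3)$ by observing that each triple $((y_a-y(q_i))/(y_b-y(q_i)))_{i=1,2,3}$ must lie in the rank-one subgroup $L^\perp\subset(\C^\times)^3$ cut out by the rank-$2$ lattice $L=\{((w_iu_i,n))_i:n\in N\}$. Parametrizing $L^\perp$ explicitly and exploiting that the $y(q_i)$ are in arithmetic progression should reduce the existence of a triple point to a finite check that one can rule out. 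As a fallback, I would compare the arithmetic genus $p_a(\varphi(\PP^1))$ computed on $X$ via adjunction with the number of self-intersection pairs of $\varphi$; combined with the pairwise transversality already established and the inequality $\delta_p\ge\binom{k_p}{2}$ for a singular point with $k_p$ smooth branches, this genus-balance forces each singularity to have exactly two preimages, completing the proof.
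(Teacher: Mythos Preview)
Your immersion argument via the logarithmic derivative $D(y)=\sum_i w_iu_i/(y-y(q_i))$ and your transversality computation $D(y_1)\wedge D(y_2)\propto w_2(y_1-y_2)(w_1\,u_1\wedge u_2+w_3\,u_2\wedge u_3)\neq 0$ are correct and in fact cleaner than the paper's treatment of these two steps. The smoothness at the boundary points is also fine.

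The genuine gap is in excluding triple (or higher) preimages. Your first plan, to parametrize $L^\perp$ and reduce to a finite check, is not carried out and remains a sketch; as stated it is not a proof. Your fallback genus-balance argument does not do what you claim. Granting that $\varphi$ is birational onto its image (which is true here), one has $\sum_p \delta_p = p_a(C)$, and for a point with $k_p$ smooth pairwise-transverse branches $\delta_p=\binom{k_p}{2}$. Equating $\sum_p\binom{k_p}{2}$ with $p_a(C)$ only tells you that every singularity is an \emph{ordinary} $k_p$-fold point; it does \emph{not} force $k_p=2$. For example, three nodes and a single ordinary triple point both give total $\delta=3$. So the genus count cannot distinguish nodes from ordinary triple points, and this step fails.

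The paper's key idea for this step is different and worth knowing. After using the torus action to reduce to the standard line given as the composition of the linear embedding $\{1+x+y=0\}\hookrightarrow\PP^2$ with the monomial map $\psi_\C:(\C^\times)^2\to(\C^\times)^2$ determined by the lattice map $(1,0)\mapsto w_1u_1$, $(0,1)\mapsto w_2u_2$, one observes that any two preimages of a common image point differ by an element of $\ker\psi_\C$, hence by a pair of roots of unity. In particular they have the same moduli $|x|,|y|$. Fixing one preimage $(x_1,y_1)$ on the line and writing $(x_2,y_2)=(\zeta x_1,\xi y_1)$ with $|\zeta|=|\xi|=1$, the line equation $1+\zeta x_1+\xi y_1=0$ combined with $|\xi|=1$ becomes a quadratic in $\zeta$; since $\zeta=1$ is already a root, there is at most one other solution. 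This modulus trick is what rules out three preimages and is the missing ingredient in your argument.
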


\begin{proof}
Without loss of generality, we can assume that $X$ is the complete
toric surface whose toric fan is given by the rays $\Q u_i$.
Let $D_1, D_2, D_3$ be the toric divisors of $X$ corresponding to 
the rays $\Q u_1, \Q u_2, \Q u_3$.

We will first prove that $\varphi(\PP^1)$ does not have unibranch singularities.
As in the proof of Lemma \ref{Lem: torsor}, one can describe a real
line $\varphi \colon \PP^1  \rightarrow X$ in $X$ of type $(\mathbf{u},
\mathbf{w})$ by 
\begin{equation} \label{eq: varphi_nodes}
\varphi^{*}(z^n)=\chi_{\varphi}(n) \prod_{i=1}^3 (y-y(q_i))^{(w_i u_i, n)}
\end{equation}
for every $n \in N$, where $\chi_{\varphi}$ is a real character of $N$, 
$y$ the unique real coordinate on $\PP^1$ such that at the points $q_1, q_2, q_3$ of $\PP^1$ 
mapped by $\varphi$ on $D_1$, $D_2$, $D_3$, we have 
$y(q_1)=-1, y(q_2)=0, y(q_3)=1$.

Applying \eqref{eq: varphi_nodes} to 
$n=\det(-,w_1 u_1)$ and using $\sum_{i=1}^3 w_i u_i=0$,
we obtain 
\[ \varphi^{*}(z^{\det(-,w_1 u_1)})
= \chi_{\varphi}(n) \left(\frac{y-1}{y} \right)^{\mu}\,,\]
where $\mu \coloneqq w_1 w_2 \det(u_1, u_2)=-w_3w_2 \det(u_3,u_2)$.
Therefore, if $\mu >0$, $\frac{d}{dy}\varphi^{*}(z^{\det(-,w_1 u_1)}) \neq 0$ for $y \neq 1$ 
and so $\varphi(\PP^1)$ does not have a unibranch singularity for 
$y \neq 1$. If $\mu <0$,  $\frac{d}{dy}\varphi^{*}(z^{\det(-,w_1 u_1)}) \neq 0$ for $y \neq 0$ 
and so $\varphi(\PP^1)$ does not have a unibranch singularity for 
$y \neq 0$.
Similarly applying 
\eqref{eq: varphi_nodes} to 
$n= \det(-,w_2 u_2)$ and $n=\det(-,w_3 u_3)$, we obtain that 
$\varphi(\PP^1)$ does not have unibranch singularities at all.

Therefore, it remains to study multibranch singularities of 
$\varphi(C)$, created by distinct points of $C$ mapped by 
$\varphi$ to the same point. By the Definition \ref{def lines} of a line, 
there is a unique point of $\PP^1$ mapped by $\varphi$ 
to each intersection point of $\varphi(\PP^1)$ with the toric divisors of $X$, and so multibranch singularities of $\varphi(\PP^1)$
are necessarily contained in the $2$-dimensional torus orbit of $X$.
Our goal is to show that all multibranch singularities of 
$\varphi(\PP^1)$ are ordinary double points, that is double points with
distinct tangent lines.

By \cite[Proposition 5.5]{NS} (or see the proof of Lemma 
\ref{Lem: torsor} for a different argument), the natural action of $M \otimes_{\Z} \C^{\times}$ 
on the space of lines in $X$ of type $(\mathbf{u}, \mathbf{w})$ is transitive. 
Therefore, it is enough to prove the result for one specific line in $X$ of type $(\mathbf{u}, \mathbf{w})$.
Let $\psi \colon \Z^2 \rightarrow \Z^2$ be the map of lattices such that 
$(1,0) \mapsto w_1 u_1$, $(0,1) \mapsto w_2 u_2$, $(-1,-1) \mapsto w_3 u_3$, 
and let $\psi_{\C} \colon (\C^{\times})^2 \mapsto (\C^{\times})^2$
be the corresponding map of complex tori, which can be naturally 
compactified in a map $\overline{\psi}_{\C} \colon \PP^2 \rightarrow X$ of toric varieties.
Let $\iota$ be the linear embedding 
$\PP^1 \rightarrow \PP^1$ of image the closure of
$\{ (x,y) \in (\C^{\times})^2\,|\,1+x+y=0\}$.
We will study the line $\varphi \colon \PP^1 \rightarrow X$
of type $(\mathbf{u},\mathbf{w})$ defined by the composition 
$\varphi \coloneqq \overline{\psi}_{\C} \circ \iota$.

We prove that multibranch singularities of $\varphi(\PP^1)$ are double points. 
We have to prove that $3$ distinct points of $\PP^1$ cannot be mapped by $\varphi$ on the same point of $X$. 
It is enough to show that there are at most two points $(x_1,y_1), (x_2,y_2) \in (\C^{\times})^2$ 
on the line of equation $1+x+y=0$ and differing multiplicatively by an element of the kernel of $\Psi_{\C}$. 
As $\Psi_{\C}$ is a monomial map, 
coordinates of elements in the kernel of $\Psi_{\C}$ are roots of unity, 
and so it is enough to show that for given $(x_1,y_1) \in (\C^{\times})^2$
on the line of equation $1+x+y=0$, there is at most another point 
$(x_2,y_2) \in (\C^{\times})^2$ on the line with 
$|x_2|=|x_1|$, $|y_2|=|y_1|$. Writing $x_2=\zeta x_1 $ and 
$y_2=\xi y_1$ with $|\zeta|=|\xi|=1$, $\zeta \neq 1$, $\xi \neq 1$, we are looking for $\zeta$ and $\xi$
such that $1+\zeta x_1 + \xi y_1=0$. We can set 
$\xi=-\frac{1}{y_1}(1+\zeta x_1)$ only if 
$|\frac{1}{y_1}(1+\zeta x_1)|^2=1$, that is 
$\frac{1}{|y_1|^2}(1+\zeta  x_1)(1+\zeta^{-1} \overline{x_1})=1$,
which is a non-trivial degree two equation in $\zeta$, so with at most two roots.
As $1+x_1+y_1=0$, $\zeta=1$ is a root and so there is at most one root $\zeta$ with $\zeta \neq 1$.

It remains to show that double points of $\varphi(\PP^1)$
have distinct tangent lines. The map 
$\Psi_{\C} \colon (\C^{\times})^2 \rightarrow (\C^{\times})^2$
is of the form $(x,y) \mapsto (x'=x^a y^b, y'=x^c y^d)$
for some $a,b,c,d \in \Z$ with $ad-bc \neq 0$.
We compute the differential of $\Psi_\C$ restricted to the line $1+x+y=0$. We have 
\[ dx'= ax^{a-1}y^b dx+bx^a y^{b-1}dy
=\frac{a}{x} x' dx+\frac{b}{y} x' dy \]
\[ dy'= cx^{c-1}y^d dx+dx^c y^{d-1}dy
=\frac{c}{x} y' dx+\frac{d}{y} y' dy \,.\]
As $1+x+y=0$, we have $dx=-dy$ so 
\[ dx'
=\left( \frac{a}{x} -\frac{b}{y}\right) x' dx \]
\[ dy'
=\left( \frac{c}{x} -\frac{d}{y}\right) y' dx \,.\]
Therefore, the equation of the tangent at the point $(x',y')=\Psi_{\C}(x,y)$
is 
\[ \left( \frac{c}{x} -\frac{d}{y}\right) y' dx' + \left( \frac{a}{x} -\frac{b}{y}\right)x' dy'=0\,.\]
Let us assume that we have a double point formed by two distinct points 
$(x_1,y_1)$ and $(x_2,y_2)$ on the line $1+x+y=0$
mapped by $\Psi_{\C}$ to the same point 
$(x_1',y_1')=(x_2',y_2')$. If the tangent lines to 
$\varphi(\PP^1)$ at the nodes along the two branches coincide, then 
\[ \frac{ay_1-bx_1}{cy_1-dx_1}=\frac{ay_2-bx_2}{cy_2-dx_2}\,.\]
It follows that $(ad-bc)(x_1 y_2 -x_2 y_1)=0$, so 
 $x_1 y_2 =x_2 y_1$. As $y_2=-1-x_2$ and 
 $y_1=-1-x_1$, we obtain $x_1(1+x_2)=x_2(1+x_1)$ and so 
 $x_1=x_2$, and $y_1=y_2$, in contradiction with the assumption that 
 $(x_1,y_1)$ and $(x_2,y_2)$ are distinct.
\end{proof}

\begin{proposition}
\label{prop_only_nodes}
For every $t \in \A^1 \setminus \{0\}$
sufficiently close to $0$
and for every
$(\varphi_t \colon C_t \rightarrow X_t)$
in $M_{(g,\Delta, \mathbf{A},\mathbf{P}),t}^{\R-log}$,
the only singularities of the curves 
$\varphi_t(C_t)$ are nodes.
\end{proposition}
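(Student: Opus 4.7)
My plan is to combine Theorem \ref{thm: deformation} with Proposition \ref{prop_nodes} and a direct local analysis near the double locus of $X_0$. By Theorem \ref{thm: deformation}, for $t$ sufficiently close to $0$ every $\varphi_t \in M_{(g,\Delta, \mathbf{A},\mathbf{P}),t}^{\R-log}$ is the unique deformation of some maximally degenerate real stable log map $\varphi_0 \colon C_0 \to X_0$ matching the constraints $\mathbf{Z}_{\mathbf{A},\mathbf{P},0}$. Any singular point $p$ of $\varphi_t(C_t)$ specialises, as $t \to 0$, to some $p_0 \in \varphi_0(C_0)$, and I split according to whether $p_0$ lies in the smooth locus of $X_0$ (the interior of some toric component $X_v$) or on the double locus $(X_0)_{\mathrm{sing}}$.

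For $p_0$ in the interior of a component $X_v$, maximal degeneracy ensures that near $p_0$ the curve $\varphi_0(C_0)$ coincides with the image of a single line $\PP^1 \to X_v$; the divalent--lines alternative in Definition \ref{Def: maximally degenerate} contributes no singularity in the interior since its two components meet disjoint toric divisors. Proposition \ref{prop_nodes} then tells us that $p_0$ is an ordinary double point, meeting transversely from two smooth branches. Such a transverse double point of a map from a smooth curve to a smooth surface is stable under small perturbation of the parametrisation (it deforms to a nearby transverse double point, by an immediate application of the implicit function theorem on each branch), so $p$ is itself an ordinary node of $\varphi_t(C_t)$ for all sufficiently small $t \neq 0$.

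For $p_0$ on the double locus, I carry out a local analytic computation. At a generic point of $(X_0)_{\mathrm{sing}}$, $\mathcal{X}$ is toric and analytically isomorphic to $\{xy=t\}\subset \A^3_{x,y,z}$, so each fibre $X_t$ with $t\neq 0$ is smooth. By maximal degeneracy, every preimage of the double locus in $C_0$ is a node $q$ of $C_0$, and in a neighbourhood of $q$ Theorem \ref{Thm: structure of log curves} together with the chart computation of \S \ref{Sec: from real md to real log} presents $\varphi$ locally, after a suitable identification, as a monomial map of the form $(u,v) \mapsto (\zeta u^{w_E},\, v^{w_E},\, \psi(u,v))$ from $\{uv = t^{e}\}$ into $\{xy=t\}$, where $w_E$ is the weight of the edge $E$ through $q$ and $\psi$ parametrises the coordinate transverse to the double locus. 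For $t \neq 0$ small enough I verify directly --- mirroring the three-step template of the proof of Proposition \ref{prop_nodes}, i.e.\ no critical points, multibranch preimages are doubles, distinct tangent directions at each double --- that the image of this local map has only ordinary nodes.

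The main obstacle is this last local computation. The fractional exponents coming from the log chart $S_e$ must be tracked through the base change that turns $uv = t^{e}$ into a smooth parameter on $C_t$, and the tangent-direction calculation at a multibranch preimage requires that $\psi$ be generic enough to separate the $w_E$ sheets of the $(x,y)$-cover. Both points reduce to the monomial computations already carried out in the proof of Proposition \ref{prop_nodes}; after this reduction, the transversality of tangents at each multibranch preimage becomes the analogue of the determinant identity $(ad-bc)(x_1 y_2 - x_2 y_1)=0 \Rightarrow (x_1,y_1)=(x_2,y_2)$ that concludes that proof, and which remains valid here because the log structure pairs the node of weight $w_E$ with two monomial covers of the two transverse toric boundary directions of the same degree $w_E$.
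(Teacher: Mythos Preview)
The paper's proof is considerably shorter than yours: it simply observes that $\varphi_t(C_t)$ is a deformation of $\varphi_0(C_0)$, that $\varphi_0(C_0)$ is nodal by Proposition~\ref{prop_nodes}, and that nodality is preserved under small deformation; finiteness of $M_{(g,\Delta, \mathbf{A},\mathbf{P}),t}^{\R-log}$ then gives a uniform bound on $t$. No case distinction between the smooth locus and the double locus of $X_0$ is made.

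Your more explicit approach is not wrong, and your double-locus analysis is essentially the local computation the paper carries out later in \S\ref{Sec: new nodes} and Theorem~\ref{thm_signs_edges} (your $\psi$ is the specific function $d(u+v-1)$ appearing there, not merely a ``generic enough'' one). What your route buys is an honest verification at the points where $X_0$ is singular, which the paper's terse argument leaves implicit. One technical remark: your exponents are inverted --- in the paper's normalisation the domain relation is $uv = t^{e/\mu}$ and the target relation is $xy = t^{e}$, not the other way round.

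There is, however, an actual gap in your interior case. You assert that the divalent-lines alternative ``contributes no singularity in the interior since its two components meet disjoint toric divisors,'' but meeting disjoint toric divisors says nothing about whether the two image curves meet \emph{each other} in the big torus orbit of $X_v$ --- and they do, transversally, producing a node of $\varphi_0(C_0)$ (compare Lemma~\ref{lem:4-valent}). This is easy to patch (the intersection is a single transverse point, hence an ordinary node, and it deforms as such), but as written your case analysis omits it.
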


\begin{proof}
The map $(\varphi_t \colon C_t \rightarrow X_t)$ is 
a deformation of $(\varphi_0 \colon C_0 
\rightarrow X_0)$, so $\varphi_t(C_t)$ is a 
deformation of $\varphi_0(C_0)$.
As $\varphi_0$ is maximally degenerate, it follows from
Proposition \ref{prop_nodes}
that $\varphi_0(C_0)$ is a nodal curve. Therefore, 
$\varphi_t(C_t)$ is nodal for $t$ sufficiently close to $0$. 
As $M_{(g,\Delta, \mathbf{A},\mathbf{P}),t}^{\R-log}$ is finite, this holds for all $\varphi_t$ for $t$
sufficiently close to $0$.
\end{proof}

\subsection{Log Welschinger signs}
\label{Sec:log W signs}

We first recall the classification of possible types of nodes of a real nodal curve. These nodes can be of one of the following possible types.
\begin{itemize}
    \item[(i)] \emph{Elliptic (isolated) nodes:} These are real nodes $x^2 + y^2 = 0$ for a choice of local real coordinates $(x, y)$.
    \item[(ii)]  \emph{Hyperbolic (non-isolated) nodes:} These are real nodes, with local equation $x^2 - y^2 = 0$ for a choice of local real coordinates $(x, y)$.
    \item[(iii)]  \emph{Imaginary nodes:} These are nodes of $C$ that are at non-real points. Such nodes come in complex conjugate pairs.
\end{itemize}

\begin{definition} 
\label{Def: log Welschinger sign}
Let $\varphi \colon C \to X$ be a real stable log map in a toric surface $X$ with $\varphi(C)$ nodal.
The \emph{log Welschinger sign} of $\varphi$ is
\begin{equation}
    \label{Eq: mass log}
    \mathcal{W}^{\mathrm{log}}(\varphi) \coloneqq (-1)^{m(\varphi)}
\end{equation}
where $m(\varphi)$ is the total number of elliptic (isolated) nodes of $\varphi(C)$.
\end{definition}

Let $\varphi_t \colon C_t \rightarrow X_t$ be an element of $M_{(g,\Delta,\mathbf{A},\mathbf{P}),t}^{\R-log}$. 
By Proposition \ref{prop_only_nodes}, for 
$t \in \A^1(\R) \setminus \{0\} \simeq \R^{\times}$
sufficiently close to $0$, $\varphi_t(C_t)$ is a real nodal curve. 
We will study in the next sections the real types of these nodes. 
As $\varphi_t \colon C_t \rightarrow X_t$
is a deformation of $\varphi_0 \colon C_0 
\rightarrow X_0$, the nodes of $\varphi_t(C_t)$
 fall into one of the two following cases, see Figure 
 \ref{Fig: Nodes}.
\begin{itemize}
    \item[(i)] Nodes obtained by local isomorphic deformations of nodes of $\varphi_0(C_0)$ contained in the smooth locus of $X_0$.
    \item[(ii)] Nodes obtained by the non-trivial deformation of a node of $\varphi_0(C_0)$ contained in the double locus of $X_0$.
\end{itemize}

We study nodes of type (ii) in 
\S \ref{Sec: new nodes} and nodes of type (i) in 
\S \ref{Sec: nodes preserved}.

\subsection{New nodes generated during the deformation}
\label{Sec: new nodes}
Let $\varphi_0 \colon C_0 \to X_0$ be a maximally degenerate real stable log map, with associated tropical curve $(h \colon \Gamma 
\rightarrow M_{\R} \simeq \R^2) \in \T_{g,\ell,\Delta}(\mathbf A)$.
Let $(\varphi_t \colon C_t \rightarrow X_t) 
\in M^{\R-log}_{(g,\Delta,\mathbf{A},\mathbf{P}),t}$
be the deformation of $\varphi_0$ for 
$t \in \A^1(\R)\setminus \{0\} \simeq \R^{\times}$.
We focus on a node $P$ of $C_0$, corresponding to an edge 
$E$ of $\Gamma$ of weight 
$\mu \coloneqq w(E)$, and on the node of $\varphi_t(C_t)$
which are contracted to $\varphi_0(P)$ at $t=0$.
There are $\mu-1$ such nodes, corresponding to the 
$\mu -1$ interior integral points on the edge dual to $E$
in the subdivision dual to $\Gamma$. 
In this section, we analyze the real type of these nodes. 
The following Lemma will allow us in the proof of Theorem 
\ref{thm_signs_edges} to reduce the general case to a specific situation which can be handled explicitly.

\begin{lemma} \label{lem:blow_up}
Let $\varphi_0 \colon C_0 \to X_0$ be a real maximally degenerate stable log map with associated tropical curve 
$(h\colon \Gamma \to M_{\RR}\cong \RR^2) \in \T_{g,\ell,\Delta}(\mathbf A)$, and let $P\in C_0$ be a nodal point corresponding to an edge $E \in \Gamma^{[1]}$, 
of weight $\mu \coloneqq w(E)$. Denote the vertices adjacent to $E$ by $\partial E=\{ V_1,V_2 \}$, 
and let $\{E,E_i,E_i'\}$ be the set of edges adjacent to $V_i$. 
Then, by a suitable base change, a  blow-up
and a further degeneration of $X_0$, we can assume that
\[ u_{V_1,E_1}=(-1,0) \,, u_{V_1,E_1'}=(1,\mu) \,, 
u_{V_2,E_2}=(-1,0)\,, u_{V_2,E_2'}=(1,-\mu)\,,\]
as illustrated in Figure \ref{Fig: tropical node}.
\end{lemma}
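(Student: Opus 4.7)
The plan is to modify the toric degeneration $\pi \colon \mathcal{X} \to \mathbb{A}^1$ locally around the node $\varphi_0(P)$ by a sequence of standard operations (base change, blow-up, further toric degeneration) so that the combinatorial data at the vertices adjacent to $E$ takes the prescribed standard form, while preserving the real log deformations of $\varphi_0$ relevant to the analysis of the new nodes of $\varphi_t(C_t)$.

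First, apply a base change $t \mapsto t^k$ for $k$ sufficiently divisible. This replaces the polyhedral decomposition $\mathscr{P}$ by $k\mathscr{P}$: all primitive direction vectors are preserved, while all integral lengths are multiplied by $k$, providing room for further refinement near $E$.

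Second, refine $k\mathscr{P}$ in a neighbourhood of $E$ by inserting new trivalent vertices $V_1, V_2$ so that the edges adjacent to $V_i$ carry the prescribed primitive directions. Since $E$ has weight $\mu$ and primitive direction $\pm(0,1)$ (after a suitable $GL_2(\mathbb{Z})$ change of basis of $M$), the balancing condition \eqref{Eq:balancing_condition} forces the two complementary primitive directions $(-1,0)$ and $(1,\pm \mu)$ at each vertex to carry weight exactly $1$; in particular such a polyhedral refinement exists and is unique up to labelling. It corresponds to a blow-up of $\mathcal{X}$ which is an isomorphism over $\mathbb{A}^1 \setminus \{0\}$, so that the general fibres $X_t$ are unchanged. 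A further toric degeneration then realises the refined polyhedral decomposition as the polyhedral decomposition of a genuine toric degeneration in the sense of \S\ref{Sec: toric degenerations}, whose central fibre contains the standard toric surfaces $X_{V_1}, X_{V_2}$ with fans given by the prescribed directions.

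The main point to verify is that $\varphi_0$ extends to a real log map on the new central fibre and that its real log deformations $\varphi_t$ for $t \neq 0$ are unchanged. The first follows from Theorem \ref{Thm: lifting md curves to log maps} applied to the refined tropical picture: the newly inserted edges carry weight $1$, contributing trivial real multiplicity, so the lifting exists and is compatible with the original $\varphi_0$. The second is immediate, since the modifications occur only over $t=0$. The hardest part is arranging the local subdivision coherently with the global polyhedral decomposition $\mathscr{P}$, which is resolved by the base change in the first step, guaranteeing enough room near $E$ for the insertion. Once the standard local model is in place, the real types of the nodes of $\varphi_t(C_t)$ emerging from $\varphi_0(P)$ can be read off from the explicit toric data, as needed in \S\ref{Sec: new nodes}.
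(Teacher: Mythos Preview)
Your proposal has a genuine gap: you treat the required modification as a \emph{toric} operation, namely a refinement of the polyhedral decomposition $\mathscr{P}$ (equivalently a toric blow-up of $\mathcal{X}$), but the blow-up actually needed is \emph{non-toric}. The point $\varphi_0(P)$ lies in the big torus orbit of the divisor $X_E$, not on a toric stratum of $\mathcal{X}$; the coordinate $\gamma$ along $X_E$ centred at $\varphi_0(P)$ is not a toric monomial. Refining $\mathscr{P}$ corresponds to blowing up toric strata, which would not separate the branches of $\varphi_0(C_0)$ meeting at the specific point $\varphi_0(P)$. Your step ``insert new trivalent vertices $V_1,V_2$ with edges in directions $(-1,0)$ and $(1,\pm\mu)$'' therefore has no clear meaning as a modification of $(\mathscr{P},h)$: those new weight-$1$ edges have nowhere to go inside the original tropical picture, and the original edges $E_i,E_i'$ at the old vertices still carry their arbitrary directions.

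The paper's argument is instead a local \emph{reduction}: after the base change $t=s^2$, one blows up the explicit non-toric, non-reduced ideal $(x,y,s^\mu,s^{\mu-1}\gamma,\dots,s\gamma^{\mu-1},\gamma^\mu)$ supported at $\varphi_0(P)$. A direct toric-style computation identifies the exceptional divisor $S$ with the toric surface whose fan has rays $(-1,0),(1,\mu),(1,-\mu)$. The log map lifts with a new component $C_S\subset S$, and the key point is that $C_S$ meets $D_1,D_2\subset S$ with contact order $1$, so \emph{all} the nodes arising from smoothing the original node $P$ are now nodes of $\tilde\varphi(C_S)\subset S$. One then takes a \emph{separate} toric degeneration of $S$, in which $C_S$ tropicalises to the standard tropical curve $h_S$ with the prescribed edge directions. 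Thus the lemma is not asserting a global modification of $(\mathscr{P},h)$ but rather that the analysis of the nodes coming from $P$ can be carried out in the standard local model $h_S$. Your write-up misses both the non-toric nature of the blow-up and this reduction-to-$S$ structure.
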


\begin{figure} 
\center{\input{AfterShift.pspdftex}}
\caption{The tropical image around a nodal point after a suitable blow-up and base-change.}
\label{Fig: tropical node}
\end{figure}

\begin{proof}
Since $\varphi_0$ is torically transverse (Definition \ref{Def: torically transverse}), 
the image $h(E)$ of the edge $E$ is a segment of the polyhedral decomposition $\mathscr{P}$. 
Since the question is local near $\varphi_0(P)$ in $X_0$ and near 
$h(E)$ in $M_{\R}$, we can assume by a local rescaling of $M$ that the affine integral length of $h(E)$ equals $\mu$. 
Let $X_1$ and $X_2$ be the irreducible components of the central fiber of the toric degeneration corresponding to the vertices $h(V_1)$ and $h(V_2)$ of the polyhedral 
decomposition. The surfaces $X_1$ and $X_2$ intersect along a divisor $X_E$ corresponding to the 
edge $h(E)$ of the polyhedral decomposition. Let $C_1$ and $C_2$ be the two components of $C$ mapped by $\varphi$ to $X_1$ and $X_2$ respectively. 
Let $p \in C$ be node of $C$ given by the intersection point of $C_1$ and $C_2$. We have $\varphi(p) \in X_E$.

\'Etale locally near $\varphi(p)$, the total space $\mathcal{X}$ of the toric degeneration can be described as 
$\Spec \C[x,y,\gamma,t]/(xy-t^\mu)$, 
where $\varphi(p)$ is the point 
$x=y=t=\gamma=0$, $\gamma$ is a coordinate along $X_E$, $X_1$ is locally defined by $x=0$ and $X_2$ is locally defined by $y=0$, 
such that $h(C_1)$ is locally defined by an equation of the form $x=\gamma^\mu$ and $h(C_2)$ is locally defined by an equation of the form $y=\gamma^\mu$. 
Note that because $\gamma$ is centered around the point $\varphi(p)$, which is in the $1$-dimensional torus orbit of $X_E$,
$\gamma$ is not induced by a toric monomial.

We perform the base change $t=s^{2}$. The resulting total space can be locally described as 
$\Spec \C[x,y,\gamma,s]/(xy-s^{2 \mu})$.
We then blow-up the ideal 
\[ (x,y, s^{\mu}, s^{\mu-1} \gamma, \ldots, s \gamma^{\mu-1}, \gamma^\mu)\] of $\Spec \C[x,y,\gamma,s]/(xy-s^{2 \mu})$, 
that is a non-reduced version of the point $\varphi(p)$ defined by 
$x=y=\gamma=s=0$.
We denote by $\tilde{\mathcal{X}}$ the resulting total space, and $\pi \colon 
\tilde{\mathcal{X}} \rightarrow \mathcal{X}$ the blow-up morphism composed with the base change morphism. Note that because 
$\varphi(p)$ is not a torus fixed point of $\mathcal{X}$, $\tilde{\mathcal{X}}$ is not toric in general.

We show that the exceptional divisor $S$ is isomorphic to the toric surface whose 
fan is dual to the triangle with vertices $(0,1)$, $(0,-1)$, $(0,\mu)$. As the ideal  
$(x,y, s^{\mu}, s^{\mu-1} \gamma, \ldots, s \gamma^{\mu-1}, \gamma^\mu)$ of $\Spec \C[x,y,\gamma,s]/(xy-s^{2 \mu})$ is monomial, the blow-up
can be studied using the tools of toric geometry. The moment cone of 
$\Spec \C[x,y,\gamma,s]/(xy-s^{2 \mu})$ is isomorphic to the cone $\mathcal{C}$ in $\Z^3$
generated by $(\mu,1,0)$, $(\mu,-1,0)$, $(0,0,1)$, with $(\mu,1,0)$
corresponding to $x$, $(\mu,-1,0)$ corresponding to $y$, 
$(0,0,1)$ corresponding to $\gamma$ and 
$(1,0,0)$ corresponding to $s$. The linear relation 
\[ (\mu,1,0) + (\mu,-1,0)=2\mu(1,0,0)\]
corresponds to the relation $xy=s^{2 \mu}$.
The monomials $x,y, s^{\mu}, s^{\mu-1} \gamma, \ldots, s \gamma^{\mu-1}, \gamma^\mu$ exactly correspond to the integral points 
of the intersection of $\mathcal{cC}$ with the integral plane $P$
in $\Z^3$
passing through the three points $(\mu,1,0)$, $(\mu,-1,0)$, $(0,0,1)$.
Therefore, by standard toric geometry, blowing-up the ideal 
$(x,y, s^{\mu}, s^{\mu-1} \gamma, \ldots, s \gamma^{\mu-1}, \gamma^\mu)$ corresponds to chopping off $\mathcal{C}$
along $\mathcal{C} \cap P$, and the cone of the exceptional divisor is given by the triangle $\mathcal{C} \cap P$ inside $P$, see Figure \ref{figure_blow_up}. 
The map $(\mu,1,0) \mapsto (0,1)$, 
$(\mu,-1,0) \mapsto (0,-1)$, $(0,0,\mu) \mapsto (\mu,0)$
induces an affine integral isomorphism between 
$\mathcal{C} \cap P$ in $P$ and the triangle with vertices 
$(0,1), (0,-1), (0,\mu)$ in $\Z^2$.

\begin{figure} \label{figure_blow_up}
\center{\input{J.pspdftex}}
\caption{Toric polytope description of the blow-up}
\label{Fig: Fig1}
\end{figure}

It follows that the fan of $S$ consists of the three rays generated by $(-1,0)$, $(1,\mu)$, and $(1,-\mu)$. 
Let $D_1$ be the toric divisor of $S$ dual to the ray generated by $(1,\mu)$ and 
let $D_2$ be the toric divisor of $S$ dual to the ray generated by $(1,-\mu)$.
If we continue to denote by $X_1$ and $X_2$ the strict transforms of $X_1$
and $X_2$ in $\tilde{\mathcal{X}}$, then $X_1 \cap S=D_1$ and $X_2 \cap S=D_2$.

As $\pi$ is not a toric blowup, $S$ does not come naturally with a toric structure: 
there are natural divisors $D_1$ and $D_2$ but no canonical choice of divisor $D_3$
dual to the ray generated by $(-1,0)$. 
The blown-up geometry 
$\tilde{\mathcal{X}}$ comes with a natural log smooth log structure over $\A^1$.
The corresponding central fiber $\tilde{X}_0$ is log smooth over the standard log 
point. In restriction to $S$, the morphism to the standard log point is strict away from 
$D_1 \cup D_2$.

The stable log map $\varphi \colon C \rightarrow X_0$ lifts to a stable log map 
$\tilde{\varphi} \colon \tilde{C} \rightarrow \tilde{X}_0$, 
where $\tilde{C}$ is obtained from $C$ by replacing the node $p$ by a new $\PP^1$-component $C_S$, meeting the rest of $\tilde{C}$ in two nodes $p_1$, $p_2$, and 
$\tilde{\varphi}(C_S) \subset S$,
$\tilde{\varphi}(p_1) \in D_1$, 
$\tilde{\varphi}(p_2) \in D_2$. 
By construction of the blow-up,
the curve $\tilde{\varphi}(C_S)$ has contact orders $1$ with $D_1$ and $D_2$
at the points $\tilde{\varphi}(p_1)$ and $\tilde{\varphi}(p_2)$,
and so the smoothing of the two nodes $p_1$ and 
$p_2$ of $\tilde{C}$ does not create new nodes in the image curve.
Therefore, the nodes of the image curve coming from the smoothing of the node $p$
of the curve $C$ in $X_0$ can be identified with the nodes of the image curve 
$\tilde{\varphi}(C_S)$ in $S \subset \tilde{X}_0$.

It follows that the study of the real nature of the nodes of the image curve 
coming from the smoothing of the node $p$ of $C$ is reduced to the study of the real nature of the nodes of the image of $\tilde{\varphi}(C_S)$.
In order to do that, we 
complete $(S,D_1 \cup D_2)$ into a general toric structure 
$(S,D_1 \cup D_2 \cup D_3)$ and we pick a toric degeneration of $S$
defined by the polyhedral decomposition given by a tropical curve $h_S \colon
\Gamma_S \rightarrow M_\RR$
with two vertices $V_1$, $V_2$, one bounded edge of weight $\mu$ connecting $V_1$, 
$V_2$, two unbounded edges of directions $(-1,0)$ and $(1,\mu)$ with weight $1$
attached to $V_1$, and two unbounded edges of directions 
$(-1,0)$ and $(1,-\mu)$ with weight $1$ attached to $V_2$. In such degeneration,
the curve $\tilde{\varphi}|_{C_S} \colon C_S \rightarrow S$ degenerates into 
a curve with tropicalization $h_S \colon
\Gamma_S \rightarrow M_\RR$. Therefore, we have reduced the study of the nodes 
of the image curve produced by the smoothing of a node corresponding to an edge of weight $\mu$ in a 
general tropical curve to the study of the nodes of the image curve produced by the smoothing 
of a node corresponding to the edge of weight $\mu$ in the tropical curve 
$h_S \colon
\Gamma_S \rightarrow M_\RR$, and this ends the proof of 
Lemma \ref{lem:blow_up}.
\end{proof}

We remark that the proof of Lemma \ref{lem:blow_up}
is a version of the shift operation 
introduced by Shustin in \cite[section 3.5]{Sh0}.
The reformulation of the shift operation in terms of
blow-up is also discussed by Shustin and Tyomkin in
\cite{ShT1, ShT2}.

\begin{theorem} \label{thm_signs_edges}
Let $\varphi_0 \colon C_0 \to X_0$ be a real maximally degenerate stable log map with associated tropical curve 
$(h\colon \Gamma \to M_{\RR}\cong \RR^2) \in \T_{g,\ell,\Delta}(\mathbf A)$.
Let $P \in C_0$ be a node corresponding to an edge $E \in \Gamma^{[1]}$
of weight $\mu \coloneqq w(E)$, and let $e$ be the integral affine length of $h(E)$.
Let $\zeta$ be the real root of unity specifying the real log structure of $C_0$ at $P$.
Then, the real types of the $\mu-1$ nodes in the image 
$\varphi(C)$ of the deformation $\varphi:C\to X$ of $\varphi_0$
over $t \in \A^1(\R) \setminus \{0\} \simeq \R^{\times}$ and which have limit $\varphi_0(P)$ at $t=0$, are as follows:
\begin{itemize}
    \item If $\mu$ is odd (and so necessarily $\zeta=1$), then all the $\mu-1$ nodes of $\varphi (C)$ are elliptic.
    \item If $\mu$ is even and $\zeta$ and $t^{\frac{e}{\mu}}$ have the same sign, then all the $\mu-1$ created nodes of 
    $\varphi (C)$ are elliptic.
    \item If $\mu$ is even and $\zeta$ and $t^{\frac{e}{\mu}}$ have opposite signs, 
   then one of the nodes of 
   $\varphi (C)$ is  hyperbolic and the $\mu-2$ others form $\frac{\mu-2}{2}$ pairs of complex conjugated nodes.
\end{itemize}
\end{theorem}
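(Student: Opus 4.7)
The plan is to reduce, via Lemma~\ref{lem:blow_up}, to a standard local model and then compute the self-intersections of the image curve explicitly. After the reduction, near $\varphi_0(P)$ the total space has \'etale-local equation $\xi\xi' = t^e$ with $\gamma$ a coordinate along the double divisor, and the domain curve $C$ has local coordinates $z, w$ at $P$ satisfying $zw = \zeta\, t^{e/\mu}$, coming from the chart $\beta_\zeta$ of Remark~\ref{Rem:nonstandard charts}. The map $\varphi$ sends $\xi = z^\mu$ and $\xi' = w^\mu$, so consistency $\xi\xi' = t^e$ forces $\zeta^\mu = 1$; in particular $\zeta = 1$ whenever $\mu$ is odd.

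I parametrize the smoothed curve by $z$ (with $w = \zeta\, t^{e/\mu}/z$) and expand the $\gamma$-coordinate as a Laurent series whose leading terms come from the local parametrizations of the two real lines $L_1 \subset X_{V_1}$, $L_2 \subset X_{V_2}$ meeting the double divisor at the common real point $\gamma = \gamma_0$,
\[
\gamma(z) = \gamma_0 + c_1 z + c_2\, \zeta\, t^{e/\mu}\, z^{-1} + (\text{higher order in } t),
\]
with real coefficients $c_1, c_2$. Two distinct points $z_1 \neq z_2$ map to the same image point exactly when $z_2 = \xi_0 z_1$ for a nontrivial $\mu$-th root of unity $\xi_0$ and $\gamma(z_1) = \gamma(z_2)$; at leading order the second condition reduces to $z_1^2 = (c_2/c_1)\, \zeta\, t^{e/\mu}\, \xi_0^{-1}$. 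Identifying $\{\xi_0, \xi_0^{-1}\}$ (which correspond to swapping $z_1, z_2$) yields exactly $\mu - 1$ nodes, matching the combinatorial count of $\mu - 1$ interior lattice points on the edge $E$.

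Next I classify each node via the real involution $z \mapsto \overline{z}$ on the smoothed curve. The node $\{z_1, \xi_0 z_1\}$ is fixed by conjugation when either both branches are real (forcing $\xi_0 \in \{\pm 1\}$, giving a hyperbolic node) or conjugation swaps them, $\overline{z_1} = \xi_0 z_1$ (giving an elliptic node); otherwise the node and its conjugate form an imaginary pair. For $\xi_0 = -1$, which appears only when $\mu$ is even, the quadratic $z_1^2 = -(c_2/c_1)\,\zeta\,t^{e/\mu}$ has real roots precisely when $(c_2/c_1)\,\zeta\, t^{e/\mu} < 0$ (hyperbolic node) and purely imaginary roots otherwise (elliptic node). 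For non-real $\xi_0$, the condition $\overline{z_1} = \xi_0 z_1$ is solvable exactly when $(c_2/c_1)\,\zeta\,t^{e/\mu} > 0$, producing two real elliptic nodes for the pair $\{\xi_0, \xi_0^{-1}\}$; otherwise the two corresponding nodes form an imaginary conjugate pair.

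The main obstacle is the sign of $c_2/c_1$. Using the mirror-symmetric configuration at $V_1, V_2$ produced by Lemma~\ref{lem:blow_up}, together with the explicit rational parametrizations of real lines in toric surfaces (as in the proof of Lemma~\ref{Lem: torsor}), one checks that $c_2/c_1 > 0$ in the reduced setup. The classification above then assembles into precisely the three cases of the theorem.
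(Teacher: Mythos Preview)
Your approach is essentially the one the paper takes: reduce via Lemma~\ref{lem:blow_up} to the standard local model, write down an explicit parametrization of the smoothed curve, solve $z_2=\xi_0 z_1$ and $\gamma(z_1)=\gamma(z_2)$ to locate the $\mu-1$ nodes, and then read off the real type from whether the preimages are real, complex-conjugate, or neither.

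The difference is in how the parametrization is set up, and this is exactly where your proof has a soft spot. You introduce two independent real constants $c_1,c_2$ and then assert, without computation, that $c_2/c_1>0$ ``in the reduced setup''. This sign is the entire content of the theorem: if it went the other way the elliptic and hyperbolic/imaginary cases would swap. The paper avoids this issue by writing the log morphism explicitly after the reduction of Lemma~\ref{lem:blow_up}: one has $x\mapsto c z^{\mu}$, $y\mapsto c^{-1}w^{\mu}$, and crucially $\gamma\mapsto d(z+w-1)$ with a \emph{single} real constant $d$ multiplying both $z$ and $w$. Thus the analogue of your $c_2/c_1$ is identically $1$, the parametrization
\[
\gamma(z)=d\!\left(z+\zeta\,\frac{t^{e/\mu}}{z}-1\right)
\]
is exact rather than ``leading order in $t$'', and the quadratic for the nodes is $z_1^2=\zeta\,t^{e/\mu}/\xi$ with no extraneous sign. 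The case analysis then proceeds exactly as you outline: when $\zeta t^{e/\mu}>0$ one has $|z_1|^2=t^{e/\mu}$, so $t^{e/\mu}/z_1=\overline{z_1}$ and $\gamma(z_1)\in\R$, giving elliptic nodes; when $\zeta t^{e/\mu}<0$ one gets $\gamma(z_1)=d(2i\,\Ima z_1-1)$, forcing imaginary nodes except at $\xi=-1$ (only present for $\mu$ even), which gives a hyperbolic node.

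So your outline is correct, but to close it you must actually derive the explicit form of the log morphism on $\gamma$ after the blow-up and see that the two linear coefficients agree; this is what replaces your unproved claim $c_2/c_1>0$. Once that is in hand, the ``higher order in $t$'' terms are not needed either.
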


\begin{proof}
By Lemma \ref{lem:blow_up}, we can assume that the tropical curve $h \colon \Gamma
\rightarrow \R^2$ is given locally near the edge $E$ by Figure \ref{Fig: tropical node}.
Denote by $V_1,V_2\in \Gamma^{[0]}$ the vertices adjacent to $E$, and let 
$C_1$ and $C_2$ be the corresponding components of $C_0$, meeting at the node $P$. Let $X_1,X_2\subset X_0$ be the irreducible components of $X_0$ corresponding to 
$h(V_1)$ and $h(V_2)$. 
Denote by $X_E$ the double locus in $X_0$, where $X_1$ and $X_2$ intersect.

A chart for the log structure $\mathcal{M}_{X_0}$, 
on an open neighbourhood $U$ around the point $\varphi_0(P)$ is given by
\begin{eqnarray}
\label{Eq: chart around varphi(P)}
 \overline{\M}_{X_0}|_U & \lra &  \M_{X_0}|_U\\
\nonumber
\overline{x} & \longmapsto & s_x\\
\nonumber
\overline{y} & \longmapsto & s_y\\
\nonumber
\overline{\gamma} & \longmapsto & s_{\gamma}\\
\nonumber
\overline{t} & \longmapsto & s_t\\
\nonumber
\end{eqnarray}
where $\overline{x} \cdot \overline{y} = \overline{t}^e$. 
The coordinates $x,y,\gamma$ on $U$ are such that 
$\gamma$ is a toric coordinate along $X_E$ (unlike the 
$\gamma$ coordinate used in the proof of Lemma \ref{lem:blow_up}), $X_1$ is locally defined by $x=0$ and $X_2$ is locally defined by $y=0$, 

Now, before analysing the log structure on the domain curve around $C$, recall by Definition \ref{Def: prelog}, 
the intersection index of either $C_1$ or $C_2$ with $X_E$ is equal to $\mu$. 
From the proof of Theorem \ref{Thm: lifting md curves to log maps} it follows that there exists a chart for the log structure $\mathcal{M}_C$ on a neighbourhood $V$ around $P$
given by
\begin{eqnarray}
\label{Eq: chart around P}
 \overline{\M}_C|_V & \lra &  \M_C|_V\\
\nonumber
\overline{z} & \longmapsto & \zeta^{-1}s_z\\
\nonumber
\overline{w} & \longmapsto & s_w\\
\nonumber
\overline{t} & \longmapsto & s_t^{e/\mu}\\
\nonumber
\end{eqnarray}
where $\overline{z} \cdot \overline{w} = \overline{t}^{e/\mu}$. 
Moreover, $\zeta=1$ if $\mu$ is odd, and $\zeta \in \{ 1,-1 \}$ if $\mu$ is even. 
The coordinates on $V$ are given by $\{ z, w, t | z \cdot w = t^{e/ \mu} \}$. 
Now, in the remaining part of this section we assume that the charts for the log structures around $P$ 
and $\varphi_0(P)$ are given as in \eqref{Eq: chart around P} and \eqref{Eq: chart around varphi(P)} respectively.

Label the points $\{ 0_{C_i},1_{C_i},\infty_{C_i} \}$ on each irreducible component $C_i$ mapping to the toric boundary of $X_i$, 
and assume that $P= 0_{C_1}=0_{C_2}$. Let
\begin{eqnarray}
\nonumber
 \varphi_0(1_{C_1}) & = & p_1 \coloneqq (x=c,y=0,\gamma=0) \in X_1 \\
\nonumber
 \varphi_0(1_{C_2}) & = & p_2 \coloneqq (x=0,y=c',\gamma=0) \in X_2 \\
\varphi_0(P) & = & p_0 \coloneqq (x=0,y=0,\gamma=d) \in X_E \\
\nonumber
\end{eqnarray}
for some $c,c',d \in \RR^{\times}$. 
The data of the tropical curve $h \colon \Gamma \to \RR^2$ given by 
Figure \ref{Fig: tropical node} 
defines the morphisms $\overline{\M}_{X_0,\varphi_0(x)} \to \overline{\M}_{C_0,x}$ on ghost sheaf level for any closed point $x\in C_0$ \cite[\S8]{A_corals_2}.
Moreover, by the definition of the log structure on $X_0$, the sections of $\M_{X_0,\varphi_0(x)}$ are obtained by restrictions of regular functions in \[\M_{\mathcal{X},\varphi_0(P)} \subset \mathcal{O}_{\mathcal{X},\varphi_0(P)},\]
where we recall that $\mathcal{X}$
denotes the total space of degeneration. 
Thus, determining lifts to log morphisms amounts to specifying the values $c,c',d \in \RR^{\times}$, which define the morphism
\begin{eqnarray}
 \label{Eq: the log morphism}
 \mathcal{M}_{\mathcal{X},\varphi_0(P)} & \lra & \mathcal{M}_{C_0,P} \\
\nonumber
x & \longmapsto & c z^{\mu}  \\
\nonumber
y & \longmapsto & c' w^{\mu}  \\
\nonumber
\gamma & \longmapsto & d (z + w-1)  \\
\nonumber
t & \longmapsto &  t  \\
\nonumber
\end{eqnarray}
Since on a neighbourhood of $P$, the coordinates satisfy the relation
\[ \{ z,w,t~|~z\cdot w=t^{e/\mu}  \}  \]
and on a neighbourhood of $\varphi_0(P)$ on the total space we have the relation
\[ \{ x,y,\gamma,t ~|~x\cdot y=t^e \}  \]
we obtain $c'=c^{-1}$. Then, we indeed get a well-defined map in \eqref{Eq: the log morphism} because
\begin{equation}
    \label{Eq: relations}
    xy = cz^\mu c^{-1}w^\mu
=(zw)^\mu=(t^{e/\mu})^\mu=t^e.
\end{equation}

For fixed $t\neq 0$, from \eqref{Eq: relations} we get
\[ y=\frac{t^e}{x} \,\ \,\ \mathrm{and} \,\ \,\  w=\zeta \frac{t^{e/\mu}}{z}, \]
and obtain a parametrization of the deformation of $\varphi_0(C_0)$ in a neighbourhood of $P$, given by
\begin{eqnarray}
 \nonumber
\C^{\times} & \longrightarrow & (\C^{\times})^2 \\
\nonumber
z & \longmapsto & (x(z),\gamma(z))
\end{eqnarray}
with
\begin{eqnarray}
 \label{Eq: parametrization}
x(z) & = & cz^{\mu} \\
\nonumber
\gamma(z) & = & d \left(z+\zeta \frac{t^{e/\mu}}{z}-1\right) \,.
\end{eqnarray}
The nodal points in the image of the deformation are generated as the images of points $z_1,z_2 \in \C^{\times}$, with $z_1 \neq z_2$, and
\[(x (z_1),\gamma (z_1))
=(x (z_2),\gamma (z_2))\,.\] 
From the parametrization \eqref{Eq: parametrization}, these points are obtained as solutions of the following equations:
\begin{eqnarray}
 \label{Eq:first eqn}
 z_1^{\mu} & = & z_2^{\mu} \\
 \label{Eq: second eqn}
 z_1+\zeta \frac{t^{e/\mu}}{z_1}
& = & z_2+\zeta \frac{t^{e/\mu}}{z_2} 
\end{eqnarray}
From \eqref{Eq:first eqn}, since $z_1 \neq z_2$, we obtain $z_2=\xi z_1$ for some 
$\xi \in \C$ such that 
$\xi^{\mu}=1$, $\xi \neq 1$.
Then \eqref{Eq: second eqn} gives
\begin{eqnarray}
 \nonumber
 z_1+\zeta \frac{t^{e/\mu}}{z_1} & = & \xi z_1+\zeta \frac{t^{e/\mu}}{\xi z_1} \\
 \nonumber
 z_1^2+\zeta t^{e/\mu} & = & \xi z_1^2+\frac{\zeta}{\xi} t^{e/\mu} \\
 \nonumber
 z_1^2(1-\xi) & = & \zeta t^{e/\mu}(\frac{1}{\xi}-1) \\
 \nonumber
 z_1^2 & = & \zeta \frac{t^{e/\mu}}{\xi}
\end{eqnarray}
It follows that $z_2^2=(\xi z_1)^2=\xi \zeta t^{e/\mu}$.

We have $(\mu-1)$ choices for $\xi$ and two choices of square root of $\zeta \frac{t^{e/\mu}}{\xi}$.
Because of the symmetry between 
$z_1$ and $z_2$, we finally get 
$\mu-1$ nodes in the image of the curve.

We now study the real nature of these nodes. We will use the following facts.
Assume that $z_1$ and $z_2$ are such that $(x (z_1),\gamma (z_1))
=(x (z_2),\gamma (z_2))$ and so define a node in the image curve.
 If $z_1,z_2 \in \R$, then we get an hyperbolic node.
If $z_1 \notin \R$, 
$z_2 \notin \R$, and $(x(z_1),\gamma(z_1))\in \R^2$, then we get an elliptic node.
If $z_1 \notin \R$, 
$z_2 \notin \R$, and $(x(z_1),\gamma(z_1))\notin \R^2$, then we get an imaginary node.

As $\zeta$ and $t^{\frac{e}{\mu}}$ only enter the above equation through the combination $\zeta t^{\frac{e}{\mu}}$, 
flipping the sign of $\zeta$ is equivalent to flipping the sign of $t^{\frac{e}{\mu}}$, and so we can assume without loss of 
generality that $t^{\frac{e}{\mu}} \in \R_{>0}$.

Assume $\zeta=1$. We then have $z_1^2=\frac{t^{\frac{e}{\mu}}}{\xi}$
and so
$|z_1|^2=t^{\frac{e}{\mu}}$. It follows that 
\[ \frac{t^{\frac{e}{\mu}}}{z_1}=\frac{|z_1|^2}{z_1}= \overline{z}_1\]
and so 
\[\gamma(z_1)=d(z_1+\overline{z}_1
-1)=d(2 \Rea (z_1) -1) \in \R\,.\]
On the other hand, we have 
$z_1^{2\mu}=t^e$, so $z_1^{\mu}=\pm t^{\frac{e}{2}}$
and $\alpha(z_1)=\pm c t^{\frac{e}{2}} \in \R$.
The upshot is that the node is elliptic.

Assume $\zeta=-1$.
We then have $z_1^2=-\frac{t^{\frac{e}{\mu}}}{\xi}$ and so $|z_1|^2=t^{\frac{e}{\mu}}$. It follows that 
\[ \frac{t^{\frac{e}{\mu}}}{z_1}=\frac{|z_1|^2}{z_1}=\overline{z}_1 \]
and so 
\[ \gamma (z_1)=d(z_1-\overline{z}_1-1) =d(2i \Ima z_1 -1)\,.\]
If $\xi \notin \R$, which is always the case if $\mu$ is odd, then 
$z_1 \notin  \R$, so $\gamma(z_1)
\notin \R$ and therefore the node if 
imaginary.
If $\xi \in \R$, which is only the case if $\mu$ is even and then 
$\xi=-1$, we have $z_1^2=t^{\frac{e}{\mu}}$
and $z_2^2=t^{\frac{e}{\mu}}$, so $z_1,z_2 \in \R$, and 
therefore the node is hyperbolic.

We can now summarize the results. If $\mu$ is odd, then $\zeta=1$ and all the 
$(\mu -1)$ nodes are elliptic. If $\mu$ is even and $\zeta=1$, then all the 
$(\mu-1)$ nodes are elliptic. If $\mu$ is even and $\zeta=-1$, then the node corresponding to $\xi=-1$ is hyperbolic, and the $\mu-2$ other nodes 
corresponding to $\xi \neq -1$ are imaginary nodes.
\end{proof}

\subsection{Nodes which are preserved under the deformation.} 
\label{Sec: nodes preserved}
In this section we analyse the types of nodes that appear in the  image of a maximally degenerate real stable log map 
$\varphi_0 \colon C_0\to X_0$ and which are contained in the smooth locus of $X_0$. 
They will deform into locally isomorphic nodes in the deformation $\varphi_t \colon C_t \rightarrow X_t$.

Recall that nodal points on the image of $\varphi_0$ are generated, 
if two points of $C_0$, say $p_1,p_2$ map to the point under the map $\varphi_0: C_0\to X_0$. There are two possible cases:
\begin{itemize}
\label{items: possible nodes}
    \item[(i)] The points $p_1,p_2$ are points  on different irreducible components of $C_0$.
    \item[(ii)] The points $p_1,p_2$ belong to the same irreducible component of $C_0$.
\end{itemize}
We focus on nodes of $\varphi_0(C_0)$ contained in the smooth locus of $X_0$, and we will analyse the types of the nodes in each case separately. 
The following lemma shows that in the first case we only obtain hyperbolic nodes.

\begin{lemma} \label{lem:4-valent}
Let $\varphi_0 \colon C_0 \to X_0$ be a real maximally degenerate stable log map with associated tropical curve 
$(h\colon \Gamma \to M_{\RR}\cong \RR^2) \in \T_{g,\ell,\Delta}(\mathbf A)$.
Every node in the image of 
$\varphi_0(C_0)$ that is contained in the smooth locus of $X_0$ and that is obtained from 
identification by $\varphi_0$ of two points lying on different irreducible 
components of $C_0$ is hyperbolic.
\end{lemma}

\begin{proof}
Let $p_1,p_2\in C_0$ be two distinct points mapping by $\varphi_0$
to the same point $\varphi_0(p_1)=\varphi_0(p_2)$ contained in the smooth locus of 
$X_0$. 
Assume that $p_1$ and $p_2$ lie on different irreducible components
$C_{0,1}$ and $C_{0,2}$ of $C_0$.
As $\varphi_0$ is a real map, this can only happen if either $p_1$ and $p_2$ are complex conjugated, in which case $p$ is elliptic, or if 
$p_1$ and $p_2$ are real, in which case $p$ is hyperbolic. We will show that the latter case is realized.

We denote the vertices of 
$\Gamma$ corresponding to $C_{0,1}$ and $C_{0,2}$ by $V_1$ and $V_2$ respectively, 
and let $X_p$ be the irreducible component of $X_0$ corresponding to the vertex $h(V_1)=h(V_2)$ in the image of $\varphi_0$. 
Since $h$ is a general tropical curve, two vertices $V_1,V_2\in \Gamma$ can map to the same point transversally only if they are bivalent vertices, 
and create a $4$-valent vertex, so that $X_p \cong \PP^1 \times \PP^1$. In particular, the
homology classes of $\varphi_0(C_{0,1})$ and $\varphi_0(C_{0,2})$
in $X_p$ are distinct. As the real involution of $X_p$
acts trivially on the homology of $X_p$, we deduce that 
the components $C_{0,1}$ and $C_{0,2}$ are not complex-conjugated but are each preserved by the real involution of $C_0$.
Therefore, the points $p_1$ and $p_2$
are not complex-conjugated and are both real. It follows that 
$p$ is an hyperbolic node of $\varphi_0(C_0)$.
\end{proof}

Next we will analyse the second case, assuming two points $p_1,p_2$ belonging to the same component of $C_0$ map to the same point under $\varphi_0 \colon C_0 \to X_0$. 
Let $V=\varphi_0(p_1)=\varphi_0(p_2)$ denote the vertex in the image of the tropicalization $h \colon \Gamma \to \RR^2$, 
and let $X_V$ denote the irreducible component of $X_V$ corresponding to $V$. 
We will denote the restriction of $\varphi_0$ to the $\PP^1$-component of $C_0$ containing $p_1,p_2$ as
\begin{equation}
    \label{Eq: varphi restricted to P1}
   \varphi \colon \PP^1  \longrightarrow X_V \,.
\end{equation}

\begin{proposition}
\label{Prop: all interior points give elliptic nodes}
Let $V$ be a trivalent vertex of $\Gamma$. 
Then all real nodes in the image of the map $\varphi \colon \PP^1  \to X_V$ in \eqref{Eq: varphi restricted to P1} are elliptic.
\end{proposition}

\begin{proof}
Let $(\mathbf{u},
\mathbf{w})=((u_i)_{1\leq i\leq 3},(w_i){1\leq i\leq 3})$ be the tuple of direction vectors and weights associated to the edges $E_i$ adjacent to $V$, 
for $i\in \{1,2,3 \}$. Denote by $D_i$ the toric divisor of $X_V$ corresponding to $E_i$. As in the proof of Lemma \ref{Lem: torsor}, the real
line $\varphi \colon \PP^1  \rightarrow X_V$ in $X_V$ of type $(\mathbf{u},
\mathbf{w})$ is defined by the equation
\begin{equation} \label{eq: varphi}
\varphi^{\times}(z^n)=\chi_{\varphi}(n) \prod_{i=1}^3 (y-y(q_i))^{(w_i u_i, n)}
\end{equation}
for every $n \in N$, where $\chi_{\varphi}$ is a real character of $N$, 
and denotes $y$ the unique real coordinate on $\PP^1$. 
By $y(q_i)$ we denote the coordinate of the point $q_i \in \PP^1$ whose image under $\varphi$ intersects the toric boundary divisor $D_i$, 
and we assume $y(q_1)=-1, y(q_2)=0, y(q_3)=1$.

Now let us assume that a real node of $\varphi(\PP^1)$ is hyperbolic. Then, we can find $y$ and $y'$ real with 
$y \neq y'$ and $\varphi(y)=\varphi(y')$. Using \eqref{eq: varphi}, the condition $\varphi(y)=\varphi(y')$ can be rewritten as 
\begin{equation} \label{eq: node_condition}
\prod_{i=1}^3 (y-y(q_i))^{(w_i u_i, n)}
=\prod_{i=1}^3 (y'-y(q_i))^{(w_i u_i, n)}
\end{equation}
for every $n \in N$. Applying
\eqref{eq: node_condition} to 
$n=\det(-,w_1 u_1)$, we obtain 
\begin{equation} \label{eq: node_condition_1}
\left( \frac{y+1}{y'+1} \right)^{\mu} = \left( \frac{y}{y'} \right)^{\mu} \,,
\end{equation}
where, recalling from Definition \ref{tropcurve}
the balancing condition \eqref{Eq:balancing_condition}
$w_1u_1+w_2u_2+w_3u_3=0$, we have  
\[ \mu \coloneqq w_1 w_2 \det(u_1, u_2)
=w_3 w_1 \det(u_3,u_1)=w_2 w_3 \det(u_2,u_3)\,, \]
which
equals the multiplicity of the vertex $V$
possibly up to a sign. 
As both $\frac{y+1}{y'+1}$ and $\frac{y}{y'}$ are real, it follows from 
\eqref{eq: node_condition_1} that either
\begin{equation} \label{eq: node_condition_2}
\frac{y+1}{y'+1}=\frac{y}{y'}
\end{equation}
or 
\begin{equation} \label{eq: node_condition_3}
\frac{y+1}{y'+1}=-\frac{y}{y'}\,.
\end{equation}
But \eqref{eq: node_condition_2} implies $y=y'$, in contradiction with the assumption that $y \neq y'$. Therefore, 
\eqref{eq: node_condition_3} holds and so we have 
\begin{equation} \label{eq: node_condition_4}
    2yy'=-y-y' \,.
\end{equation}

Applying \eqref{eq: node_condition} to 
$n=\det(-,w_3 u_3)$, we obtain 
\begin{equation} \label{eq: node_condition_5}
    \left(\frac{y-1}{y'-1}\right)^{\mu}=\left( \frac{y}{y'} \right)^{\mu} \,.
\end{equation}
As both 
$\frac{y-1}{y'-1}$ and $\frac{y}{y'}$ are real, it follows from 
\eqref{eq: node_condition_5} that either
\begin{equation} \label{eq: node_condition_6}
\frac{y-1}{y'-1}=\frac{y}{y'}
\end{equation}
or 
\begin{equation} \label{eq: node_condition_7}
\frac{y-1}{y'-1}=-\frac{y}{y'}\,.
\end{equation}
But \eqref{eq: node_condition_6} implies $y=y'$, in contradiction with the assumption that $y \neq y'$. Therefore, 
\eqref{eq: node_condition_7} holds and so we have
\begin{equation} \label{eq: node_condition_8}
    2yy'=y+y' \,.
\end{equation}
Comparing \eqref{eq: node_condition_4} and \eqref{eq: node_condition_8}, we obtain
first $yy'=0$ and then $y=y'=0$, which is a contradiction with the assumption 
$y \neq y'$. 

Therefore, we conclude that $\varphi(\PP^1)$ does not admit hyperbolic real nodes and so all the real nodes of $\varphi(\PP^1)$ are elliptic.
\end{proof}

\subsection{Correspondence theorem with Welschinger signs}

\label{Sec: corr thm W signs}

We specialise the setup of \S\ref{sec:The Main Theorem} with $\mathrm{rank}M=2$ and zero-dimensional constraints $\mathbf{A}=(A_1,\ldots,A_\ell)$. 
For $t \in \A^1(\R) \setminus \{0\}
\simeq \R^{\times}$ general, we have a finite set 
$M_{(g,\Delta,\mathbf{A},\mathbf{P}),t}^{\R-log}$
of real stable log maps to $X_t$ matching the constraints $\mathbf{Z}_{\mathbf{A},\mathbf{P},t}$.
By Proposition \ref{prop_only_nodes}, for every 
$(\varphi \colon C \rightarrow X)$
element of $M_{(g,\Delta,\mathbf{A},\mathbf{P}),t}^{\R-log}$, the real curve $\varphi(C)$ is nodal, and so it makes sense to define 
$\mathcal{W}^{log}(\varphi)$
following Definition 
\ref{Def: log Welschinger sign}.
We define \emph{log Welschinger numbers} 
\[ \mathcal{W}^{\RR-log}_{(g,\Delta,\mathbf{A},\mathbf{P}),t} \coloneqq \sum_{ \varphi \in M_{(g,\Delta, \mathbf{A},\mathbf{P}),t}^{\RR-log}}\mathcal{W}^{log}(\varphi)\,.\]

On the other hand, we have
finite set $\T_{g,\ell,\Delta}(\mathbf{A})$ of tropical curves matching the tropical constraints $\mathbf{A}$.
For every 
$(\Gamma,\mathbf{E},h)$ element of 
$\T_{g,\ell,\Delta}^{\R-log}$, we can define 
$\mathrm{Mult}_{\R}(h)$ following Definition
\ref{Def:tropical W mult}.
We define \emph{tropical Welschinger numbers}
\[\mathcal{W}^{\RR-trop}_{(g,\Delta,\mathbf{A},\mathbf{P})} \coloneqq \sum_{(\Gamma,\mathbf{E},h) \in 
\mathcal{T}_{g,\ell,\Delta}(\mathbf{A})} \mathrm{Mult}_\RR(h)\,. \]

\begin{theorem}
\label{Thm: log Welschinger equals tropical}
For every $t \in \A^1(\R) \setminus \{0\}
\simeq \R^{\times}$ sufficiently close to $0$, we have
\begin{equation}
\mathcal{W}^{\RR-log}_{(g,\Delta,\mathbf{A},\mathbf{P}),t}
=
\mathcal{W}^{\RR-trop}_{(g,\Delta,\mathbf{A},\mathbf{P})}\,.
\end{equation}
\end{theorem}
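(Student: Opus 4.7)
The plan is to apply the correspondence of Theorem~\ref{main theorem} to parametrize $M^{\R-\log}_{(g,\Delta,\mathbf{A},\mathbf{P}),t}$ by combinatorial data attached to tropical curves $h \in \T_{g,\ell,\Delta}(\mathbf A)$, and then compute $\mathcal{W}^{\log}(\varphi_t)$ node by node in terms of this data, so that summing over all log maps lying over a fixed $h$ recovers $\mathrm{Mult}_{\R}(h)$.

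By Proposition~\ref{prop_only_nodes}, $\varphi_t(C_t)$ is nodal for $t$ close to $0$, and its nodes split into Type (A), those deforming from nodes of $\varphi_0(C_0)$ in the smooth locus of $X_0$, and Type (B), those created by the smoothing of a node of $\varphi_0(C_0)$ on the double locus. At a trivalent vertex $V$ of $\Gamma$, $\varphi_0$ restricts to a real line $\PP^1\to X_V$ whose image is a real rational curve with arithmetic genus $I_{\Delta_V}$, hence with exactly $I_{\Delta_V}$ nodes. By Proposition~\ref{Prop: all interior points give elliptic nodes}, all real nodes among them are elliptic; writing $r_V+2c_V=I_{\Delta_V}$ with $c_V$ the number of pairs of complex conjugate nodes, we obtain $(-1)^{r_V}=(-1)^{I_{\Delta_V}}$, independent of the particular real line chosen. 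Lemma~\ref{lem:4-valent} guarantees that any crossing-type Type (A) node is hyperbolic and so contributes $+1$. Hence the total Type (A) contribution to $\mathcal{W}^{\log}(\varphi_t)$ is $\prod_V(-1)^{I_{\Delta_V}}$, a function of $h$ alone.

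For Type (B) nodes at a bounded edge $E$ of weight $\mu$, Theorem~\ref{thm_signs_edges} classifies the real nature of the $\mu-1$ new nodes in terms of the real root of unity $\zeta$ encoding the log structure at $E$: if $\mu$ is odd then $\zeta=1$ is forced and the $\mu-1$ nodes are all elliptic, contributing sign $(-1)^{\mu-1}=+1$; if $\mu$ is even then $\zeta\in\{\pm 1\}$, with $\zeta$ and $t^{e/\mu}$ of the same sign producing $\mu-1$ elliptic nodes (sign $-1$) and of opposite sign producing a single hyperbolic node together with $(\mu-2)/2$ pairs of complex conjugate nodes (sign $+1$). The two $\zeta$-lifts associated with an even bounded edge therefore contribute opposite signs that cancel; this cancellation propagates through $\sum_{\varphi_t\text{ over }h}\mathcal{W}^{\log}(\varphi_t)$, so any $h$ carrying a bounded edge of even weight contributes $0$ in total, matching $\mathrm{Mult}_{\R}(h)=0$ from Definition~\ref{Def:tropical W mult}.

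When $h$ has only odd-weight bounded edges, there is no $\zeta$-choice, every real log map $\varphi_t$ over $h$ has the common Welschinger sign $\prod_V(-1)^{I_{\Delta_V}}=\mathrm{Mult}_{\R}(h)$, and summing over $h$ gives $\mathcal{W}^{\R-\log}_{(g,\Delta,\mathbf{A},\mathbf{P}),t}=\sum_h \mathrm{Mult}_{\R}(h)=\mathcal{W}^{\R-\mathrm{trop}}_{(g,\Delta,\mathbf{A},\mathbf{P})}$, provided that the combinatorial multiplicity $w^{\R}(\Gamma,\mathbf E)\cdot\mathcal{D}^{\R}_{\mathcal{T}_h,\sigma}\cdot\prod_j\mathcal{D}^{\R}_{\mathcal{A}_j}$ produced by the main theorem reduces to $1$ in the odd-weight case. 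This last reduction is the main obstacle: it will be established by using that $\Coker(\mathcal{T}_h)$ has odd order once all bounded edges of $\Gamma$ are of odd weight (forcing $\mathcal{D}^{\R}_{\mathcal{T}_h,\sigma}=1$) and that the marked-edge indices $\mathcal{D}^{\R}_{\mathcal{A}_j}$ together with the weights $w(E_j)$ trivialize under the generic choice of $(\mathbf{A},\mathbf{P})$; the non-toric blow-up of Lemma~\ref{lem:blow_up}, which allowed the node-by-node analysis in the first place, is what makes both this simplification and the $\zeta$-cancellation at even edges precise and independent of the global geometry.
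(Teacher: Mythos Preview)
Your node--by--node analysis (Type (A) via Proposition~\ref{Prop: all interior points give elliptic nodes} and Lemma~\ref{lem:4-valent}, Type (B) via Theorem~\ref{thm_signs_edges}, and the $\zeta\leftrightarrow-\zeta$ cancellation at even bounded edges) is exactly the argument the paper gives, and this part of your write--up is correct.

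The divergence, and the gap, is entirely in your last paragraph. The paper does \emph{not} route the argument through the multiplicity formula of Theorem~\ref{main theorem}; it works directly at the level of a single maximally degenerate real stable map $\underline{\varphi}_0$, observing that in the odd--weight case Theorem~\ref{Thm: lifting md curves to log maps} gives a unique real log lift and hence a unique $\varphi_t$ whose Welschinger sign is $\prod_V(-1)^{I_{\Delta_V}}$. Your attempt instead asserts that $w^{\R}(\Gamma,\mathbf E)\cdot\mathcal{D}^{\R}_{\mathcal{T}_h,\sigma}\cdot\prod_j\mathcal{D}^{\R}_{\mathcal{A}_j}=1$ in the odd--weight case, and for this you claim that $\Coker(\mathcal{T}_h)$ has odd order whenever all bounded edges have odd weight. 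This claim is not proved and is in general false: already in dimension two the lattice index of $\mathcal{T}_h$ is governed by the vertex multiplicities $\mathrm{Mult}(V)=w_1w_2|\det(u_1,u_2)|$, and odd edge weights by no means force $|\det(u_1,u_2)|$ to be odd (e.g.\ reduce the balancing condition modulo~$2$: if $u_1\equiv u_2\equiv u_3\pmod 2$ then $\det(u_1,u_2)$ is even). Even granting odd $|\Coker(\mathcal{T}_h)|$, you would still need to rule out $\mathcal{D}^{\R}_{\mathcal{T}_h,\sigma}=0$, which depends on the twist $\sigma$ and is a separate issue you do not address. Likewise, the phrase ``$w(E_j)$ trivialize under the generic choice of $(\mathbf A,\mathbf P)$'' is not an argument: these weights are fixed by the tropical type and are not moved by genericity of the point configuration.

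In short: keep the first three paragraphs as they are, drop the attempt to pass through Theorem~\ref{main theorem}, and follow the paper by arguing directly from a fixed $\underline{\varphi}_0$ and its log lifts.
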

\begin{proof}
Let $\underline{\varphi}_0 \colon \underline{C}_0 \rightarrow X_0$
be a maximally degenerate real stable map. Let 
$h \colon \Gamma \rightarrow M_{\RR}$
be the corresponding tropical curve. The proof is divided into two cases:
either $\Gamma$ contains one bounded edge of even weight, or all the bounded edges of $\Gamma$ have odd weight.

Let us assume that $\Gamma$ contains a bounded edge $E$ of even weight $w(E)$.
By Theorem \ref{Thm: lifting md curves to log maps}, in trying to lift 
$\underline{\varphi}_0 \colon \underline{C}_0 \rightarrow X_0$
to a real stable log map 
$\varphi_0 \colon C_0 \rightarrow X_0$, we have two choices for each bounded edge of 
$\Gamma$ of even weight, labeled by $\zeta=1$ or $\zeta=-1$.
Therefore, we can divide all real log lifts of $\underline{\varphi}_0$
into pairs, where two elements $\varphi_0^+$ and $\varphi_0^-$ of a pair only differ by the sign of 
$\zeta$ specifying the log structure at the node corresponding to the edge $E$.
By Theorem \ref{thm_signs_edges}, the nodes of the image curves 
of the deformations $\varphi_t^+$ and $\varphi_t^-$ 
of $\varphi_0^+$ and $\varphi_0^-$ are in a one-to-one correspondence preserving the 
real type, except for the nodes created by the smoothing of the node $p_E$ of $\underline{C}_0$ corresponding to the edge $E$. 
For either $\varphi_t^+$ and $\varphi_t^-$, the $w(E)-1$ nodes of the image curve created  by the smoothing of $p_E$ are all real elliptic, 
whereas for the other, the $w(E)-1$ nodes of the image curve created by the smoothing of $p_E$ split into one real hyperbolic node and $w(E)-2$ non-real nodes. 
It follows that the contributions of $\varphi_+$ and $\varphi_-$
to $\mathcal{W}^{\RR-log}_{(g,\Delta,\mathbf{A},\mathbf{P}),t}$ differ by 
$(-1)^{w(E)-1}=-1$ and so they cancel. 
Therefore, the total contribution to  $\mathcal{W}^{\RR-log}_{(g,\Delta,\mathbf{A},\mathbf{P}),t}$ 
of all the real log lifts of $\underline{\varphi_0}$ is zero. 
On the other hand, the contribution of the
tropical curve $h \colon \Gamma \rightarrow M_{\R}$ to $\mathcal{W}^{\RR-trop}_{(g,\Delta,\mathbf{A},\mathbf{P})}$ is also zero
by Definition \ref{Def:tropical W mult}.

Let us assume that all edges of $\Gamma$ have odd weight.
By Theorem \ref{Thm: lifting md curves to log maps}, there is a unique lift
of $\underline{\varphi}_0$ to a real stable log map 
$\varphi_0 \colon C_0 \rightarrow X_0$. 
Let $\varphi_t \colon C_t \rightarrow X_t$ be the deformation of $\varphi_0$.
By Definition \ref{Def: log Welschinger sign}, the contribution of 
$\varphi_t$ to $\mathcal{W}^{\RR-log}_{(g,\Delta,\mathbf{A},\mathbf{P}),t}$
is $\mathcal{W}(\varphi_t)=(-1)^{m(\varphi_t)}$, where $m(\varphi_t)$
is the number of elliptic nodes of $\varphi_t(C_t)$. 
Nodes of $\varphi_t(C_t)$ fall into three categories.

First, there are nodes of $\varphi_t(C_t)$ produced by the smoothing of a node 
$p_E$ of $\underline{\varphi}_0$ corresponding to an edge $E$ of 
$\Gamma$ of weight $w(E)$.
By Theorem \ref{Thm: lifting md curves to log maps}, for every edge $E$
of $\Gamma$, there are $w(E)-1$ such nodes, all real elliptic. As $w(E)$ is odd, 
$w(E)-1$ is even, so the total factor contribution of these nodes to 
$\mathcal{W}(\varphi_t)$ is $1$.

Second, there are nodes of $\varphi_t(C_t)$ which are deformation of nodes of $\varphi_0(C_0)$ 
where the images by $\varphi_0$ of two irreducible components of $C_0$ intersect. 
All these nodes are real hyperbolic by Lemma \ref{lem:4-valent}
and so their factor contribution to $\mathcal{W}(\varphi_t)$ is $1$.

Third, there are nodes of $\varphi_t(C_t)$ which are deformation of nodes
of $\varphi_0(C_0)$ where the image by $\varphi_0$ of an irreducible
component $C_V$ of $C_0$ self-intersects. Let $V$ be the vertex of $\Gamma$
corresponding to $C_V$.
By Proposition \ref{Prop: all interior points give elliptic nodes}, all real nodes of
$\varphi_0(C_V)$ are elliptic. On the other hand, the total 
number of nodes of $\varphi_0(C_V)$ is the number $I_{\Delta_V}$ 
of integral points in the interior of the triangle $\Delta_V$
dual to $V$. 
As the total numbers of nodes and 
the number of real nodes have the same parity (because differing by the numbers of non-real nodes which come in complex conjugated pairs), 
we deduce that the factor contribution of the nodes of $\varphi_t(c_t)$ which are deformation of the nodes of 
$\varphi_0(C_V)$ is $(-1)^{I_{\Delta_V}}$.

In conclusion, we get $\mathcal{W}(\varphi_t)=\prod_V (-1)^{I_{\Delta_V}}$, 
which is equal to the contribution $\mathrm{Mult}_{\RR}(h)$ of $h \colon \Gamma 
\rightarrow M_{\R}$ to $\mathcal{W}^{\RR-trop}_{(g,\Delta,\mathbf{A},\mathbf{P})}$
by Definition \ref{Def:tropical W mult}.
\end{proof}

\subsection{Welschinger invariants}
\label{Sec:W invariants}
Let $(X,\omega, \iota_X)$ be a compact $4$-manifold endowed with
a symplectic form $\omega$ and an anti-symplectic involution $\iota_X$, that is, such
that $\iota^{*}_X \omega = - \omega$. 
Fix $d \in H_2(X;\ZZ)$ such that 
$c_1(X) \cdot d >0$, and let $x \subset X$ be a generic configuration of $c_1(X)\cdot d - 1$ distinct real 
($\iota_X$-fixed) points in $X$. For a generic almost complex structure $J$ tamed by $\omega$,
there are finitely many $J$-holomorphic rational curves in $X$ of degree $d$ and passing through $x$, all nodal. 
In particular, finitely many of these curves are real 
(that is $\iota_X$-invariant).
The count of $J$-holomorphic rational curves in $X$ of degree $d$ and passing through $x$ does not depend on $J$ and $x$: it is a genus $0$
degree $d$ Gromov-Witten invariant.
By contrast, the count of real $J$-holomorphic rational curves in $X$ of degree $d$ and passing through $x$ is not invariant in general under variation of 
$J$ and $x$.
Welschinger \cite{We,We1} understood how to construct an invariant by considering a signed count of real $J$-holomorphic rational curves in $X$.

The following definition  of Welschinger signs can be found in \cite[\S2.1]{We1}.
\begin{definition} 
\label{Def: Welschinger sign}
Let $C$ be a nodal real irreducible rational pseudo-holomorphic curve in a real rational symplectic $4$-manifold $X$ with homology class 
$d \in H_2(X,\Z)$. 
The \emph{Welschinger sign} of $C$ is defined as
\begin{equation}
    \label{Eq: mass}
    \mathcal{W}(C)\coloneqq (-1)^{m(C)}
\end{equation}
where $m(C)$, referred to as the \emph{mass of $C$}, is the total number of elliptic nodes of $C$.
\end{definition}
Note that by the adjunction formula, a nodal
rational curve of degree $d$ in a symplectic $4$-manifold has a total number of 
nodes equal to
\begin{equation}
    \label{delta}
  \delta=\frac{d \cdot d - c_1(X) \cdot d + 2}{2}  
\end{equation}
The following result is due to Welschinger.\\
\begin{theorem}(\cite[Thm.~$2.1$]{We})
For every integer $m$ ranging from $0$ to $\delta$, 
denote by $n_d(m)$ the total number of real $J$-holomorphic rational curves of mass $m$ in $X$ passing through $x$ and realizing the homology class $d$. 
Then, the number
\begin{equation}
    \label{Eq: Welschinger number}
    \mathcal{W}^{\R-symp}_{X,d}(x,J) = \sum_{m=0}^{\delta} (-1)^m n_d(m)
\end{equation}
is invariant; it neither depends on the choice of $J$ nor on the choice
of $x$ and we denote it by $\mathcal{W}^{\R-symp}_{X,d}$.
\end{theorem}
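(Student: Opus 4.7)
The plan is to prove invariance by a wall-crossing analysis in the space $\mathcal{P}$ of pairs $(J, x)$ where $J$ is an $\omega$-tamed $\iota_X$-anti-invariant almost complex structure and $x$ is an ordered tuple of $c_1(X) \cdot d - 1$ distinct real points of $X$. I would first verify that $\mathcal{P}$ is a connected Banach manifold and that the locus $\mathcal{P}^{\mathrm{reg}} \subset \mathcal{P}$ of \emph{regular} pairs---those for which the moduli space $\mathcal{M}(J, x)$ of real rational $J$-holomorphic curves of degree $d$ through $x$ consists of finitely many pairwise distinct simple nodal immersions---is open with complement of real codimension at least one. The result will then follow once $\mathcal{W}^{\R-symp}_{X,d}(J, x) = \sum_{C \in \mathcal{M}(J,x)} (-1)^{m(C)}$ is shown to be locally constant on $\mathcal{P}^{\mathrm{reg}}$.

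Given two regular pairs $(J_0, x_0)$ and $(J_1, x_1)$, I would connect them by a generic smooth path in $\mathcal{P}$ transversally meeting only the codimension-one strata of $\mathcal{P} \setminus \mathcal{P}^{\mathrm{reg}}$, and classify these strata. They fall into the following types of walls: (a) a pair of imaginary conjugate nodes of a real curve collides at a real point and splits into a pair of real nodes; (b) two real nodes of a real curve collide into a real tacnode; (c) a real curve develops an ordinary real cusp; and (d) one of the incidence points $x_i$ passes through a node of a curve. Walls of types (a), (b) and (d) do not change the cardinality of $\mathcal{M}(J, x)$ but only the real structure of the nodes of some $C$. At such a wall one verifies, via local real normal forms, that $(-1)^{m(C)}$ is preserved: conjugation symmetry forces the two real nodes produced in (a) to be of the same topological type (both elliptic or both hyperbolic), so $m(C)$ changes by $0$ or $2$; an analogous normal form argument on the real deformation of an $A_3$-tacnode handles (b); and (d) has no effect on $m$.

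The central and most delicate case is (c), where the cardinality of $\mathcal{M}(J, x)$ itself jumps by $\pm 2$: at such a wall two distinct real nodal curves limit onto a common cuspidal real curve and cancel (or appear together) on the other side. The main obstacle is then an explicit analysis of the real versal deformation of an ordinary real $A_2$-cusp, showing that the two vanishing (respectively appearing) curves carry masses of opposite parity, so that their signed contributions $(-1)^{m}$ cancel exactly. Once this local statement is established, summing the wall-crossing contributions along the chosen path yields $\mathcal{W}^{\R-symp}_{X,d}(J_0, x_0) = \mathcal{W}^{\R-symp}_{X,d}(J_1, x_1)$, and since $\mathcal{P}^{\mathrm{reg}}$ is path-connected this completes the proof of invariance.
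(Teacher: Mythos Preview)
The paper does not supply its own proof of this theorem: it is quoted verbatim from Welschinger's work with the citation \cite[Thm.~2.1]{We} and no further argument, serving only as background in \S\ref{Sec:W invariants}. There is therefore no proof in the paper to compare your attempt against.

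As a sketch of Welschinger's actual argument your outline has the right architecture, but it contains a substantive omission: you do not treat the wall where a real rational curve degenerates to a \emph{reducible} stable map. Gromov compactness tells you that along a generic one-parameter family in $\mathcal{P}$, a sequence of irreducible rational $J$-curves through $x$ may limit not only onto a cuspidal curve but also onto a stable map whose domain acquires a node, splitting the image into two (or more) rational components of degrees $d_1 + d_2 = d$. This is a genuine codimension-one phenomenon and is in fact the principal wall to be analysed; showing that the signed count with Welschinger signs is preserved across it---tracking how many real smoothings a real reducible configuration has and with what masses---is the technical core of \cite{We,We1}. Your list (a)--(d) consists entirely of events internal to a single irreducible curve and misses this case. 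A secondary remark: your wall (a) as phrased (a conjugate pair of nodes colliding at a real point) is already a tacnodal event and overlaps with (b); the cleaner codimension-one bookkeeping separates ``a node crosses the real locus'' from ``two nodes coalesce into a tacnode'', both of which indeed preserve $(-1)^m$ after a local normal-form check.
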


Let $X$ be a smooth toric del Pezzo surface and $d \in H_2(X,\Z)$ such that $c_1(X) \cdot d -1 >0$. 
Let $\mathcal{W}^{\R-symp}_{X,d}$ be
the corresponding Welschinger invariant.
Let 
$\Delta_v \colon M \setminus \{0\} \rightarrow \NN$ be the 
tropical degree defined by $\Delta_d(v)=d \cdot D_v$ 
if $v$ is the primitive generator of a ray of the fan of $X$
corresponding to the toric divisor $D_v$, and 
$\Delta_d(v)=0$ else.

\begin{corollary} \label{Cor:W_tropical} 
We have 
\[\mathcal{W}^{\R-symp}_{X,d}= \mathcal{W}^{\R-trop}_{(0,\Delta_d,\mathbf{A},\mathbf{P})} \,.\]
\end{corollary}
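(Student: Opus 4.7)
The plan is to reduce Corollary \ref{Cor:W_tropical} to Theorem \ref{Thm: log Welschinger equals tropical} by establishing
\[\mathcal{W}^{\R-symp}_{X,d} = \mathcal{W}^{\R-log}_{(0,\Delta_d,\mathbf{A},\mathbf{P}),t}\]
for $t \in (t_-,t_+)\setminus\{0\}$ sufficiently close to $0$ and $\mathbf{P}$ generic. Once this is shown, the corollary follows directly from the tropical correspondence theorem for log Welschinger numbers proved earlier.

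For this equality, I would first apply Welschinger's invariance theorem \cite{We,We1}, which allows us to compute $\mathcal{W}^{\R-symp}_{X,d}$ as a signed count of real rational $J$-holomorphic curves of class $d$ through any generic real configuration of $c_1(X)\cdot d - 1$ points, for any $\omega$-tame almost complex structure $J$. I would take $J$ to be the standard integrable complex structure on $X_t \simeq X$, and the point configuration to be $\mathbf{Z}_{\mathbf{A},\mathbf{P},t}$, where $\mathbf{A}$ is a zero-dimensional affine constraint general for $(0,\Delta_d)$ (so $d_j=1$ for each $j$ and $\ell = c_1(X)\cdot d - 1$) and $\mathbf{P}$ is generic in the real big torus orbit of $X$.

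The key point is then that, for generic $\mathbf{P}$, every real rational curve of degree $d$ through $\mathbf{Z}_{\mathbf{A},\mathbf{P},t}$ is torically transverse with tangency data exactly $\Delta_d$. This follows from a dimension count on the moduli space of rational curves of class $d$: the locus of curves failing to be torically transverse (because of higher order contact with the toric boundary, or passage through a toric stratum of codimension two) has strictly positive codimension, and generic point conditions avoid it. Under this matching, every symplectically counted curve is automatically a real stable log map contributing to $M^{\R-log}_{(0,\Delta_d,\mathbf{A},\mathbf{P}),t}$, and conversely every element of that moduli space has image a real rational curve of class $d$ passing through the configuration. By Proposition \ref{prop_only_nodes}, the image curve is nodal, so the symplectic Welschinger sign $(-1)^{m(\varphi)}$ and the log Welschinger sign $\mathcal{W}^{\log}(\varphi)$ coincide tautologically. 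Combining with Theorem \ref{Thm: log Welschinger equals tropical} then yields the corollary.

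The main obstacle is verifying that the specific configurations $\mathbf{Z}_{\mathbf{A},\mathbf{P},t}$ arising from our tropical/toric degeneration setup are simultaneously generic in two distinct senses: generic for the log geometric setup (i.e., that a polyhedral decomposition good for $(0,\Delta_d,\mathbf{A})$ exists and that the hypotheses of Theorem \ref{Thm: log Welschinger equals tropical} are met) and Welschinger-generic (i.e., lying off the real-codimension-one walls in the configuration space across which the signed count could jump). Both non-generic loci are proper closed real-analytic subsets of the space of configurations, so a simultaneous generic choice of $\mathbf{P}$ in the big torus orbit and $t$ in a small punctured neighbourhood of $0$ is possible; Welschinger's invariance theorem then ensures that the value of the signed count on our configuration indeed equals $\mathcal{W}^{\R-symp}_{X,d}$.
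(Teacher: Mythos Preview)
Your overall strategy is the same as the paper's: identify $\mathcal{W}^{\R-symp}_{X,d}$ with $\mathcal{W}^{\R-log}_{(0,\Delta_d,\mathbf{A},\mathbf{P}),t}$ and then invoke Theorem~\ref{Thm: log Welschinger equals tropical}. The difference lies in how you justify that identification. The paper does not argue directly; it simply cites \cite{IKS17} for the statement that on a toric del Pezzo surface the Welschinger invariant equals the Welschinger-signed count of real rational algebraic curves (for the standard integrable complex structure) through a generic real configuration, and that these curves are torically transverse.

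Your sketch has a genuine gap at exactly this point. Welschinger's invariance theorem \cite{We,We1} does \emph{not} assert that $\mathcal{W}^{\R-symp}_{X,d}$ can be computed with ``any $\omega$-tame almost complex structure $J$''; it asserts invariance for \emph{generic} $J$. Showing that the standard integrable structure on a del Pezzo surface lies in the good locus --- that for generic point configurations all relevant real rational curves are irreducible, immersed, nodal, and cut out transversally --- is a separate (and non-trivial) argument; this is precisely what \cite{IKS17} supplies and what the paper appeals to. Your dimension-count discussion addresses the toric transversality of the curves but not the regularity of the integrable $J$, so as written the reduction to the algebraic count is not complete. Citing \cite{IKS17} (or reproducing its transversality argument for del Pezzo surfaces) closes the gap and makes your proof coincide with the paper's.
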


\begin{proof}
By \cite{IKS17}, for a toric del Pezzo surface $X$, Welschinger invariants are counts
with Welschinger signs of real rational curves in $X$ for the standard integrable complex structure of $X$ 
and passing through a generic configuration of real points.
Furthermore every such curve is torically transverse.
Therefore, considering a toric degeneration as in the setting of Theorem \ref{Thm: log Welschinger equals tropical}, 
we have 
\[ \mathcal{W}^{\R-symp}_{X,d}
= \mathcal{W}^{\RR-log}_{(g,\Delta,\mathbf{A},\mathbf{P}),t} \]
for every $t \in \A^1(\R) \setminus \{0\} \simeq \R^{\times}$ general, and so the 
result follows from Theorem \ref{Thm: log Welschinger equals tropical}.
\end{proof}

\bibliographystyle{plain}
\bibliography{bibliography}

\end{document}